\newtheorem{theorem}{Theorem}[section]
\newtheorem{lemma}[theorem]{Lemma}
\newtheorem{proposition}[theorem]{Proposition}
\theoremstyle{definition}
\newtheorem{definition}[theorem]{Definition}
\numberwithin{equation}{section}
\def\cal{\mathcal}
\def\E{\mathbb{E}}
\def\Pr{\mathbb{P}}
\def \cal {\mathcal}
\def \A {\mathcal{A}_\infty(\mathcal{V})}
\def \und {\underline}
\def \Ed {\und{E}}
\newcounter{PropA}
\renewcommand{\thePropA}{\Alph{PropA}}
\newenvironment{classicalProp}[1]{\refstepcounter{PropA}   
 {\textbf{#1 {\thePropA}.   }}}{
  \ignorespacesafterend}
  \newcounter{ThA}
\renewcommand{\theThA}{\Alph{ThA}}
\newenvironment{classicalThm}[1]{\refstepcounter{ThA}   
 {\textbf{#1 {\theThA}.   }}}{
  \ignorespacesafterend}
\begin{document}


\frenchspacing

\textwidth=13.5cm
\textheight=23cm
\parindent=16pt
\oddsidemargin=-0.5cm
\evensidemargin=-0.5cm
\topmargin=-0.5cm


\title[Gaussian Behavior of Quadratic Irrationals]{Gaussian Behavior of Quadratic Irrationals}

\author[E. Cesaratto]{Eda Cesaratto}
\address{Instituto del Desarrollo Humano\\
Universidad Nacional de Gral. Sarmiento and National Council of Science and Technology (CONICET)\\ 
J.M. Guti\'errez 1150\\
(B1613GSX) Los Polvorines, Buenos Aires, Argentina}
\email{eda.cesaratto@campus.ungs.edu.ar}

\author[B. Vall\'ee]{Brigitte Vall\'ee}
\address{GREYC,  Universit\'e de Caen and CNRS\\
Sciences 3. Informatique, Universi\-t\'e de Caen, Bd Mar\'echal Juin\\ F-14032 Caen Cedex, France} 
\email{brigitte.vallee@unicaen.fr}

\date{}

\begin{abstract}
\baselineskip15.5pt

We study the probabilistic behavior of the continued fraction expansion of  a quadratic irrational number, when weighted  by  some
``additive'' cost. We prove  asymptotic Gaussian limit laws, with an optimal speed of convergence. We deal with the underlying dynamical system associated  with the Gauss map, and  its weighted periodic trajectories. We work with analytic combinatorics methods,  and mainly  bivariate Dirichlet generating functions;  we use various tools, from number theory (the Landau Theorem),  probability (the Quasi-Powers Theorem), or  dynamical  systems: our main object of study is the (weighted) transfer operator, that  we relate with the generating functions of interest.  The present  paper exhibits a strong  parallelism between periodic trajectories and rational trajectories. We indeed extend   the general framework which  has been previously described by  Baladi and Vall\'ee   for  rational trajectories.  However, our extension  to quadratic irrationals needs deeper  functional analysis properties. 
\end{abstract}

\subjclass[2020]{11K50, 11M36, 37D20, 60F05}
\keywords{Quadratic irrationals, Continued fractions, Dynamical Systems, 
Transfer operators, Analytic Combinatorics, Dirichlet series, Landau theorem, Central limit theorem}%

\maketitle


\baselineskip=17pt

\section{Introduction}

\subsection {General framework.}  \label{genfra}

This paper studies 
 particular quadratic irrationals (rqi in shorthand)  defined by purely periodic continued fraction expansions.  
 When a cost is defined on continued fraction expansions, notably  in an ``additive'' way,  each rqi  number  inherits  the cost defined on its period, and the paper focus on   such costs,   with a ``moderate growth''. Besides, we consider the usual notion of size that is  defined  in Number Theory contexts for a rqi number,  and  closely related to the fundamental unit of the associated quadratic field, and  to  the length of the corresponding closed  geodesic in the modular surface. 
 
  Our general framework is then  described as follows: we consider  the set ${\cal P}$ of   rqi numbers,   the subset ${\cal P}_N$  of rqi numbers with a size at most $N$, and the restriction $C_N$  to ${\cal P}_N$ of an additive  cost $C$ defined on ${\cal P}$. 
  We perform a probabilistic  analysis of the cost $C_N$ when the set ${\cal P}_N$ is endowed with the uniform probability. We prove in Theorem~\ref{Thm:mainCF} that the cost $C_N$ asymptotically follows a Gaussian law (when $N \to \infty$).  Moreover,  Theorem~\ref{Thm:mainCF} exhibits precise asymptotic expansions  for  its expectation and its variance.   Both are of order $\Theta(\log N)$. Theorem~\ref{Thm:mainconstants} provides  a computable description of the  constants that appear in the expansions. The main term of the expectation is already well studied, but,  to the best of our knowledge,  the estimates on the variance  are new, and this is the first study which exhibits   a limit  Gaussian law in such a context. 
  
  All these results are first proven within the unrestricted framework, when there do not exist ``constraints'' on  the continued fraction expansion of the rqi numbers.   But we also present  in Section~\ref{Sect:extensions}  similar results that hold for rqi numbers with bounded digits.  These results   combine the general methods of this paper with previous results that are specific to the constrained case,  described in   \cite{He3,Va98,CeVa}. 
  In the same section,  we also study an important (non additive) cost, namely the L\'evy constant, and obtain asymptotic expansions for its expectation and its variance.  Finally, we discuss the occurrence of local limit laws,  according to the type of costs (lattice or non-lattice).
 
  The central object in  our analysis is a bivariate  Dirichlet generating function $P(s,w)$ 
 where the complex parameter $s$ ``marks''  the size   and the complex parameter $w$ ``marks'' the cost.
  Our general methodology  is described along three main steps: 
  
{\em Step 1.} Relate the series $P(s, w)$ to   the weighted transfer operator ${\bf H}_{t,w}$,    associated with the underlying dynamical system defined by the Gauss map (with 
$t=s/2$).
 
{\em Step 2. } Transfer the analytic properties of the  operator  ${\bf H}_{t,w}$ (acting on convenient functional spaces) to obtain analytic properties of $P(s, w)$  about: \\
--    its  dominant  singularity (when $s$ is close to the real axis), \\
-- and its polynomial growth (when $s$ is far from the real axis).

{\em Step 3. } Extract asymptotics of coefficients   (with  the Landau Theorem) and exhibit  a quasi-powers behavior for the coefficients. With the Quasi-Powers Theorem, this entails the Gaussian law.
 
  There are strong similarities with the approach introduced by Baladi and Vall\'ee in \cite{BaVa}, 
  who perform  a  similar probabilistic analysis on  rationals, with an analogous series $Q(s, w)$. This  explains the strong similarity between the two results:    the Gaussian law obtained in the present paper, together with the precise asymptotic expansion of the expectation and the variance,  is of the same type as in the rational case.

Even though  Step 3  is  the same in the two analyses, 
 the present  rqi  framework   introduces  new  important issues in the first two steps.   In  Step 1, the periodicity phenomenon leads to several Dirichlet series -- not only the initial series $P(s, w)$--   which  take into account the 
 ``primitivity'' vs the ``non-primitivity''  of the period (as it is described in Section \ref{vargenfunct}).   In Step 2,  we  deal with two different functional spaces: 
 
-- 
when the parameter $s$ is close to the real axis,   we relate the bivariate series $P(s, w)$  to  {\em  traces} of  operators.  We then  need  spaces of analytic functions, where  such traces are   well-defined, as  it is shown in the works of  Mayer  \cite{Magreen}.   

-- when the parameter $s$ is far from the real axis,   the useful space is the space  ${\cal C}^1({\cal I})$ of functions on the unit interval, as in \cite{BaVa}.   Here, the rqi framework leads us to adapt  results  due to Pollicott and Sharp  to  obtain  polynomial growth (see  Theorem \ref{Thm:Far}).  

 Finally,   whereas the  Baladi-Vall\'ee analysis in \cite{BaVa}  uses a unique Dirichlet series $Q(s, w)$ and ``stays'' in the functional space ${\cal C}^1({\cal I})$,   the  more involved present  analysis deals with  various bivariate generating functions and two different functional spaces. Each generating function, and   each functional space plays a specific role.  The 
 occurrence of  the trace in the present study  explains the resemblances and the differences between constants which appear in the asymptotic estimates of the expectation and the variance, in the rqi case and in the rational case. 
  This is made precise in Section~\ref{Sect:constants}.

\subsection {Continued fractions and quadratic irrational numbers.} \label{S1.2}

 Every real number  
 $x \in ]0,1[$  admits  a continued fraction expansion of the form 
\[x=
\cfrac{1}{m_1+
\cfrac{1}{m_2+
\cfrac{1}{m_3+\dotsb
}}}
\]
denoted as  $x=[m_1,m_2, \dots]$. Here,  the coefficients $m_k$ are positive integers known as partial quotients and also called digits. Rational numbers have a finite 
continued fraction expansion. When the number $x$ is irrational, the continued fraction expansion is infinite and $x$ is  completely determined by the  whole sequence $(m_k)$ of its digits.

 In 1770, Lagrange proved  that a number is  quadratic irrational  if and only if its continued fraction expansion is eventually periodic.
We are mainly interested here in  particular quadratic irrational numbers,    whose continued fraction expansion is  purely  periodic.  Such numbers are  called 
\emph{reduced}    ({rqi}  in shorthand).  
This paper is devoted to 
their study: in particular, we describe the probabilistic behavior of such numbers via 
``costs'' that are defined   via their  continued fraction expansion.

{\bf \em Size.}  If $p(x)$ is the length of  the  smallest period  of a  rqi number $x$,  then  the number $x$  is  defined by the relation $[m_1, m_2, \ldots, m_k+x ] = x$, with $k = p(x)$ and
is denoted by $\langle m_1, m_2, \ldots, m_p \rangle$. This relation  rewrites as a quadratic polynomial  
equation of the form 
$A x^2+ Bx +C = 0$ with a triple 
$(A,  B, C)$  of relatively prime integers. Then $x$ belongs to  the quadratic field ${\mathbb Q}(\sqrt  \Delta)$ where $\Delta= B^2-4 AC$ is the discriminant of the polynomial, and one associates with $x$ the fundamental unit  $\epsilon (x)>1$ of this quadratic field. 
This fundamental unit  $\epsilon (x)$  plays, for  rqi numbers,   the same  role   as the denominator for rationals, and it defines a natural notion of size.

In this paper,  we consider the set  ${\cal P}$ of  rqi numbers  $x$, together with its  
finite\footnote{The finiteness  of ${\cal P}_N$ is  proven at the end of  Section 2.3.} subsets ${\cal P}_N$ formed with rqi numbers   of size  $\epsilon (x)$ at most $N$, 
defined as \begin{equation}\label{Eq:setP}
  \mathcal{P} := \{x\in \mathcal{I} \mid x \hbox{ is  a rqi number }\}, \qquad 
 \mathcal{P}_N :=\{x\in  \mathcal{P}\mid \ \epsilon(x)\le N\}  \, .
 \end{equation}

{\bf \em Cost on digits.} Any  non zero map $c: \mathbb N\rightarrow \mathbb{R}_{\ge 0}$ is  called a \emph{digit-cost}.  With a given digit-cost $c$,  we associate the \emph{cost}  $C(x)$ of a rqi  number $x$ defined  as the ``total cost'' on the (smallest) period of $x$, namely    
\begin{equation}
C(x):= c(m_1)+\dots +c(m_p),\quad \hbox{ if } x=\langle m_1,m_2,  \dots m_p \rangle \
\end{equation}
This defines a  non zero cost $C:  {\cal P} \rightarrow \mathbb{R}_{\ge 0}$. Moreover, 
we restrict ourselves to  digit-costs $c$   that satisfy   $c(m)=O(\log m)$ and  are said to be of {\em moderate growth}\footnote{The reasons for such a restriction are explained at the end of  Section \ref{Sect:33}. }.

There are  three main  examples of   digit-costs $c$ that can be treated by our methods, 
namely: the unit cost    $c =  1$;   the characteristic function  $\chi_n$ of  a  given digit $n$;  
the   length  $\ell$ of the  binary expansion of integers, defined as    $ \ell (m) :=   \lfloor \log_2 m \rfloor +1$.   The   associated costs $C$ are  natural and interesting. 
 For $c  = 1$, $C(x)$ coincides with the period-length $p(x)$.  For $c =  \chi_n$,  
 $C(x)$ equals  the number of digits equal to $n$ in the  smallest period of $x$. 
 Finally,   for $c  =   \ell$,  $C(x)$  is   the number of binary digits needed to store the rqi $x$.

 {\bf \em  Interpretation in the hyperbolic plane.}
 We  now describe the classical geometric  interpretation   
of  the size $\epsilon (x)$ and the digits $m_j(x)$ of a rqi number $x$ in  the hyperbolic plane.

\smallskip 
 We  first recall the coding of a curve in the hyperbolic plane ${\mathbb H}:=
 \{ z= x +iy \in {\mathbb C} \mid y >0 \}$.  
 When  ${\mathbb H}$  is endowed with  the usual metric $ds = |dz|/y$,  
 the geodesics are  the semi-circles centered on the real axis together with the vertical lines.
  Consider the hyperbolic triangle   $\Delta $ with the three cusps $i \infty$, $0$ and 1, represented in Fig.  \ref{fi1}. 
 Together with  all the triangles $ h(\Delta )$ which are the transforms of $\Delta$
 with $h \in SL_2({\mathbb Z})$, it  defines the   Farey tessellation  of ${\mathbb H}$ represented in Fig. \ref{fi2}.
 When a ``good''  oriented curve  $\gamma$  of ${\mathbb H}$ goes through such a triangle $ h(\Delta)$ in a general position, 
 there are two possibilities  for the position of   the curve  $h(\Delta) \cap \gamma$  with respect to the  three 
 cusps $h(i \infty)$, $h(0)$ and $h(1)$: if there is only one cusp on the right of the curve (and then two cusps on the left), 
  we code the portion of the curve  by $R$;  otherwise, we code it by $L$ (see Fig. \ref{fi1}). 

 \begin{figure}

  \begin{tikzpicture}
  \begin{scope}[scale=1.5]

 \draw[dashed,->] (-.5,0)--(1.5,0); 
  \draw[fill] (0,0) circle (.2pt);
  \node[below] at (0,0) {$0$};

   \coordinate (x) at (0,2);
\coordinate[] (y) at (1/2,2.2);
\coordinate (w) at (1,2);;

\coordinate[label=below:{$1$}] (z) at (1,0);

\fill[black!10] (0,0)--(x) .. controls +(up:.12cm) and +(left:.1cm) ..  (y) .. controls+(right:.1cm) and +(up:.12cm) .. (w)-- (1,0)
 arc (0:180:.5)-- cycle;
\node at (.5,1.3) {$\Delta$}; 

 \draw[->] (0,0)--(0,2); 

 \draw (1,0)--(1,2);;
    
  \draw (1,0) arc (0:180:.5);

\end{scope}
\begin{scope}[xshift=4cm]

 \begin{scope}[scale=3]
   \draw (1,0) arc (0:180:.5)-- (0,0)arc (180:0:.25)--(1/2,0)arc (180:0:.25)--cycle;
   \end{scope}
 \draw (1,0) arc (0:40:3) node [right,midway]  {$L$} ;
  \draw[-<] (1,0)arc (0:20:3) ;
 
 \end{scope}
 
\begin{scope}[xshift=7.5cm]

 \begin{scope}[scale=3]

   \draw (1,0) arc (0:180:.5)-- (0,0)arc (180:0:.25)--(1/2,0)arc (180:0:.25)--cycle;
   \end{scope}
  \draw (2,0)arc (0:40:3) node [right,midway]  {$R$} ;
    \draw[-<] (2,0)arc (0:20:3) ;

 \end{scope}
 
\end{tikzpicture}

\caption{On the left, the fundamental tile  $\Delta$ of the Farey tessellation. 
On the right,  the labeling of a  portion of a curve that intersects a triangle.}
\label{fi1}
\end{figure}

\smallskip
We  now return to a rqi $x$, and  consider its minimal even period,  of length $e(x)$,  with  $e(x) := p(x)$ if $p(x)$ is even, and length $e(x) := 2 p(x)$ if $p(x)$ is odd.  
First, if $x \in ]0, 1[ $ is a rqi number, its conjugate $\bar x$ satisfies $\bar x <-1$ and $$  x= \langle m_1, m_2, \ldots, m_e\rangle \quad  \Longrightarrow  \quad  -1/\bar x = \langle m_e, m_{e-1}, \ldots, m_2, m_1\rangle \, . $$

The geodesic $\gamma(x)$  which links   $\bar x$ to $ x$ (with this orientation)  
intersects the imaginary axis at $i t(x)$. This defines   two oriented curves, the curve $\gamma_+(x)$ that links $i t(x)$ to $x$, and the curve $\gamma_-(x)$ that links $it(x)$ to $\bar x$. As Series \cite{Se85}  and Pollicott  \cite{Po86} show,  the coding of the  geodesics 
``copies'' the continued expansions of the  associated rqi numbers,  and 
the codings of the  two curves  $\gamma_+(x)$ and  $\gamma_+(-1/ \bar x)$ are  the  respective periodic infinite words 
$$\left( L^{m_1} R^{m_2}L^{m_3}\ldots R^{m_e}\right)^{\mathbb N},   \qquad \left(L^{m_e}  R^{m_{e-1}} L^{m_{e-2}}\ldots R^{m_1} \right)^{\mathbb N}\, .$$   The  close connection  that relates  the codings of  the two curves $\gamma_+(-1/ \bar x)$ and $- \gamma_-(x)$  finally entails   a coding for the concatenation $\gamma(x)$ of the two curves $(- \gamma_-(x)) $ and $\gamma_+(x)$ as the bi-infinite  periodic word  of period   $R^{m_1} L^{m_2}R^{m_3}\ldots L^{m_e}$
(see Fig. \ref{fi2}).\\
 Moreover,  the length of  the ``primitive'' part of the geodesic  $\gamma(x)$ -- associated with
 the coding  $L^{m_1} R^{m_2}L^{m_3}\ldots R^{m_e}$ of the minimal even period -- equals
 2 $\log \epsilon (x)$, where $\epsilon (x)$ is  the fundamental unit $\epsilon (x)$  associated with the rqi $x$. 
 
 \smallskip 
  These interpretations of the size $\epsilon (x)$ and the digits $m_j(x)$ of a rqi $x$  on its geodesic $\gamma(x)$ 
  provide  a geometric  framework  for  the whole present study. 
  
\begin{figure}[h]

\begin{tikzpicture}[scale=1.8]
 \draw[dashed] (1,0)--(4,0); 
  \node[below] at (3,0) {$0$};
 \draw[-] (3,0)--(3,2); 
 \draw[-] (4,0)--(4,2);
 \draw[-] (2,0)--(2,2);
 \draw[-] (1,0)--(1,2);
  
   \coordinate (x) at (0,2);
   \coordinate (x1) at (1,2);
\coordinate[] (y) at (1/2,2.2);
\coordinate[] (y1) at (3/2,2.2);
\coordinate (w) at (1,2);
\coordinate (w1) at (2,2);

\coordinate[label=right:$it(x)$] (z) at (3.2,1.1);
\path [->] (z) edge (3.01,1);

\begin{scope}[xshift=3cm]
\coordinate (a) at ({1/(1+1/(1+1/(1+1/2)))},0);
\node[below] at (a) {$x$};

\draw[thick] (a) arc (0:180:{1/2+1/(1+1/(1+1/2))}) node[below]{$\overline{x}$};
\end{scope}

\node at (3.5,1.5) {$\Delta$};

\begin{scope}[xshift=2cm]
\draw (.5,0) arc (0:180:1/12);
\draw (1/3,0) arc (0:180:1/6);
\draw (1/3,0) arc (0:180:1/24);
\draw (1/4,0) arc (0:180:1/8);
\draw (1/2,0) arc (0:180:1/20);
\draw (2/5,0) arc (0:180:1/30);
\draw (1,0) arc (0:180:.5)-- (0,0)arc (180:0:.25)--(1/2,0)arc (180:0:.25);

 \draw (1,0) arc (0:180:1/6);
  \draw (2/3,0) arc (0:180:1/12);
\draw (1,0) arc (0:180:1/8);
\draw (3/4,0) arc (0:180:1/24);
\draw (2/3,0) arc (0:180:1/30);
\draw (3/5,0) arc (0:180:1/20);

\end{scope}

\begin{scope}[xshift=3cm]
\draw (.5,0) arc (0:180:1/12);
\draw (1/3,0) arc (0:180:1/6);
\draw (1/3,0) arc (0:180:1/24);
\draw (1/4,0) arc (0:180:1/8);
\draw (1/2,0) arc (0:180:1/20);
\draw (2/5,0) arc (0:180:1/30);
\draw (1,0) arc (0:180:.5)-- (0,0)arc (180:0:.25)--(1/2,0)arc (180:0:.25);

 \draw (1,0) arc (0:180:1/6);
  \draw (2/3,0) arc (0:180:1/12);
\draw (1,0) arc (0:180:1/8);
\draw (3/4,0) arc (0:180:1/24);
\draw (2/3,0) arc (0:180:1/30);
\draw (3/5,0) arc (0:180:1/20);

\end{scope}

\begin{scope}[xshift=1cm]
\draw (.5,0) arc (0:180:1/12);
\draw (1/3,0) arc (0:180:1/6);
\draw (1/3,0) arc (0:180:1/24);
\draw (1/4,0) arc (0:180:1/8);
\draw (1/2,0) arc (0:180:1/20);
\draw (2/5,0) arc (0:180:1/30);
\draw (1,0) arc (0:180:.5)-- (0,0)arc (180:0:.25)--(1/2,0)arc (180:0:.25);

 \draw (1,0) arc (0:180:1/6);
  \draw (2/3,0) arc (0:180:1/12);
\draw (1,0) arc (0:180:1/8);
\draw (3/4,0) arc (0:180:1/24);
\draw (2/3,0) arc (0:180:1/30);
\draw (3/5,0) arc (0:180:1/20);
 
\end{scope}  

\begin{scope}[xshift=5cm,scale=2]
\coordinate (a) at ({1/(1+1/(1+1/(1+1/2)))},0);
\node[below] at (a) {$x$};
\draw (a) arc (0:69:{1/2+1/(1+1/(1+1/2))});
\draw (a) arc (0:15:{1/2+1/(1+1/(1+1/2))}) node [right,midway] {$L$};
\draw[-<] (a) arc (0:15:{1/2+1/(1+1/(1+1/2))})arc(15:25:{1/2+1/(1+1/(1+1/2))})node [right,midway] {$R$};
\draw (a) arc (0:15:{1/2+1/(1+1/(1+1/2))})arc(15:30:{1/2+1/(1+1/(1+1/2))})arc(30:60:{1/2+1/(1+1/(1+1/2))})node [right,midway] {$L$};

\coordinate[label=right:$it(x)$] (z) at (0.2,1.1);
\path [->] (z) edge (0.01,1);

 \draw[dashed] (0,0)--(1,0); 
  \draw[fill] (0,0) circle (.2pt);
  \node[below] at (0,0) {$0$};
  \node[below] at (1,0) {$1$};
 \draw[-] (0,0)--(0,1.2); 
 \draw[-] (1,0)--(1,1.2);
\draw (.5,0) arc (0:180:1/12);
\draw (1/3,0) arc (0:180:1/6);
\draw (1/3,0) arc (0:180:1/24);
\draw (1/4,0) arc (0:180:1/8);
\draw (1/2,0) arc (0:180:1/20);
\draw (2/5,0) arc (0:180:1/30);
\draw (1,0) arc (0:180:.5)-- (0,0)arc (180:0:.25)--(1/2,0)arc (180:0:.25);

 \draw (1,0) arc (0:180:1/6);
  \draw (2/3,0) arc (0:180:1/12);
\draw (1,0) arc (0:180:1/8);
\draw (3/4,0) arc (0:180:1/24);
\draw (2/3,0) arc (0:180:1/30);
\draw (3/5,0) arc (0:180:1/20);

\end{scope}

\end{tikzpicture}

\caption{Farey tessellation and continued fractions.  Here,  we consider
$x:= \phi^{-1}$ (here $\phi$ is the golden ratio)  and the geodesic which links $x$ to its conjugate $- \phi$ as it goes through  the Farey tessellation. On the right, the coding of $x$ in terms of $R$ and $L$.    }
\label{fi2}
\end{figure}


\subsection{Statement of the main results.}
We study the  probabilistic behavior of the  continued fraction expansion of  a rqi number, with respect to some cost $C$.  When  the set
$\mathcal{P}_N$ is endowed with the uniform probability $\mathbb P_N$, the restriction   $C_N$    of  the cost $C$ to  $\mathcal{P}_N$ becomes a random variable.  Our  first main result  exhibits the asymptotic Gaussian behavior for the sequence $(C_N)$.  

 \begin{theorem}\label{Thm:mainCF}
Consider the set ${\cal P}$ of the reduced quadratic irrational numbers $x$, endowed with the size $\epsilon$. With  a 
non zero cost  of moderate growth,  associate the additive cost $C$  on the set ${\cal P}$. 
Then, the following holds on the set ${\cal P}_N$ of  reduced quadratic irrational numbers $x$ with $\epsilon(x) \le N$,  for $N \to \infty$:  

\begin{itemize} 

  \item[$(i)$] 
There  are  four constants, two dominant constants   $\mu(c)$ and  $\nu(c)$ that are strictly positive, and two subdominant constants, $\mu_1(c)$ and $\nu_1(c)$,  for which the  expectation $\E_N[C]$ and the variance $ {\mathbb V}_N[C]$  satisfy the following  asymptotic estimates,    for some  $\beta >0$, 
\begin{equation*}
 \E_N[C]= \mu(c) \log N + \mu_1(c) +O(N^{-\beta}), \ 
 \end{equation*}
\begin{equation*}
 {\mathbb V}_N[C]=\nu(c) \log N + \nu_1(c) +O(N^{-\beta}). 
\end{equation*}

\smallskip 
\item[$(ii)$] Moreover,  the  distribution of  $C$ 
is asymptotically Gaussian,  
$$\Pr _N\left[ x  \mid \frac {C(x) - \mu(c) \log N} { \sqrt {\nu(c)\log N} } \le t \right]= \frac 1 { \sqrt {2\pi}} \int_{-\infty} ^t e^{-u^2 /2} du + O\left( \frac 1 {\sqrt {\log N}} \right)\,.$$

 \end{itemize}
 \end{theorem}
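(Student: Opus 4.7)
The plan is to encode the distribution of $C$ on $\mathcal{P}_N$ through a bivariate Dirichlet series, reduce its analysis to the spectral theory of a weighted transfer operator of the Gauss map, and then invoke a Tauberian step followed by a Quasi--Powers argument.

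First I would introduce the bivariate Dirichlet series
$$S(s,w) := \sum_{x \in \mathcal{P}} \frac{e^{w\,C(x)}}{\epsilon(x)^{2s}}$$
and express it in terms of the weighted transfer operator
$$H_{s,w}[f](x) := \sum_{m\ge 1} \frac{e^{w\,c(m)}}{(m+x)^{2s}}\, f\!\left(\frac{1}{m+x}\right).$$
The link comes from the fact that an rqi with smallest period $p$ is a fixed point of the $p$-th iterate of the Gauss map, with $|(T^p)'(x)|$ governed by $\epsilon(x)^2$. A trace formula of Mayer type, combined with a M\"obius inversion to single out the smallest period, then exhibits $S(s,w)$ essentially as $-\log\det(I - H_{s,w})$ up to harmless corrections. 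Consequently the dominant singularity of $s\mapsto S(s,w)$ coincides with the real value $s=\sigma(w)$ at which the dominant eigenvalue $\lambda(s,w)$ of $H_{s,w}$ equals $1$.

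Second I would run the spectral analysis of $H_{s,w}$ on a suitable Hardy-type space of holomorphic functions on a disc containing $\mathcal{I}$, on which the operator is nuclear. For $(s,w)$ close to $(1,0)$, $H_{s,w}$ admits a simple dominant eigenvalue $\lambda(s,w)$ depending analytically on $(s,w)$, with a spectral gap; the implicit relation $\lambda(\sigma(w),w)=1$ then defines an analytic branch $\sigma(w)$ with $\sigma(0)=1$, and analytic perturbation yields in particular $\sigma'(0)$ and $\sigma''(0)$. To push things further, one needs a Dolgopyat-type uniform resolvent bound for $(I-H_{s,w})^{-1}$ along vertical lines $\mathrm{Re}(s)=\sigma(w)-\beta$, guaranteeing analytic continuation of $S(\cdot,w)$ past its dominant singularity with only polynomial growth.

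Third, applying a quantitative Perron/Landau-type extraction on $S(\cdot,w)$ would then give, uniformly in $w$ in a complex neighborhood of $0$,
$$M_N(w) := \sum_{x\in\mathcal{P}_N} e^{w\,C(x)} \;=\; A(w)\, N^{2\sigma(w)}\,\bigl(1+O(N^{-\beta})\bigr),$$
with $A(w)$ analytic. The ratio $M_N(w)/M_N(0)$ is precisely the moment generating function of $C_N$, and the above expansion fits Hwang's Quasi--Powers scheme in the variable $\log N$. This produces the Gaussian limit law of part $(ii)$ with $\mu(c)=2\sigma'(0)$, $\nu(c)=2\sigma''(0)$ and Berry--Esseen speed $O(1/\sqrt{\log N})$; differentiating the uniform expansion at $w=0$ delivers the asymptotics of $\E_N[C]$ and $\mathbb{V}_N[C]$ in part $(i)$, together with the subdominant constants $\mu_1(c),\nu_1(c)$ arising from $A(w)$ and the error terms.

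The main obstacle is the third step, more precisely the passage from spectral data to a power-law error $O(N^{-\beta})$ \emph{uniform in} $w$. For $w=0$ the coefficients of $S(\cdot,0)$ are positive and classical Tauberian machinery applies, but as soon as $w$ becomes complex one must resort to Perron's formula and therefore control $S(s,w)$ on long vertical contours; combined with the fact that $S(s,w)$ enters as a logarithm of a Fredholm determinant rather than as a Dirichlet series with an isolated simple pole, this is what forces the ``deeper functional analysis framework'' mentioned in the abstract and makes the quadratic irrational case genuinely harder than the rational-trajectory case of Baladi--Vall\'ee.
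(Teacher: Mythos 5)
Your overall skeleton (bivariate Dirichlet series, spectral analysis of the weighted transfer operator, a uniform Perron/Landau extraction, then Hwang's Quasi-Powers Theorem) is indeed the paper's strategy, but two of your central identifications are wrong or leave a real gap. First, the series is not ``essentially $-\log\det(I-\mathbf{H}_{s,w})$'': the log-determinant carries the factors $1/k$ in $\sum_k \frac{1}{k}\mathrm{Tr}\,\mathbf{H}^k_{s,w}$ and would produce a \emph{logarithmic} singularity at $s=\sigma(w)$, which is incompatible with the expansion $M_N(w)=A(w)N^{2\sigma(w)}(1+O(N^{-\beta}))$ that you extract afterwards -- your proposal is internally inconsistent on this point. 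Since every $h\in\mathcal{H}^+$ (every cyclic shift, every power) is counted exactly once in the series, the correct object is a sum of traces of iterates \emph{without} the $1/k$, i.e.\ the trace of a quasi-inverse, which has a simple pole; equivalently it is the $u$-logarithmic derivative of the Fredholm determinant at $u=1$, not its logarithm. Moreover the trace formula reads $\mathrm{Tr}\,\mathbf{H}_{[h],s,w}=e^{wc(h)}\alpha(h)^{2s}/(1-(-1)^{|h|}\alpha(h)^2)$, and $\epsilon(h)$ equals $\alpha(h)^{-1}$ or $\alpha(h)^{-2}$ according to the parity of $|h|$, so one is forced into the parity splitting $Y_k(2s,w)=\mathrm{Tr}\,\mathbf{H}^k_{s,w}-(-1)^k\mathrm{Tr}\,\mathbf{H}^k_{s+1,w}$ and the even/odd quasi-inverses ${\bf E}_{t,w}$, ${\bf O}_{t,w}$; your proposal ignores this. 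Also, no M\"obius inversion is needed to isolate primitivity: it suffices to check that the non-primitive part $Z(s,w)-P(s,w)$ is analytic and bounded in a wider strip, which is how the paper proceeds.

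Second, and this is the main gap: the passage from Dolgopyat-type bounds to polynomial growth of the Dirichlet series on long vertical lines does not go through as you state it. The quantity $Z_k(s,w)=\sum_{h\in\mathcal{H}^k}e^{wc(h)}|h'(x_h)|^{t}$ evaluates each component operator at the $h$-dependent fixed point $x_h$, so it is \emph{not} of the form $\mathbf{H}^k_{t,w}[f](x_0)$ for one fixed pair $(f,x_0)$, and a resolvent bound for $(I-\mathbf{H}_{s,w})^{-1}$ does not directly control it. One needs the Ruelle/Pollicott--Sharp telescoping $Y_k=T_k+\sum_{m=2}^{k}\Delta_m$, extended here to a weighted operator with infinitely many branches, to transfer the estimates $\|\mathbf{H}^k_{t,w}\|_{(1,\tau)}\le M\,|\tau|^{\xi}\gamma^k$ to $Z_k(s,w)$. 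Related to this, the Dolgopyat--Baladi--Vall\'ee bounds are proved on $\mathcal{C}^1(\mathcal{I})$, while the trace-class theory (Grothendieck/Mayer) lives on the analytic space $\mathcal{A}_\infty(\mathcal{V})$; your single ``Hardy-type space'' cannot serve both regimes, and juggling the two spaces (traces near the real axis, Dolgopyat far from it) is precisely the extra functional-analytic layer of the paper. Once these two points are repaired, the remaining steps run essentially as you describe: the uniform strong Landau theorem gives the quasi-powers form, and Quasi-Powers yields Theorem 1 with $U(w)=2(\sigma(w)-1)$ and $U''(0)\neq 0$ imported from Baladi--Vall\'ee.
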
  
 Remark that the speed of convergence  towards the Gaussian law  is optimal,  of the same order   $(\log N)^{-1/2}$ as in the usual Central Limit Theorem.

\subsection{Main constants of the analysis.}  The next theorem 
gives a precise description of the four constants  that appear in Theorem \ref{Thm:mainCF}.  We gather here, 
various results of different types. The first two items $(i)$ and $(ii)$ are classical; items $(iii)$ and $(iv)$ show a strong analogy with results in \cite{BaVa}.  
 
 \begin{theorem}\label{Thm:mainconstants}   Consider the  digit-cost $c$ of moderate growth associated with the additive cost $C$.
 The following holds:  
 \begin{itemize} 
 \item [$(i)$]  Consider the  dynamical system  related to  the Gauss map that underlies the continued fraction expansion, and    the (weighted) transfer operator ${\bf H}_{t, w}$
  which  associates with   a function $f$ the function  ${\bf H}_{t, w} [f]$,  defined as 
 \begin{equation} \label {debH} {\bf H}_{t, w} [f] (x)  := \sum_{m \ge 1} \frac { e^{ w\, c(m)}   }{(m+x)^{2t} } f\left( \frac 1 {m+x}\right)\, . 
\end{equation}
 When it acts on the functional space $\A$ defined in \eqref{calA},    the operator ${\bf H}_{t, w}$ possesses,  for $(t, w)$ close to $(1, 0)$, a dominant eigenvalue
denoted as $\lambda(t, w)$. 

\smallskip
\item[$(ii)$]  
The   two partial derivatives of  the map $(t, w) \mapsto \lambda(t, w)$  at  $(t, w) = (1, 0)$ are related to  the entropy ${\cal E}$ of the system together  with the mean value $\E[c]$ of the cost with respect to the Gauss density $\psi : x \mapsto (1/\log 2) (1/(1+x))$, 
$$ -\lambda_t'(1,0)  = {\cal E} = \frac {\pi^2} {6 \log 2}, \quad  \lambda_w'(1,0)  =   \E[c]  = \sum_{m \ge 1} c(m) \int_{1/m}^{1/(m+1)}\psi(x) dx   \, .  $$

\smallskip
\item[$(iii)$]  Consider the map $w \mapsto \sigma(w)$ that is  defined from the  equation $\lambda(\sigma(w), w) = 1$ and satisfies 
$\sigma(0)  = 1$, together with   the two mappings $$U(w)  :=  2(\sigma(w) -1),  \qquad V(w)  :=  \log \left(  \frac {\lambda_t'(1,0)}{\lambda_t'(\sigma(w),w)}\right) - \log \sigma(w)\, .$$
 The  constants  of Theorem \ref{Thm:mainCF} are expressed  with the first two derivatives of $U$ and $V$, namely, 
$$\mu(c) = U'(0) =  \frac {2}{\cal E} \,  { \E[c]} , 
\quad \nu(c) = U''(0), \quad \mu_1(c) = V'(0), \quad \nu_1(c) = V''(0).$$

\smallskip
\item[$(iv)$] The  dominant constants $\mu(c), \nu(c)$  coincide with their analogs that occur in the analysis of rational trajectories described in \cite{BaVa}.

\end{itemize}
\end{theorem}

  We first  observe  
   the equality  $  \mu(c) =  \left( {2}/{\cal E}\right)  { \E[c]}$   that  entails the estimate  $\E_N[C] \sim \E_N[p] \cdot \E[c]$. This estimate  may be compared to the ergodic relation  that holds on almost any generic  trajectories. This  leads to the following  (informal) statement ``The periodic trajectories behave on average as the generic trajectories behave almost everywhere'', which holds in a very general  context of ``dynamical analysis''.

   All the constants that appear in Theorem \ref{Thm:mainCF}  are computable. In particular,  the constants $\E[c]$  related to the three  costs of interest are
$$  \E [c] = 1, \ \ \E [\chi_n] =  \frac 1 {\log 2} \log \left[ \frac {(n+1) ^2} {n(n+2)} \right], \ \  \E[\ell] = \frac 1 {\log 2}   \prod _{i = 1} ^\infty \log \left( 1 + \frac 1 {2^k}\right).$$
 In many natural situations, the mean value $\E [c]$  admits an  explicit expression, and this is thus the same for the  constant $\mu(c)$ itself.  
 
 The situation is completely different for the other three constants, as there are no longer close expressions for the  second (and the third)  derivatives of the mapping $(t, w) \mapsto \lambda(t, w)$ at $(1, 0)$.  However, these derivatives are computable in polynomial time with  methods  that  were first described  in  \cite{He3} and  \cite{DFV},  then further developed in \cite{Lh1}. All these methods are based on a very natural idea: use the  truncated Taylor expansion of analytic functions to approximate the operator  by matrices (that  now act on polynomials);  compute the spectrum of the matrices with  classical tools of linear algebra (in finite dimensions). This   should   provide approximates of the   (upper part of) the spectrum of the operator (and this is proven to be true).

\subsection {Description of the methods. }  The present work deals with methods that come from analytic combinatorics, in the same lines as  in the book of Flajolet and Sedgewick \cite{FlSe}. We associate with  the  set ${\cal P} $,  endowed with the size $\epsilon$ and the cost $C$, a  bivariate generating function;  in the present number theory framework, it is of Dirichlet type, 
\begin{equation*}\label{Eq:defP}P(s,w) := \sum_{x\in \mathcal P}\exp(w C(x))\, \epsilon(x)^{-s}\ .
 \end{equation*} 
 Cumulative costs over the set 
${\cal P}_N$ are  written as sums of coefficients,   
and we notably define  
 \begin{equation*}
 S^{[C]}_w(N) := \sum_{\substack{x \in {\cal P} \\{ \epsilon(x)\le N} }}\exp(w C(x)) \, ;  \end{equation*}
the moment generating of the cost  $C$ on ${\cal P}_N$ is  then written as  the quotient of such sums,  
 \begin{equation*} \mathbb{E}_N[\exp(wC)]   =  \frac {S^{[C]}_w(N)}{S^{[C]}_0(N)} \, .
 \end{equation*}
 The distribution of  the cost $C$ on ${\cal P}_N$  is indeed studied here with this moment generating function: we use the Quasi-Powers Theorem \cite{Hw96}   that states that  an asymptotic Gaussian law for  $C$  on ${\cal P}_N$ holds as soon as the moment generating function has a ``uniform quasi-powers'' form (when $w$ is a complex  number near 0). \\
 We thus  need  precise  asymptotic estimates on the sums $S^{[C]}_w(N)$ of the series $P(s, w)$; analytic combinatorics principles relate them  to the analytic properties of the series $P(s, w)$. The strong tool which operates this transfer  here, between  analytic properties of $P(s, w)$ and asymptotic properties of its coefficients,  is the Landau Theorem proven by Landau in  \cite{Lan}. The analytical properties  that are needed  for $s\mapsto P(s, w)$  are of two types: a precise knowledge of its  singularities (here, its poles, located near the real axis)  together with  a good knowledge of its  behavior for $|\Im s| \to \infty$ (here, polynomial growth  for $|\Im s| \to \infty$). 
 
 We have then  to study the series $P(s, w)$. This will be done  with   ``dynamical  analysis'', which is already used in many works, in particular in the 
 analog rational study (see for instance   in \cite{BaVa,Va8,Va00}). 
 The idea  is  to use the underlying dynamical system, associated with the continued fraction transformation (the Gauss map), and relate the series $P(s, w)$ to the (weighted) transfer operator of the dynamical system described in \eqref{debH}.  
It is then needed to study the transfer operator itself,  for $w$  close to 0, and various values of $s$.

 Here, as already mentioned in Section \ref{genfra}, we   need  two different functional spaces: the first one, useful when $s$ is close to the real axis, is the  set  $\A$ of analytic functions (defined in \eqref{calA}) where the transfer operator admits dominant spectral properties and  a  well-defined trace (in the sense of Grothendieck); this study provides  a precise knowledge of the singularities  of  $s \mapsto P(s, w)$  located near the real axis;  the second functional space,  useful for  $|\Im s| \to \infty$, is the set of ${\cal C}^1$ functions,  where the transfer operator admits bounds \`a  la Dolgopyat \cite{Do}, which entail the  polynomial growth of $P(s, w)$   for  $|\Im s| \to \infty$. 
 
 \subsection {Comparison with the Baladi-Vall\'ee approach.} 
As already mentioned in Section \ref{genfra},   our   methods  are strongly similar  to those that  are used for rational trajectories  in \cite{BaVa}. The paper \cite{BaVa}  deals with  the generating function  of the rational set ${\cal Q}:= {\mathbb Q} \cap [0, 1[$, endowed with the denominator size $q(x)$, namely 
\begin{equation} \label {Qsw}
 Q(s, w) :=  \sum_{x\in \mathcal Q}\exp(w C(x))\, q(x)^{-s} \, . 
 \end{equation}
 The series is  directly  expressed with the weighted transfer operator ${\bf H}_{t, w}$ introduced in \eqref{debH}, as the equality $Q(s, w) = (I-{\bf H}_{t, w})^{-1}[1](0)$ holds\footnote{This is not completely exact, as the final operator ${\bf F}_{t, w}$ also intervenes (see Section \ref{Sect:constants}).}, with $t= s/2$.   This explains why  the  first two steps of the rational study   are easier than  in the present one,   that is  more involved due to the three issues described in  Section \ref{genfra}. 
 
  Along Step 3, the main tools are  essentially the same, namely,  the Landau Theorem\footnote{The paper  \cite{BaVa} does not use  the  ``ready for use''  version of  the Landau Theorem   described in Section \ref{Sec:Landau}, and, as  in many  other works (see for instance \cite{PoSh}), it proves the analog result ``by hands''.},  and   the Quasi-Powers Theorem.

\subsection{Comparison with already known results.}\label{Sect:comparison}
 \    \   \vskip 0.1cm 
{\bf \em Plain periodic trajectories.} The periodic trajectories of dynamical systems are very well studied, by means of various zeta series, as Ruelle introduced it   in his  pioneering work  \cite{Ru76}. In the case of the Gauss map, the Selberg zeta series  is a powerful tool  that is well-adapted to plain periodic trajectories (without a cost $C$), and there is a close connection exhibited by Mayer in \cite{Ma} between 
the plain Dirichlet series $P(s) := P(s, 0)$,  the Selberg zeta series and the zeta Riemann function.  This leads to  the asymptotic estimate of the cardinality $|{\cal P}_N| \sim  [(3\log 2)/ \pi^2 ] \cdot N^2$.  
The study of the remainder term is more involved and was performed by 
 Boca      in \cite{Bo07} 
who obtains  an error term of order $O_{\varepsilon}(N^{7/4+\varepsilon})$ for any $\varepsilon >0$.  His proof relies on  precise estimates  on  the  distribution of pairs  $(x, y)$ of coprime integers that satisfy $xy = 1 \mod q$  (for an integer $q$),  together with classical results on the Riemann zeta function. Following this approach, Ustinov \cite{Us13} improves the estimate of the error term and  obtains
an order $O(N^{3/2}\log^4 N )$.

Our series $P(s, w)$ can be viewed  as a ``twisted zeta Selberg series'', where the twist is brought by the cost $C$;   many geometric properties  of the Selberg zeta series  ``disappear'' with  this twist and  it is not clear  how to extend  Boca's and Ustinov's methods to  weighted  periodic trajectories.

{\bf \em Weighted periodic trajectories.} To the best of our knowledge, Parry and Pollicott in \cite{PaPo}, then  Pollicott  in \cite{Po86}  were the first to study  weighted periodic trajectories, with analytic combinatorics methods. Pollicott  indeed introduces a weighted generating function, and, as he only performs  average-case analysis,   he may  use Tauberian Theorems that provide estimates of the mean values, as in Theorem \ref{Thm:mainCF}$(i)$,  but with only the first (dominant) term,  without remainder terms. Later on, Faivre  in \cite {Fa} used slightly different methods, and obtained the same results.

With similar methods, Kelmer \cite{Ke12} studies the set $\mathcal{P}_N^{(n)}$  of rqi numbers  whose 
alternate sum of partial quotients is fixed and equal to $n$.   He  considers the  restriction  of  a    function $f \in \mathcal{C}([0,1])$ to the set   $\mathcal{P}_N^{(n)}$ and proves that, for $n$ fixed and $N \to \infty$,  its mean value  tends  to the  mean value  $\mathbb E [f]$ of $f$ with respect to the Gauss density $\psi$. He does not  make precise the speed of convergence.

{\bf \em Remainder terms?}  
One may expect more precise  asymptotic  estimates, for  the mean value (estimates with remainder terms),  or for the  variance, (with even a rough  estimate); such estimates are also   needed  for  a distributional study. They appear to be based on a more precise knowledge of the weighted transfer operator ${\bf H}_{t, w}$ for $|\Im t| \to \infty$. This better knowledge was indeed brought by works of Dolgopyat, notably in \cite{Do}, first for  {\em unweighted} transfer operators associated with  Markovian dynamical systems with a {\em finite} number of branches.  Then,  Pollicott and Sharp  have used Dolgopyat's result  (see for instance \cite{PoSh}), and  performed various average-case analyses, where they  obtain estimates for various mean values  (related to  restricted periodic trajectories) with remainder terms.

Dolgopyat's results  were further adapted  in \cite {BaVa} to  the present case of interest -- a weighted transfer operator    associated with  a Markovian dynamical system with a {\em  infinite denumerable} number of branches--.  Then,  in \cite{BaVa}, Baladi and Vall\'ee   have used  this extension of Dolgopyat's result  for  distributional studies of  weighted rational trajectories and derived asymptotic Gaussian laws. To the best of our knowledge, the present work is the first which is devoted to distributional studies of  weighted periodic trajectories. 

 \subsection {Plan of the paper.}    Section 2 
 presents the main objects, and the general methodology; it   introduces  the  various generating functions of interest, describes their  relations and their first properties.   Section 3  introduces  the weighted transfer operator,  the two  useful functional spaces $\A$ and  ${\cal C}^1({\cal I})$, and  describes their role in the analysis.     It  relates   analytic properties of the generating functions  to fine properties of the  weighted transfer operator:  bounds  \`a la Dolgopyat in the space ${\cal C}^1({\cal I})$, and  properties of the traces  in the space $\A$. As our work needs a precise (and long)  study of traces, we perform it in the Annex and just summarize the main results in Section 3.  
 Section 4  transfers  analytic properties of the bivariate generating functions into probabilistic properties of the cost: it  uses two main tools (the Landau Theorem, then  the Quasi-Powers Theorem) to  transfer analytic properties of the bivariate generating functions into probabilistic properties of the cost, notably 
 the asymptotic  Gaussian law (Theorem \ref{Thm:mainCF}). It also  performs a precise  study of the main constants of interest (Theorem \ref{Thm:mainconstants}). In Section 5, we explain how to ``transfer'' to periodic trajectories some results that  are already proven on rational trajectories.  We study in particular the case of ``constrained''  periodic trajectories.  The paper ends with an Annex, which is devoted to a self-contained study of traces of operators.   It is needed  in Section 3 and  may be of independent interest for people  that are not specialists  of this theory.

 {\em The  results that are already known and   directly used in the paper are labeled with letters, whereas the new results that are proven in the paper are labeled with numbers. }

\section{Presentation of the  main objects} 

The present section  introduces the main actors: first, in Section \ref{Sect:EDS}, the underlying  dynamical system associated with the Gauss map,  together with the main properties of its branches (Section \ref{rhoUNI}).  We then   recall  well-known facts about irrational quadratic numbers and fundamental units  in Section \ref{Sect:QIN} and describe the costs of interest  (of moderate growth)  (Section \ref{Sect:CMG32}). 
  {
  Then,  Section   \ref{vargenfunct}
   introduces  the  various (bivariate) generating functions of interest.  The initial generating function  $P(s, w)$ involves quadratic numbers, and  thus deals with a primitivity condition that is difficult to manage. We first  ``suppress'' this  primitivity condition,  then we change of size: we replace the initial size $\epsilon$  by  another (closely related) size $\alpha$, which  satisfies multiplicative properties. We   finally  obtain   a series $E(s, w)$. 
 The end of the  Section  describes the first easy properties of these generating functions and  explains  why it suffices to deal with $E(s,w)$ throughout the rest.  } 
 
\subsection{The Euclidean dynamical system}\label{Sect:EDS}
The continued fraction expansion    encodes the 
trajectories of the dynamical system associated with the Gauss map
$T: \mathcal{I} \rightarrow \mathcal{I}$,
\begin{equation}  {\cal I} := [0, 1], \qquad  T(x):=    \frac 1 x  -
    \left\lfloor \frac 1 x \right\rfloor, \quad \ \hbox { for } x \not = 0, \ \
    T(0)= 0 \, .
\label{standard}
\end{equation}
Here, $\lfloor x \rfloor$ is the integer part of
$x$. The   trajectory $${\cal T}(x) = (
T(x), T^2(x), \ldots, T^k(x),\ldots)$$ of  an irrational $x$ never meets $0$ and is
encoded by  the infinite sequence 
$$(m_{1}(x),
m_{2}(x), \ldots, m_{k}(x), \ldots) \  \hbox{with} \  m_{k}(x) :=m(T^{k-1}(x)),  \ \
m(x) :=\left\lfloor {\frac {1}{x}}\right\rfloor.$$
The map $T$  is a piecewise complete interval map,   with its set   $\mathcal{H}$ of inverse branches, 
$$
{\cal H} =   \left\{h_{m}\mid  x \mapsto \frac{1}{m+x}; \quad   m\ge 1 \right\}\ .
$$

For any $k \ge 1$, the set $\mathcal{H}^k$ of inverse branches of $T^k$  gathers LFTs  (linear fractional transformations) of the form 
\begin{equation}\label{Eq:hom}
 h=h_{m_1}\circ h_{m_2}\circ \dots \circ h_{m_k}\, .
\end{equation}
  By definition, the depth of  a LFT $h \in {\cal H}^k$ (denoted by $|h|$) is  equal to $k$.   The set 
$$\mathcal{H}^+  := \bigcup_{k\ge 1}\mathcal{H}^k\, $$
gathers the inverse branches of any  strictly positive depth.  For $h \in {\cal H}^+$,  the  interval  $h(\mathcal{I})$ is called  the fundamental interval relative to $h$. For  a LFT  $h$   described  in  \eqref{Eq:hom},  the interval $h({\cal I})$  gathers  reals $x$ that  have their $i$-th partial quotients equal to $m_i$ for any  $ i \in [1..k]$.  For any $k \ge 1$,  the  fundamental intervals $h({\cal I})$ for $h \in {\cal H}^k$  form a topological partition of the unit interval $\mathcal I$.

\subsection{First properties of the dynamical system.}\label{rhoUNI}   
Here,  we  describe  the main  properties (of geometric flavor) satisfied by the branches of the set ${\cal H}^+$ defined in Section \ref{Sect:EDS} and their derivatives. 
The intervals of interest are $\mathcal{I}:=[0,1]$ and $\mathcal{J}:=[-1/4, 9/4]$, the last one intervenes because of Property $(P3)$ associated with $r = 5/4$ .

\begin{enumerate}

\smallskip
\item[$(P1)$]   There is a constant $L$ (usually called the distortion constant -- here equal to $\sup (8/3, \exp (20/3))$--  
  for which the  two inequalities  hold  for   any   inverse branch  $h  \in {\cal H}^+$,  
$$|h''(x)|\le  L \, |h'(x)|, \quad \forall x \in {\cal J}, \qquad  \frac 1 L\le \frac{|h'(x)|}{ |h'(y)|} \le  L, \quad  \forall (x, y) \in {\cal J}^2\, .$$

\smallskip 
\item[ $(P2)$]   The map $T$ is  piecewise strongly expanding.  There is a contraction ratio 
$\rho$ equal to    $\phi^{-2}$  for which the   following inequalities  hold and involve the fixed point $x_h$ of  the inverse branch $h$ and the distortion constant $L$, namely 
$$|h'(x_h)|\le {\rho}^{|h|} , \,\qquad |h'(x)| \le L {\rho}^{|h|}  \quad   \forall x \in {\cal J}, \qquad \forall h \in {\cal H}^+ \, . $$

\smallskip
\item[ $(P3)$]    Consider the disk ${\cal V}_r$ of center 1 and radius $r$.  For  
$r \in ]1, (1+ \sqrt 5)/2[$, there exists $\tilde r < r$ for which  the following holds,    for any  branch $h \in {\cal H}^+$: 

\begin{itemize} 
\item [$(i)$]
It  is analytic on ${\cal V}_r$   and  the inclusion $ h({\cal V}_r) \subset {\cal V}_{ \tilde r} $  holds. 

\item[$(ii)$]The analytic extension $\und h$ of the map $x \mapsto |h'(x)|$ is non zero on ${\cal V}_r$ and  the function $z\mapsto  \log \und h(z)$ is analytic  on ${\cal V}_r$. 
\end{itemize}
We consider  in the following   the case $r = 5/4$ and denote ${\cal V}:= {\cal V}_{5/4}$.

\smallskip
\item[$(P4)$] The  two series 
$$ \sum_{h \in \mathcal{H}} \und h^s(z), \qquad \sum_{h\in \mathcal{H}} \log \und h(z) \, \und h^s(z),  $$ are absolutely convergent  for any $z$ in the closure of the disc $\overline{\cal V}$ and
 on the compact subsets of the half-plane $\Re s >1/2$.

\smallskip
\item[$(P5)$]   The UNI (Uniform Non Integrability) condition  holds. It is described  with
 the ``distance''  
$\Delta (h, k)$   between 
 two inverse branches $h$ and $k$  of same depth,  (introduced  in \cite {BaVa}), 
\begin{equation*} \label {delta}
\Delta (h, k)  :=  \inf_{x \in {\cal I}} |\Psi'_{h, k} (x)|,  \quad \hbox{ with } \quad 
  \Psi_{h, k} (x) := \log\frac {|h'(x)|}{|k'(x)|} \, ,
\end{equation*} 
 and the  measure $J(h, \eta)$ of the ``ball'' of center $h$ and radius $\eta>0$,  
\begin {equation*} \label {J}
J(h, \eta) :=  \sum_ {\substack{ k\in\cal H^{n} ,\\ {\Delta(h, k) \le  \eta}}}
|k({\cal I})| \, , \quad  \hbox { (for  $h \in {\cal H}^{n}$)} \, .
\end{equation*}
The condition  UNI  states the existence of  a constant $K$ for which 

\centerline{$
J(h, \rho ^{an})  \le K   \rho^{an}\, , \quad \forall a   \, (0<a  <  1), \ \ 
\forall n, \ \  \forall h \in {\cal H}^n \, . $}

\noindent There is also a technical statement  about the second derivatives $\Psi''_{h, k}(x)$ 

\centerline { 
$
Q:= \sup \{ |\Psi''_{h, k}(x) | \mid n \ge 1 \, , h,k \in \mathcal{H}^n, x  \in \cal I\} < 
\infty 
$. }

\end{enumerate}

\subsection{\bf Quadratic irrational numbers.}\label{Sect:QIN} 
Quadratic irrational numbers are  the only  numbers $x$ for which the trajectory  ${\cal T}(x)$ is  eventually periodic, and a {\em reduced } quadratic irrational number (rqi in shorthand) is associated with  a purely periodic trajectory. A rqi $x$    is then completely defined by its (smallest) period  $p(x)$  which defines a {\em primitive} cycle (of  the same length).

The set of rqi numbers coincides with the set of fixed points $x_h$  of LFTs $h\in\mathcal{H}^+$.  All  the LFTs which determine a given rqi number  $x$  are the powers of  a given LFT which is primitive. This defines a bijection between the  set ${\cal P}$ of rqi numbers   and the  set of primitive LFTs,  together with  a decomposition of ${\cal H}^+$ as a  disjoint sum, 
\begin{equation} \label {HP} {\mathcal H}^+=\bigcup_{k\ge 1 }{\mathcal P}^{(k)},\, \quad 
\mathcal{P}^{(k)}:=\{h^k,\, h\in \mathcal{P}\}\, .
\end{equation}
A fundamental  quantity associated with  a {rqi} number $x$ 
is the product of all the shifted $T^i(x)$ along its period. If $h$ is the primitive LFT which defines $x = x_h$, one  considers
$$ \alpha (x):= \prod_{i= 0}^{p(x)-1} \, T^i (x)  =   |h'(x_h)|^{1/2}\, .$$
Here,   the fundamental unit $\epsilon (x)$   is chosen as the size for  the rqi $x$ (see Section \ref{S1.2}). It  is related to  the  product  $\tilde \alpha (x)$ of  all the shifted $T^i(x)$ along  the {\em minimal even} period of length $e(x)$ (already   mentioned in Section \ref{S1.2}), and  the following equality  holds
$$\epsilon (x)= \tilde \alpha (x) ^{-1}, \qquad  \tilde \alpha (x):= \prod_{i= 0}^{e(x)-1} \, T^i (x) \, , $$
that also translates as 
$$ \epsilon (x) = \alpha (x)^{-r(x)}  \hbox{
  with  $r(x) :=  1$ for even $p(x)$,  and  $r(x) :=  2$ for odd $p(x)$}. $$  
  
  It is then natural (and useful) to extend the definition of $\alpha, \epsilon, r$ from the set  ${\cal P}$ of rqi numbers to the set ${\cal H}^+$. For any $h \in {\cal H}^+$ (primitive or not),  we define $\alpha(h)$  as 
\begin{equation} 
\label{alpha}\alpha(h) :=   \prod_{i= 0}^{|h|-1} \, T^i (x_h)  = |h'(x_h)|^{1/2}\, , 
\end{equation}
\begin{equation} \label{alphaepsilon}
 \epsilon (h):= \alpha (h) ^{- r(h)}, \, \hbox{
  with  $r(h) :=  1$ for even $|h|$,  and  $r(h):=  2$ for odd $|h|$}\, . 
  \end{equation}  
With this extension, $\alpha, \epsilon $ are now defined on the set ${\cal H}^+$ and $\alpha$ satisfies the multiplicative property\footnote{Remark that  this is  not the case for $\epsilon$.} 
 $\alpha(h^k) = (\alpha(h))^k$. 
Now, the set ${\mathcal H}^+$ is endowed with the size $\epsilon$ (closely related to $\alpha$) and a cost $c$. 

The rational trajectories are classically studied with the denominator size: one associates  with $h \in {\cal H}^+$ the denominator  $q(h)= |h'(0)|^{-1/2}$. The distortion constant $L$ introduced in  Property $(P1)$ of Section 3.1 now relates the two sizes $\epsilon$ and $q$, defined on ${\cal H}^+$,  via the inequality 
$$ L^{-1/2}\,  \epsilon(h)^{1/2} \le  L^{-1/2}\,  \alpha(h)^{-1} \le    q(h)  \le    L^{1/2} \, \alpha (h)^{-1} \le  L^{1/2}\,  \epsilon (h) \, , $$
that also proves  that the set ${\cal P}_N$ of rqi numbers with size $\epsilon (x) \le N$ 
 is  finite,  and satisfies $|{\cal P}_N|  = O(N^2)$.   
 
 \subsection {Cost of moderate growth.}\label{Sect:CMG32}  
 A digit-cost $c: {\mathbb N}^* \rightarrow {\mathbb R}_{\ge 0}$ 
is also a mapping  $c: {\cal H} \rightarrow {\mathbb R}_{\ge 0}$, defined by the equality  $c(h_m):= c(m)$. We next extend $c$  to ${\cal H}^+$ in an additive way and define 
$$ c:{\cal H}^+  \rightarrow \mathbb R_{\ge 0} \quad \hbox{with} \quad   c(h):= \sum_{i=1}^{k}c(h_i)\quad \hbox{ if }  h=h_1\circ h_2 \circ \ldots \circ h_k\, .$$
We  now consider  costs of moderate growth\footnote{ We will explain in Section \ref{Sect:33} why we restrict ourselves  to this class of costs.}.

\begin{definition} A digit-cost $c$ is of  moderate growth  if there exists $(A, B)$ with $A \ge 0, B\ge 0$ and $A+B >0$ for which  
\begin{equation} \label{cmg} c(m) \le  A \log m + B \qquad  \hbox{for all  } m \ge 1\, .\end{equation}
\end{definition}

In the present rqi framework,   a precise comparison between the various generating functions  needs a precise study of these costs  (which was not needed  in previous studies, for instance in \cite{BaVa}).
 In particular, Relation  \eqref{cmg3} between costs and derivatives of inverse branches will be central in the proof  of Proposition \ref{Pro:Z-P}.
 
\begin{lemma} \label{cost}
 {For  a digit-cost $c$ of  moderate growth, there exists a  pair $(K,   d) $ of strictly positive numbers  for which
  \begin{equation} \label{cmg3} e^{c(h)} \le  |h'(x_h)|^{-d}, \quad e^{c(h)} \le  K \,    |h'(0)|^{-d}   \qquad \hbox{ for any $h \in {\cal H}^+$}\, .
  \end{equation}The cost is said to be of exponent $d$. }
    \end{lemma}

\begin{proof} 
For any $h$ of depth 1, of the form $h= h_m$,  the equality   $2 \log m = |\log |h'(0)|| $ holds and  Relation \eqref{cmg} writes
$$  c(h) \le   (A/2)    |\log |h'(0)| | + B \qquad  \hbox{for all  } h \in {\cal H}\, . $$
We first prove that the previous relation extends to any $h \in {\cal H}^+$ as 
  \begin{equation}  \label{cmg2} c(h) \le (A/2) |\log |h'(0)| |+  B |h| \qquad  \hbox{for all  } h\in {\cal H}^+\, ,
  \end{equation}
  where $|h|$ is the depth of the LFT $h$.
One  indeed performs  an easy induction;  when $c = h_1 \circ h_2$, one has 
$$ c(h_1\circ h_2) = c(h_1) + c(h_2) \le  \frac A 2 |\log |h_1'(0)||+ \frac A 2 |\log |h_2'(0)||   + B\, |h_1| + B\, |h_2|\, . $$
Now, for any $x \in [0, 1]$,   the inequality  $|h'(0)| \ge |h'(x)|$ holds, and 
$$  |\log |h_1'(0)||+|\log |h_2'(0)||  \le 
 |\log |h_1'(h_2(0))||  +| \log |h_2'(0) || = | \log | h'(0) ||\, . $$
 Relation \eqref{cmg2} is  now proven, and  implies 
 
 \centerline{$c(h) \le (A/2)  |\log |h'(x_h)| |+  B |h| $.}
 
Using  Property $(P2)$, with $ A:=  2a, B := -b \log  \rho$, now  entails   the bound 
  $$ e^{c(h)} \le  \exp[-a \log |h'(x_h)|]  \exp[  B |h|]  = |h'(x_h)|^{-a} \cdot \rho^{-b|h|} \le  |h'(x_h)|^{-(a+b)} \, .$$
  With the distortion constant $L$,  
   this  entails the second inequality with $K= L^{a+b}$.

\end{proof}

 \subsection {\bf  Generating functions of interest.} \label {vargenfunct}
 When the  set $\mathcal{P}$ of rqi numbers is endowed with the size $\epsilon$ and an additive  cost $C$ that comes from a digit-cost $c$,     its basic generating function   is a  Dirichlet weighted generating function  of the form
 \begin{equation}\label{Eq:P}
  P(s,w):=\sum_{x\in \mathcal P}\exp(wC(x))\, \epsilon(x)^{-s} \, , 
\end{equation}
where the variable $w$ ``marks'' the cost $C$ and the variable $s$ ``marks'' the size $\epsilon$.  

  We  now define two variants of series $P$, namely series $Z$ and series $E$, that are closely related to the initial series $P$ and easier to deal with.  The series $Z(s, w)$ ``avoids the primitivity condition'', whereas the series $E(s, w)$ deals with size $\alpha$ instead of size $\epsilon$.

\smallskip
{\bf \em Series $Z$.} To avoid the primitivity condition, we ``replace'' the series $P(s, w)$ by  the ``total'' series                            
\begin{equation}\label{Eq:Z}
Z(s,w):=\sum_{h\in  \mathcal{H}^+}\exp(wc(h))\, \epsilon(h)^{-s} \, , 
\end{equation}
and the decomposition \eqref {HP} 
 leads to the decomposition  \begin{equation}\label{Eq:ZP1}
   Z(s, w) =  P(s, w) +  A(s, w) \, , 
    \end{equation}
$$\hbox{with} \    A(s, w) :=\sum_{k\ge 2}  P_k(s, w) \  \hbox{ and } \  P_k(s, w) :=  \sum_{h \in {\cal P}^{(k)}}  \exp(wc(h)) \cdot \epsilon(h)^{-s} . $$
 Proposition \ref{Pro:Z-P} will prove  that the  ``main'' term in \eqref{Eq:ZP1} is brought by   $ 
P(s,w)$. 

\smallskip
{\bf \em Series $E$.} We  wish to deal with  size $\alpha$   (instead of size $\epsilon$).  Size $\alpha$ indeed  fulfills  multiplicative properties described in Section \ref{Sect:QIN}.  This is why we introduce the series 
$$ Y(s, w) := \sum_{k \ge 1} Y_k(s, w) \, , $$
 \begin{equation} \label {Ysw}
\hbox{with} \qquad  Y_k(s, w) := \sum_{h \in {\cal H}^k}\exp(wc(h))\, \alpha(h)^{2s}=\sum_{h \in {\cal H}^k}\exp(wc(h))\, |h'(x_h)|^{s} \, . \end{equation}
We  also write $Z(s, w)$ as 
$$ Z(s, w) := \sum_{k \ge 1}  Z_k(s, w), \qquad  Z_k(s, w) := \sum_{h \in {\cal H}^k}\exp(wc(h))\, \epsilon(h)^{-s} \, . $$
The notion of minimal even period leads to  Relation \eqref{alphaepsilon} between $\epsilon$ and $\alpha$. This entails the  following relations   between the series $Z_k(s, w)$ and $Y_k(s, w)$, 
$$Z_k(s, w) = Y_k(s/2,w) \ \hbox{ (for $k$ even)}, \quad Z_k(s, w) = Y_k(s, w) \  \hbox{ (for $k$ odd)} \, ,  $$
  and  leads to the decomposition  
  \begin{equation}\label{Eq:ZE1}
  Z(s,w)=E({s},w)+O(s,w)\ , 
  \end{equation}
   where the even and the odd part of $Z(s,w)$ are related to components $Y_k(s, w)$ as \begin{equation}\label{Esw}E(s,w)= \sum_{\substack{k {\rm \, even}\\ { k\ge 2}}}  Y_{k}({s}/{2}, w)\, , \quad O(s,w)= \sum_{\substack{k {\rm \, odd}\\ { k\ge }1}}  Y_{k}(s, w) \, .
 \end{equation}
 Proposition \ref{Pro:Z-P}  will prove that the  ``main'' term in \eqref{Eq:ZE1} is brought by   $ 
E(s,w)$.

\smallskip
\smallskip
\paragraph{\bf \em Sets $\Gamma(a)$.} 
 Each term of the generating functions involves a product 
$ |h'(x)|^s \exp (w c(h)) $. 
With a cost $c$  (of moderate growth)  of exponent $d$  and   a real $a>0$,  we   associate the sets ${\cal C}_a$  and $\Gamma(a)$,  
 $$  {\cal C}_a := \{ (\sigma_0,  \nu_0) \mid \sigma_0 > 1/2, \,    \nu_0>0, \,  \sigma_0- d  \nu_0 =  a \}\, , $$
 \begin{equation}\label{Eq:Q}
\Gamma(a):= \{(t, w)  \mid \Re t>\sigma_0, \, |\Re w|<\nu_0, \quad  (\sigma_0, \nu_0) \in {\cal C}_a \}\, , 
\end{equation}
that are well-adapted to the study of such products.

In the sequel, we will deal with $\Gamma(a)$ or
some of its subsets of the form ${\cal S} \times{\cal W}$ where ${\cal W}$ is a complex 
neighborhood of $0$ of the form $\{ |w|\ \le \nu_0 \}$ and ${\cal S}$ is a  (part of a) vertical strip of the
form $\{ s\mid \Re t> 1- \delta_0\}$. Such a domain is contained in $\Gamma(a)$ as soon as the inequality $1-\delta_0 -d \nu_0 >a$ holds.

\smallskip
{\bf \em Series $Y$.}   We have introduced  the three series $P, Z, E$, together with series $Y$. The series  $Y$ will be  a useful reference in this context, as we now explain:

\begin{lemma} \label{lemY} Consider the series $X(s, w)$ with $X \in \{ P, Z, E\}$ and a pair $(s, w)$ with  $\nu := \Re w$ and $\sigma:= \Re s$.  
\begin{itemize}

\item[$(a)$] The following  relations hold between $X$ and $Y$, 
$$ |X(s, w)| \le  Y(\sigma /2, |\nu|), \quad |P_k(s, w)| \le   Y((k/2) \sigma, k|\nu|)\, .$$

\item[$(b)$] The following bound holds for each component $Y_k(s, w)$ on each $\Gamma(a)$, and involves the distortion constant $L$, 
\begin{equation} \label{bound1} 
 |Y_k(t,w)|\le
  L \,   \rho^{(a-1) k}\, \quad  \hbox{  for any $k \ge 1$} \, .
  \end{equation} 
  \end{itemize} 
  \end{lemma} 
  
  \begin{proof} \ \ 
  $(a)$ The first bound  is  due to the inequality  $\epsilon (h)^{-\sigma} \le  \alpha(h)^{\sigma} = |h'(x_h)|^{\sigma/2} $ that holds  
 for $\sigma>0$. The second bound uses the multiplicative property  of size $\alpha$, namely $\alpha(h^k)= \alpha (h)^k$. 
 
 $(b)$  
For a cost $c$ of exponent $d$, and  a pair  $(t, w)$  with $\sigma: = \Re t$ and $\nu:= \Re w$,  Lemma \ref{cost}  yields 
\begin{equation} \label{bound1/2}
|Y_k(s, w)| \le Y_k(\sigma, |\nu|) =  \sum_{h \in {\cal H}^k}  e^{|\nu| c(h)} |h'(x_h)|^{\sigma}\le \sum_{h \in {\cal H}^k}  |h'(x_h)|^{\sigma -d|\nu|}\, . 
 \end{equation}
  Moreover,  with  Properties $(P1)$ and $(P2)$,  one has $$ 
\sum_{h \in {\cal H}^k} |h(0)- h(1)|  = 1,   \quad  \sum_{h \in {\cal H}^k} |h'(0)| \le L, 
 \quad |h'(x_h)| \le \rho^k \qquad \hbox {for $ h \in {\cal H}^k$}\, ;  $$
 This entails the  bound
$$ \sum_{h \in {\cal H}^k}   |h'(x_h)|^{a}
 \le \rho^{(a-1)k}L\sum_{h \in {\cal 
H}^k} |h(0)- h(1)|\le L \, \rho^{(a-1) k}\, , 
 $$
 and finally the bound  \eqref{bound1}   when  $(t, w) \in \Gamma(a)$. 
 \end{proof}

\subsection{Analytical equivalence between generating  functions}\label{first-prop}
 
The inequality 
\begin{equation}\label{lowerbound}
 Y_k(1,0)\ge \frac{1}{L}\sum_{h\in \mathcal{H}^k} |h(1)-h(0)|=\frac{1}{L}\quad \hbox{ for any } k\ge 1  
\end{equation}
 entails  that  the series that defines $E(s, w)$ is   divergent  at $(2, 0)$. The point $(2, 0)$ plays an important role, and we  now wish to compare more precisely  the three series of interest $P, Z, E$  near the vertical line $\Re s = 2$. We  first  introduce a definition: 
 
 \begin{definition}\label{Def:AP} Two Dirichlet series $A(s,w)$ and $B(s,w)$  are analytically equivalent if,  for any  neighborhood $\mathcal{W}_0$ of $w=0$ small enough, there are two  reals $\delta_0$ and $\delta_1$ with $\delta_1>\delta_0>0$ for which the following holds, 
\begin{enumerate}

\item[$(a)$] For any $w \in {\cal W}$,   each function  $s \mapsto A(s, w)$, $s\mapsto B(s, w)$    is  analytic  on the half plane 
${\cal P}_0 :=  \{ s \mid  \Re s >2+ \delta_0\}$  and  bounded on   the  set  $ \overline {\cal P}_0 \times \overline {\cal W}_0 $; 

\item[$(b)$]   For any $w \in {\cal W}$, the difference   $D(s,w):=A(s, w)-B(s,w)$   is analytic  on  the vertical strip ${\cal S}_1:= \{ s \mid  |\Re s- 2|<\delta_1 \} $ and bounded 
  on $\overline {\cal S}_1 \times \overline {\cal W}_0$.

\end{enumerate} 
\end{definition}

Next Proposition \ref{Pro:Z-P}    shows that  the three series of interest are  all analytically equivalent.

 \begin{proposition} \label{Pro:Z-P}
 Consider  a cost of moderate growth. Then   
  the three generating functions $P(s, w), Z(s, w)$ and  $E(s, w)$ are analytically equivalent.
\end{proposition}

 \begin{proof}  We use Lemma \ref{lemY} and  the series $Y$ plays an important role in the proof.  
 
   {\em Condition $(a)$.} Consider the neighborhood ${\cal W}_0:= \{w \mid |w| <  \nu_0\}$. The domain ${\cal P}_0 \times {\cal W}_0$ is a subset of $\Gamma(a)$ with $a >1$ as soon as $\delta_0 >2 d \nu_0$. Moreover,  
   as soon as $(s/2, w)$ belongs to $\Gamma(a)$ with $a>1$, Lemma  \ref{lemY} entails that $E(s,w)$, $P(s,w)$ and $Z(s,w)$   all satisfy Condition $(a)$.

 \medskip
{\em Condition $(b)$ for  the difference $Z(s,w)-P(s,w)$.} This difference is described   in \eqref{Eq:ZP1}. 
 For  $k \ge 2$,  we use Lemma \ref{lemY} together with bound  \eqref{bound1/2} that entail the inequality  
$$|P_k(s, w)| \le   P_k(\sigma, \nu)\le Y((k/2) \sigma, k|\nu|)  \le    \sum_{ h \in {\cal H}^+}    |h'(x_h)| ^{(k/2) (\sigma -2d |\nu|)}\, .$$ 
Assume  now that  $(s/2, w) \in \Gamma(a) $ with $a>1/2$. This means that  $\sigma -2d	|\nu|$  is at  least $ \gamma > 1$ and let $\gamma = 1 + 3 \beta$ with $\beta >0$. Then, for any $k \ge 2$, the inequality  
$(k/2) \gamma \ge (k-1) \beta +1$ holds and entails  the  bound
$$ |h'(x_h)|^{(k/2)( \sigma -2d|\nu|)} \le |h'(x_h)|^{ (k-1)  \beta}  \cdot |h'(x_h)|\, , $$
  and thus,  in the same vein as  in the proof of Lemma \ref{lemY}, the bound,   
$$Y((k/2)\sigma, k|\nu|) \le   L \,    \sum_{n \ge 1}  \rho^{n(k-1)  \beta}\, , \qquad 
\hbox {for any $k \ge 2$} \, .$$
 Finally,  as soon as $(s/2, w)\in \Gamma(a)$ with $a>1/2$,  one obtains
\begin{align*}
  |Z(s,w)-P(s,w) | &\le  \sum_{k \ge 2}  Y((k/2)\sigma, k|\nu|) \le    L \sum_{n \ge 1} \sum_{k \ge 1}  \rho^{nk  \beta} \\
&=   L  \sum_{n \ge 1} \frac{ \rho^{n \beta}}{1-  \rho^{n \beta}}
\le    L  \frac  {\rho^\beta}  { (1- \rho^\beta)^2}   \, .
\end{align*}
Now,   consider  ${\cal W}_0 := \{ w \mid |w| < \nu_0 \}$.  The domain ${\cal S}_1 \times {\cal W}_0$ is a subset of $\Gamma(a) $ for $a >1/2$ as soon as  $\delta_1 < 1-2 d \nu_0$. If furthermore $\nu_0 <1/(4d) $   then we can choose  $ \delta_1 >\delta_0 > 2 d \nu_0$,    and  
 Condition $(b)$ holds  for  $Z(s,w)-P(s,w)$  for    ${\cal W}_0 $. 

\medskip  
 {\em Condition $(b)$ for  the difference $Z(s,w)-E(s,w)$.} This  difference $Z(s,w)-E(s,w)$ is equal to  the odd series $O(s,w)$. The bound \eqref{bound1} implies that $O(s,w)$ is absolutely convergent and uniformly bounded when $(s/2,w)\in \Gamma(a)$ with $a>1/2$.  Then, for any  neighborhood ${\cal W}_0$ with $\nu_0 < 1/(4d)$,  Condition $(b)$ holds for the same choice of $\delta_1, \delta_0$  as before.  
\end{proof}

Then, with the previous proposition, it will be sufficient to study $E(s,w)$ around $\Re s=2$.  Next  Section \ref{Sect:3} will be then  devoted to this task.

\section{Generating functions and  weighted transfer operators}\label{Sect:3}

Beginning with the initial generating function  $P(s, w)$, we  have  related $P(s, w) $ to {
$E(s, w)$.  In order to study  more deeply the series $E(s, w)$,  
  we introduce our main tool,  the  weighted transfer  operator ${\bf H}_{t, w}$, and further relate  the series  $E(s, w)$  to the transfer  weighted operator ${\bf H}_{t, w}$ and its iterates ${\bf H}^k_{t, w}$ with $t=s/2$.

In Section \ref{Sect:WTO}, 
we 
 present the transfer operators  and   make precise the two functional spaces on which they act.  We describe  the 
first properties of these operators in Section \ref{Sect:33}.   Then Section \ref{gen-pres} explains the role of each functional space. The space $\A$  is a space of analytic functions where the traces of operators is well defined; we obtain a {\em precise}  alternative expression  of the series $E(s, w)$ in terms of traces in Section \ref{Sect:traces};  we then deduce precise information on $E(s, w)$  when $|\Im s|$ is bounded (see Sections \ref{Sec:near} and \ref{Sec:Middle}) and, in particular,  a precise study of its poles in Section  \ref{Sec:near}.    However, the norms of the operators --when they act on $\A$-- exponentially grow with respect to  $|\Im s|$; this is why the space $\A$ is not well adapted to the study of $E(s, w)$ for large  $|\Im s|$.  In Section   \ref{Sec:Dolgo}, we then deal  with the space ${\cal C}^1 ({\cal I})$, where  we use  only {\em approximate } relations between the series $E(s, w)$ and the norms of  transfer operators.  Estimates \`a la Dolgopyat  -- of polynomial growth with respect to $\Im s$--  are valid on  the norms of the operators  on the functional space ${\cal C}^1 ({\cal I})$ and  they entail  that $E(s, w)$ is of polynomial growth for $|\Im s| \to \infty $.
Finally,   with Proposition \ref{Pro:Z-E},  we   will  return from  $E(s,w)$   to $P(s,w)$, and obtain a precise knowlege of the initial series $P(s, w)$  for any complex $s$ (and $w$ near 0) in Section  \ref{Sec:3P}.

\subsection{Weighted transfer operator and functional spaces}\label{Sect:WTO} 
We now introduce our main object,  the weighted transfer operators (via the component transfer operators).

The {\em component weighted transfer operator} $  \mathbf{H}_{[h],t, w}$  relative to an inverse branch $h \in {\cal H}^+$  involves the  map $\und h$ defined in $(P3)$, 
\begin{equation}\label{Eq:H}
   \mathbf{H}_{[h], t, w}[f] (x):=  \exp (w c(h)) \, \und h^t (z) \, f \circ h(z) \, .
\end{equation} 
The sum of  the weighted component operators  ${\bf H}_{[h],t,w}$ taken over the set ${\cal H}$ defines the weighted transfer operator
 \begin{equation} \label {H}
\mathbf{H}_{t, w}:=\sum_{h\in \mathcal{H}} 
{\bf H}_{[h],t,w}\, .\end{equation}
This is the main dynamical object of the study, as an extension of the density transformer of the system, obtained  when $(t, w) = (1, 0)$. \\
Due to the additivity of the cost,  and the multiplicativity of the derivative (or its associated mapping $\und h$),  the $k$-th iterate of the operator  ${\bf H}_{t,w}$ has exactly the same expression as the operator itself, with  a sum  that is  now taken over    $\mathcal{H}^k$,
  \begin{equation*}
{\bf H}_{t,w}^k  = \sum_{h \in \mathcal{H} ^k} {\bf H}_{[h], t, w}\, .\end {equation*}
 As in \cite{BaVa}, the quasi-inverse  plays a central role.  Here, we  will deal with the even quasi-inverse
 \begin{equation} 
 \label{Eq:EO} {\bf E}_{t, w} :=\sum_{\substack{k {\rm \, even}\\ { k\ge 2}}} {\bf H}^k_{t,w}= {\bf H}^2_{t,w} (I- {\bf H}^2_{t, w})^{-1}\, .  
 \end{equation}

\medskip
In the present study,  there are two useful functional spaces  on which these operators act, that we now describe. 

\smallskip
\paragraph {\bf \em The space $\A$.}
Generating functions deal with  fixed point $x_h$ of each  branch $h$. As we will see, traces of   operators, {\em as soon as they are well-defined},  provide a  good tool to deal with fixed points.  This is why  we are led to study the operators when they act on spaces of analytic functions.

We consider the disk  ${\cal V}=\{z\in \mathbb C:\, |z-1|<5/4\}$   described in Property $(P3)$ of Section \ref{rhoUNI} and we deal with  the  Banach  space 
  \begin{equation} \label{calA}
 \A:= \{ f: \overline {\cal V} \rightarrow {\mathbb C} \mid f {\rm \  continuous}, \  f|_{\cal V} {\rm \ analytic} \} \,  
 \end{equation} 
 endowed with the sup-norm
 $  ||f||_{\mathcal{V}}:= \sup\{|f(z)| \mid\, z\in \overline {\mathcal{V}}\}.$ This  is a Banach space. With Property $(P3)$, 
we 
associate   with any $h \in {\cal H}^+$ the  function  $\und h$  which is the analytic extension of the function $|h'(x)|$  to the disk ${\cal V}$. 
Then, with  Property $(P3)$ of Section \ref{rhoUNI},  
 the  component weighted transfer operator $  \mathbf{H}_{[h],t, w}$ 
acts on the Banach  space $\A$. Moreover, the equality that holds for  $(t, w)$, with $t = \sigma + i \tau$ and $\nu =  \Re w$, 
\begin{equation} \label{bad}
| \mathbf{H}_{[h], t, w}[f] (z)| =   \exp (\nu  c(h)) \,  |\und h (z)|^\sigma \,  \exp ( \tau \arg \und h(z))\, f\circ h(z)
\end{equation}
shows that   the norm $||{\bf H}_{[h],t, w}||_{\cal V}$  is of exponential growth with respect to $\tau:= \Im t$.

\smallskip
\paragraph{\bf \em The space $\mathcal{C}^1(\mathcal{I})$.}
This is why   $\A$ is not adapted to our study  when  $\tau:= \Im t$ is large, where we expect a polynomial growth with respect to $\tau$.  However, the exponential term $\exp ( \tau \arg \und h(z))$
disappears when $z$ is real. This is why we  also deal   with the Banach space  $\mathcal{C}^1(\mathcal{I})$ of continuously differentiable functions on the unit interval $\mathcal{I}$ with the usual norm $\|f\|_{(1,1)} := \|f\|_0 +  \|f'\|_0$ where  $\|f\|_0 := \sup\{|f(x)| \mid\, x\in {\mathcal{I}}\}$.    In  this  case, 
 the  transfer operator
    ${\bf H}_{[h],t,w}$  involves  the  absolute value of the derivative, satisfies 
\begin{equation} \label{Eq:Hcompo}
{\bf H}_{[h],t,w}[f](x) = e^{w c(h)} \, 
| h'(x)|^t \,  f \circ h(x)\, , 
\end{equation}
and is proven to act on this space.  We will adapt Dolgopyat-Baladi-Vall\'ee results which entail polynomial growth  of the norm  of the operator 
 ${\bf H}_{t, w}$ when $|\Im t|$ becomes large.

 \subsection {First properties  of the operator ${\bf H}_{t, w}$.}\label{Sect:33}

We now  study 
the main properties of the  operator ${\bf H}_{t, w}$, when the pair $(t,w)$ belongs to the sets defined  in \eqref{Eq:Q}, and when it acts either on $\A$
or $\mathcal{C}^1(\mathcal{I})$. We also  study 
the analyticity of the mapping $(t, w) \mapsto {\bf H}_{t, w}$ on each space.

\begin{proposition} \label{AA}

 Consider a digit-cost $c$ of moderate growth 
 and the associated sets $\Gamma(a)$ (defined in \eqref{Eq:Q}).
 The following holds:
 \begin{enumerate} 

\item[$(a)$]  {\rm [Case  $a>1/2$]}.   For any $(t,w)\in \Gamma(a)$,  the operator $\mathbf{H}_{t,w}$ acts  on $\A$ and on $\mathcal{C}^{1}(\mathcal{I})$.  The map $(t,w)\mapsto \mathbf{H}_{t,w}$ is analytic on  $\Gamma(a)$.

\smallskip
\item[$(b)$] {\rm [Case  $a>1$]}.   For any $(t, w) \in \Gamma(a)$,    the  quasi-inverse $(I-{\mathbf H}_{t,w}) ^{-1}$ acts  on $\A$ and on 
$\mathcal{C}^1(\mathcal{I})$. The map $(t,w)\mapsto (I-{\mathbf H}_{t,w}) ^{-1}$ is analytic on  $\Gamma(a)$.

\smallskip 
\item[$(c)$] {\rm [Useful bounds]}. The  bounds \eqref{Hnbound1/2} and \eqref{Hnbound1}  hold on $\Gamma(a)$  for any $a >1/2$.

\end{enumerate}

\end{proposition}

 \begin{proof} 
With  Lemma 2, 
 the following inequality holds
 \begin{equation}  \label{Hnbound1/2}
{\bf H}^n_{ \sigma,   \nu}[{\bf 1}](-1/4)=  \sum_{h \in {\cal H}^n}  e^{|\nu| c(h)} |h'(-1/4)|^{\sigma}\le   K^{ |\nu|} L^{\sigma}  \sum_{h \in {\cal H}^n} |h'(0)|^{\sigma-d |\nu|}\, .
\end{equation}
  With   $z\in \mathcal{V}$,  and  $f \in  \A$,  the  bound
  $$ \left|{\bf H}^n_{t,w }[{f}](z)\right|\le e^{\pi |\tau| }\, {\bf H}^n_{ \sigma,\nu}[{\bf 1}](-1/4) \, ||f||_{\cal V}$$
  holds, and,    with \eqref{Hnbound1/2}, this
 ensures that, for any $(t,w) \in \Gamma(a)$ with $a>1/2$,  the operator  ${\mathbf H}_{t,w}$ acts  on $\A$;  similar arguments, together with Lasota-Yorke bounds (see \cite{BaVa}),  show that ${\bf H}_{t,w}$ acts  on 
$\mathcal{C}^1(\mathcal{I})$.

Moreover,  the bound obtained in  \eqref{Hnbound1/2} entails  the inequality
\begin{equation} \label{Hnbound1} {\bf H}^n_{ \sigma,   \nu}[{\bf 1}](-1/4) 
\le  K^{ |\nu|} \, L^{a+ \sigma} \,   \rho^{(a-1) n}\, \ \hbox{ for any $(t,w) \in \Gamma(a)$, \  for any $n \ge 1$} \, .
\end{equation}
 Then,together with  Property $(P1)$,  and for   $(t, w) \in \Gamma(a)$ with $a>1$, the  quasi-inverse  $(I-{\mathbf H}_{t,w}) ^{-1}$,  and its variants,  acts  on $\A$ and on 
$\mathcal{C}^1(\mathcal{I})$.
\end{proof}

We  now explain  why we restrict ourselves to costs of moderate growth. 
Consider a cost $c$  that is  {\em not} of moderate growth. In this case,  the ratio $c(m) /\log m$ is not bounded, and,   for any $\nu >0$, and any $\epsilon >0$, 
the inequality   $\nu c(m) >(1+\epsilon) \log m$  holds  for a subsequence of integers $m$; the series  ${\bf H}_{1,\nu}[1](0) $  is thus  divergent   for  any $\nu >0$. Then, there does not exist 
any neighborhood ${\cal S}\times \mathcal{W}$ of $(1,0)$  on which the operator ${\bf H}_{t,w}$ is  well-defined.   
However,  the existence of such  a  neighborhood is crucial  in  all our study,  notably 
for the Implicit Function Theorem used in Theorem \ref{Thm:mainconstants} $(iii)$, 
Dolgopyat's bounds  (Theorem \ref{C}) and Quasi-Powers Theorem (Theorem \ref{E}).

\subsection{Role of each functional space} \label{gen-pres}

 The expression of $\alpha(h)$ given in (\ref{alpha})  in terms of derivatives entails  an alternative  expression for $Y_k(t,w)$ (introduced in Eq.\,\eqref{Ysw}),  that  involves the component  operators   
 ${\bf H}_{[h],t,w}$  defined in \eqref{Eq:Hcompo} as 
 $$Y_k(t,w) = \sum_{h \in {\cal H}^k}  {\bf H}_{[h],t,w}[1](x_h)\, .$$
Here,  each component operator 
is evaluated  at a point  that depends on  $h$, namely the fixed point $x_h$ of $h$. This is the main difference with previous studies (rational trajectories for instance)  where  the same ``$x$''  is used for each component  operator: there, the analog  of   $Y_k(t, w)$ is expressed with the $k$-th  iterate of ${\bf H}_{t,w}$. 

 \smallskip
\paragraph 
{\bf \em   Far from the real axis.}  In Section \ref{Sec:Dolgo}, we  deal with  the  space  ${\cal C}^1 ({\cal I})$, where  there are  interesting Dolgopyat-Baladi-Vall\'ee  bounds    on the $k$-iterates of the operator for  large $|\Im s| $ (established by Dolgopyat in \cite{Do}, and extended by Baladi and Vall\'ee in \cite{BaVa}). We   thus need to  relate   (but not exactly)
\begin{align*}
 Y_k(s,w) \quad \hbox{ and  various} \quad \mathbf{H}^\ell_{s,w}[f](x) \quad  \hbox{for  $\ell \le k$, and some pairs $(f,x)$}\, ,
\end{align*}
 when the operator  acts on the functional space ${\cal C}^1({\cal I})$.

\smallskip 
\paragraph 
{\bf \em  Near the real axis, or in the intermediate region.}  Here,  we need exact alternative expressions, and  we  deal with the space $\A$ defined in \eqref{calA}. In this space, as we will show  in the next Section,   the trace 
of each   weighted   component transfer operator is well-defined and admits expression in terms of $\alpha(h)$. Due to Proposition \ref{Pro:Z-P} that relates $P(s,w)$ and $E(s,w)$,  we  are then led to  introduce the even  quasi-inverse ${\bf E}_{t, w}$ (with $t=s/2$),  already described in \eqref{Eq:EO}, together with its trace.  
  We then obtain  an alternative expression for the series $E(s, w)$,  given in Proposition  \ref{proptraces1}. 
 This will be useful to  find the main singularities of $E(s, w)$ near the real axis, described in  Proposition \ref{Pro:Z-E}  of Section \ref{Sec:near} and  study $E(s, w)$ in the intermediary region (see  Proposition  \ref{Prop:mid} in Section \ref{Sec:Middle}).

     \subsection{Trace of the even quasi-inverse.} \label {Sect:traces}
   
 In \cite{Gr1}, Grothendieck proved   that   the usual formulae that hold for the trace of a matrix are  also valid in a precise framework that is described in the Annex. As it is proven there, the space  $(\A,\|\,\|_{\cal V})$, defined in \eqref{calA} provides an instance of such a framework. 

First, as it is proven in Proposition \ref{proptraces}$(a)$, 
the trace of each component operator ${\bf H}_{[h],t,w}$ is well defined there, and  is expressed in terms of $\alpha(h)=|h'(x_h)|^{1/2}$ and  depth $|h|$ of the branch $h$,   
$${\rm Tr\, }\mathbf{H}_{[h],t,w}= {\alpha(h)^{2t}}\frac{  \exp(wc(h))} {1-(-1)^{|h|} \alpha(h)^2}\, .$$
 Then,  each component of the series $E(s, w)$, relative to a branch $h$ of even depth,   can be written as  
$$ \alpha(h)^{2t} \exp(wc(h)) =  {\rm Tr\, }\mathbf{H}_{[h],t,w} -    {\rm Tr\, }\mathbf{H}_{[h], t +1,w}\, .$$
Second, for $(t,w)\in\Gamma(a)$ with $a>1$,   and as it is proven in Proposition \ref{proptraces}$(c)$,  the trace of the even quasi-inverse ${\bf E}_{t,w}$ defined in \eqref{Eq:EO}   is also well-defined and equals the  sum of the traces  ${\rm Tr\, }{\bf H}_{[h],t,w}$ taken  over all the inverse branches $h\in \mathcal{H}$ with  even depth.  
Then,  introducing  
the Dirichlet series $\Ed(s,w)$,  
\begin{equation}\label{Eq:Ed}
\Ed(s,w):={\rm Tr\, }  {\bf E}_{(s/2), w} \, , 
\end{equation}
the  following relation  holds, 
\begin{equation} \label{Eq:ZE} 
E(s, w) = \Ed(s,w)-\Ed(s+2, w) \, .
\end{equation}
 The next proposition  relates the two series $E(s,w)$ and $\Ed(s,w)$.

  \begin{proposition} \label{proptraces1} 
The  Dirichlet series $E(s,w)$ and  $\Ed(s,w)$  are analytically equivalent. 
 \end{proposition}
 
\begin{proof} \ \ 
Proposition \ref{proptraces}  provides in  \eqref{trqi2}   the explicit expression of   $\Ed(s,w)$.   The maximal value for $h \mapsto \alpha(h)$  over ${\cal H}^+$ is obtained for $h= h_{1}$ and equals $\rho^2$ with $\rho$ defined  in Property $(P2)$. Letting   $K:= 1-\rho^2$,  this entails the bound,
$$|\Ed(s,w)|\le (1/K)  Y(\sigma/2, |\nu|), \qquad  \hbox{ for $ \sigma=  \Re s, \nu=  \Re w$} \, .$$

\smallskip {\em Condition $(a)$.} With bound \eqref{bound1}, the series $ \Ed(s,w)$ satisfies Condition $(a)$ when $(s/2, w)$ belongs to $\Gamma(a)$ with $a>1$.

\smallskip
{\em Condition $(b)$.}
The difference $E(s,w)-\Ed(s,w)$ is equal to $\Ed(s+2,w)$, is a shift to the left of $\Ed(s,w)$.  This  proves with  bound \eqref{bound1}, that  
$\Ed(s+2,w)$ is analytic and uniformly bounded as soon as  the pair $((s+2)/2, w) $ belongs to $\Gamma(a)$ with $a>1$. This means  that the pair $(s/2, w) $ belongs to  $\Gamma(a)$ with $a>1/2$. Then, Condition $(b)$  holds  under the same conditions as in Proposition \ref{Pro:Z-P}. 

 \end{proof}

 \subsection{Study of $E(s, w)$ near the real axis.}  \label{Sec:near}
 
 With Proposition \ref{proptraces1}, we are then led  to study  the  trace $\und E(s, w) $ of the even quasi-inverse. 
Proposition \ref{proptraces} proves  that the map 
$(t,w)\mapsto {{\rm Tr\, }}{\bf E}_{t, w}$ is analytic on $\Gamma(a)$ with $a>1$.   
We now study  an analytic continuation of this map on a neighborhood of $(1,0)$. This will be done in the Annex,
notably in Sections \ref{Sec:SpecOp}, \ref{Sec:SpecTr}, \ref{Sec:SpecCo}. 
Section \ref{Sec:SpecOp}   describes the spectral  decomposition of  the operators ${\bf H}_{t, w}$ near the real axis, and relates it to the spectral decomposition of the even quasi-inverse ${\bf E}_{t, w}$.  We recall here the main steps: 

For real  pairs $(t, w) \in \Gamma(a)$ (with $a>1/2$), the operator ${\bf H}_{t, w} $  is compact and satisfies  strong positive properties that  entail the existence of  dominant spectral objects, in the same vein as the Perron-Frobenius  properties. 
For $(t, w) \in \Gamma(a)$, with $a>1/2$, 
the dominant spectral objects defined for real pairs may be  extended with analytic perturbation of the dominant part of the spectrum  when the pair $(t, w)$ is close to a  real pair. More precisely, 
the Annex introduces in  \eqref{Eq:Sp} the  
spectral decomposition
 of the operator ${\mathbf H}_{t,w}$ that holds  when $(t,w)$  is in a neighborhood of $(1,0)$, 
 $$
 \mathbf{H}_{t,w}=\lambda(t,w)\mathbf{P}_{t,w}+\mathbf{N}_{t,w}\, , 
$$ 
 where  $\lambda(t, w)$ is the dominant eigenvalue, and the spectral radius of $\mathbf{N}_{t,w}$ is less than 1. 
This decomposition gives  rise  
to  the spectral decomposition of the even quasi-inverse given in \eqref{decE} 
$${\bf E}_{t,w}= {\bf H}_{t,w}^2 (I-{\bf H}^2_{t,w})^ {-1}=  \frac{\lambda^2(t, w)}{1-\lambda^2(t,w)} \mathbf{P}_{t,w}+ {\bf N}^2_{t, w}(I-\mathbf{N}^2_{t,w})^ {-1} \, .$$

 Now,    Section \ref{Sec:SpecTr}  of the  Annex studies the trace of this quasi-inverse. It  explains  how the previous decomposition   gives rise to the  spectral decomposition  of  ${\rm Tr\, }{\bf E}_{t,w}$ given in \eqref{dectrE}, 
$${\rm Tr \, } {\bf E}_{t, w}=  
 \frac{\lambda^2(t, w)}{1-\lambda^2(t,w)} +  {\rm Tr \, }{\bf N}^2_{t, w}(I-\mathbf{N}^2_{t,w})^ {-1} \, ,$$
  and leads  to the analytic continuation of $(t,w)\mapsto {\bf E}_{t, w}$.  Moreover, Section 
  \ref{Sec:SpecCo} of the Annex describes the constants that are involved in this decomposition. 

Using Proposition  \ref{proptraces1}, we now transfer these results 
to the generating function $E(s, w)$  when $(s, w)$ is close to the point $(2,0)$.

\begin{proposition} \label{Pro:Z-E}   There is a domain
${\cal T}_2  :=  {\cal S}_2 \times {\cal W}_2$,  formed with a neighborhood ${\cal W}_2$ of 0, and a  rectangle ${\cal S}_2 :=  \{s=\sigma+ i\tau \mid  |\Re s-2| < \delta_2, \,  |\tau|<\tau_2 \}$   where the following holds:  

\begin{itemize} 

\item[$(a)$] For each $w \in {\cal  W}_2$, the mapping $ s\mapsto   E(s, w)$  is meromorphic  on ${\cal S}_2$.   

\item[$(b)$]   It   has a unique pole simple on $ {\cal S}_2$, located at $s_w  :=  2 \sigma(w)$ where $\sigma(w) $ is  defined by the equation $\lambda(\sigma(w), w) = 1$, 
$\sigma(0)=1$.  

\item[$(c)$]  The residue  $v(w)$ of $s \mapsto E(s, w)$  at $s =s_w$ equals $   -1/ \lambda'_t (\sigma(w), w)$. 
The product $(s-s_w) E(s, w) $ is bounded on  $\overline {\cal T}_2$. 
\end{itemize}
\end{proposition}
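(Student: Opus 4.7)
The plan is to exploit the decomposition $Z(s,w) = {\rm Tr\,}\mathbf{E}_{s/2, w} + B(s,w)$ from (\ref{Eq:ZE}), treating ${\rm Tr\,}\mathbf{E}_{s/2, w}$ as the singular part and showing that $B(s,w)$ contributes only a bounded analytic piece on the target rectangle $\mathcal{S}_2$ around $s=2$. The operator-level spectral theory is already supplied by Proposition \ref{B}, so the real task is to translate those operator-valued statements into scalar statements about $Z(s,w)$.

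First I would pick $\mathcal{S}_2, \mathcal{W}_2$ so that the change of variable $s \mapsto s/2$ sends $\mathcal{S}_2 \times \mathcal{W}_2$ into the domain $\mathcal{T}_1$ of Proposition \ref{B}; concretely, $\delta_2 = 2\delta_1$, $\tau_2 = 2\tau_1$, and $\mathcal{W}_2 \subset \mathcal{W}_1$ (possibly shrunk further for the argument on $B$ below). By Proposition \ref{B}(d), $\mathbf{E}_{t,w}$ has a unique simple pole on $\mathcal{S}_1$ at $t = \sigma(w)$ with residue $\tfrac{1}{2}(-1/\lambda'_s(\sigma(w),w))\mathbf{P}_{\sigma(w),w}$. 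The chain rule doubles this residue under $t=s/2$, so $\mathbf{E}_{s/2,w}$ has a simple pole at $s = 2\sigma(w)$ with operator-valued residue $(-1/\lambda'_s(\sigma(w),w))\mathbf{P}_{\sigma(w),w}$. Taking traces and using that $\mathbf{P}_{\sigma(w),w}$ is the rank-one projector defined in (\ref{P}), with trace $\mu_{\sigma(w),w}[f_{\sigma(w),w}] = 1$ by the normalization of Proposition \ref{B}(a), yields the claimed scalar residue $v(w) = -1/\lambda'_s(\sigma(w),w)$.

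Next I would verify that $B(s,w) = {\rm Tr\,}\mathbf{E}_{(s/2)+1,w} + {\rm Tr\,}\mathbf{O}_{s,w} + {\rm Tr\,}\mathbf{O}_{s+1,w}$ is analytic and uniformly bounded on $\overline{\mathcal{T}_2}$. The point is that for $(s,w)$ near $(2,0)$, each shifted argument $(s/2)+1$, $s$, $s+1$ has real part strictly greater than $1$, and since $\lambda(1,0)=1$ with $-\lambda'_s(1,0) = \mathcal{E}>0$ (Proposition \ref{B}(b)), continuity forces $|\lambda(t,w)|$ to stay strictly less than $1$ at each of these three arguments on a small enough complex neighborhood. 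Consequently $(I-\mathbf{H}_{t,w}^2)^{-1}$ is analytic there, the three quasi-inverses are of trace class by Proposition \ref{A}, and their traces sum to a bounded analytic function $B(s,w)$ on $\overline{\mathcal{T}_2}$ (compactness of the closure gives the uniform bound).

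Finally, for the boundedness of $(s-s_w)Z(s,w)$, I would combine the two pieces via the spectral decomposition (\ref{decE}). The rank-one contribution to ${\rm Tr\,}\mathbf{E}_{s/2,w}$ is $\lambda^2(s/2,w)/(1-\lambda^2(s/2,w))$, and Proposition \ref{B}(d) ensures $(t-\sigma(w))\lambda^2(t,w)/(1-\lambda^2(t,w))$ is bounded on $\overline{\mathcal{T}_1}$; since $s-s_w = 2(s/2-\sigma(w))$, multiplying by $(s-s_w)$ after the change of variables gives a bounded contribution. The remainder ${\rm Tr\,}\mathbf{N}_{s/2,w}^2(I-\mathbf{N}_{s/2,w}^2)^{-1}$ is analytic on $\overline{\mathcal{T}_2}$, and $B(s,w)$ has already been bounded. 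The delicate ingredient is the rank-one trace identity ${\rm Tr\,}\mathbf{P}_{\sigma(w),w} = 1$, which is what converts the operator residue into the scalar residue in $(c)$; everything else is essentially a careful bookkeeping of the already-proven spectral facts.
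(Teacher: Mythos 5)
Your overall route is the one taken in the paper: you use the decomposition $Z(s,w)={\rm Tr\,}{\bf E}_{s/2,w}+B(s,w)$ of \eqref{Eq:ZE}, you obtain the scalar residue from the operator residue \eqref{Eq:ResE} via the rank-one identity ${\rm Tr\,}{\bf P}_{\sigma(w),w}=\mu_{\sigma(w),w}[f_{\sigma(w),w}]=1$ together with the factor $2$ coming from the change of variable $t=s/2$, and you get the boundedness of $(s-s_w)Z(s,w)$ from the decomposition \eqref{decE} and the last assertion of Proposition~\ref{B}. All of this is correct and coincides with the paper's proof.

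The one step that does not hold as you wrote it is the treatment of $B(s,w)$ from \eqref{Eq:B}. You claim that, since $\lambda(1,0)=1$ and $-\lambda_s'(1,0)={\cal E}>0$, ``continuity forces'' $|\lambda(t,w)|<1$ at the three shifted arguments $t=(s/2)+1,\ s,\ s+1$. But for $(s,w)$ near $(2,0)$ these arguments lie near $2$, $2$ and $3$, nowhere close to $(1,0)$: the local data $\lambda(1,0)=1$, $\lambda_s'(1,0)<0$ only controls $\lambda$ in a small neighbourhood of $(1,0)$, and Proposition~\ref{B} defines the dominant eigenvalue $\lambda(t,w)$ (and the spectral decomposition you would need to pass from an eigenvalue estimate to boundedness of the quasi-inverse) only on the neighbourhood ${\cal T}_1$ of $(1,0)$. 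To argue along your lines you would need, e.g., the strict decrease of $t\mapsto\lambda(t,0)$ on the real axis plus a perturbation argument at $(2,0)$ and $(3,0)$, none of which you have established. The correct and simpler argument --- the one used in the paper --- avoids eigenvalues altogether: after a possible contraction of $\delta_2$ and of ${\cal W}_2$, every pair $(u,w)$ with $u\in\{(s/2)+1,\ s,\ s+1\}$ and $(s,w)\in{\cal T}_2$ belongs to a region $\Gamma(a)$ with $a>1$, where the bound \eqref{Hnbound} (Proposition~\ref{AA}) makes the Neumann series of ${\bf H}^2_{u,w}$ converge geometrically; combined with the trace properties of Proposition~\ref{A}, this gives directly that the three traces occurring in $B(s,w)$ are analytic in $(s,w)$ and uniformly bounded on $\overline{\cal T}_2$. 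With this repair, your argument is exactly the paper's.
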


\begin{proof}   
 Sections \ref{Sec:SpecTr} and \ref{Sec:SpecCo}
  describe the analog of   Properties $(a)$, $(b)$ and $(c)$   for ${\rm Tr \, } {\bf E}_{t, w} $ and introduce  the domains ${\cal S}_1$ and ${\cal W}_1$ and their product ${\cal T}_1 :={\cal S}_1 \times {\cal W}_1$.   We  now return from $t = s/2$ to $s$ and  the change $(s/2)\mapsto s$ brings the factor 2 in the residue.  It transforms the domain $ {\cal S}_1$ into its homothetic $ {\cal S}_2:= 2 {\cal S}_1$ and we let ${\cal T}_2 :={\cal S}_2 \times {\cal W}_1$. 
 We now use   the  property   of Section \ref{Sec:SpecCo}  which  entails  that the product $(s-s_w) \, {\rm Tr\, } {\bf E}_{s/2, w}=(s-s_w)\Ed(s,w)$ is bounded on $\overline {\cal T}_2$.  Then,  Properties $(a)$, $(b)$ and $(c)$  are proven for $\Ed(s, w)$.  Finally,  with  Proposition \ref{proptraces1},  we return to $E(s, w)$ and  conclude the proof. 
  \end{proof}

\subsection{In the middle} \label{Sec:Middle}

This section  studies the behavior of $E(s, w)$  when  $s$  is in the intermediary region, and belongs to the union of two  rectangles  $\{s \mid |\Re s -2| < \delta, \tau_2 \le  |\tau| \le  \tau_4\}$.   The bound $\tau_2$  comes from   the domain ${\cal T}_2$ of Proposition  \ref{Pro:Z-E}, and the bound $\tau_4$ will be  provided  later on by  the domain ${\cal T}_4$ of  Theorem  \ref{Thm:Far} described in the next section.  
The behavior   of the mapping $(t,w)\mapsto {\rm Tr\, } {\bf E}_{t, w}$ in the intermediary region is related to the  behavior of the  even quasi-inverse ${\bf E}_{t, w}$ there,  which is itself   based on the following  spectral property of the plain operator ${\bf H}_t := {\bf H}_{t, 0}$  when $t$ belongs to the vertical line $\Re s = 1$ (see, for instance, the papers by Faivre \cite{Fa}  or Vall\'ee \cite{Va98}).

\begin{classicalProp}{Proposition}\label{D}
 {\em
 On the  punctured vertical line   $\{  \Re t = 1, t \not = 0\}$,  the spectral radius $R(t)$ of the   operator $\mathbf{H}_t$   (when acting on $\A$) is strictly less than 1. }
\end{classicalProp}

  Using analytic  perturbation of parameters $(t, w)$,  the following  result   proves  that    $\Ed(s,w)$, and thus $E(s,w)$, remains bounded in the intermediary region. 
 
\begin{proposition}
 \label{Prop:mid}
 For any pair $(\tau_2, \tau_4)$   with $0<\tau_2<\tau_4$, there exists a domain ${\cal T}_3 := {\cal S}_3 \times {\cal W}_3$ formed with 
 a neighborhood $\mathcal{W}_3$ of $w=0$,  and  a union ${\cal S}_3$ of two rectangles,  
 ${\cal S}_3 := \{s \mid   |\Re s -2|\le \delta_3,\, \tau_2\le |\Im s|\le \tau_4 \} $  together with  a bound  $M_3>0$,   for which  the series   
 $E(s, w)$ satisfies $|E(s,w)|\le M_3$ on the 
domain $\overline {\cal T}_3 $. 
\end{proposition}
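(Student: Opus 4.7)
The plan is to use the decomposition \eqref{Eq:ZE}--\eqref{Eq:B},
$$Z(s,w) = {\rm Tr}\, {\bf E}_{s/2, w} + {\rm Tr}\, {\bf E}_{(s/2)+1, w} + {\rm Tr}\, {\bf O}_{s, w} + {\rm Tr}\, {\bf O}_{s+1, w},$$
and to bound each of the four traces separately on the intermediary domain $\overline{\cal T}_3$. Since $|\Im s|$ is bounded away from both $0$ and infinity and $|\Re s - 2|$ is small, the only one of the four arguments $s/2,\ (s/2)+1,\ s,\ s+1$ whose real part sits near the critical line $\Re t = 1$ is $t = s/2$. The three others have real parts at least $2 - \delta_3/2$; for $\delta_3$ and ${\cal W}_3$ small enough they lie in some $\Gamma(a)$ with $a>1$, so Proposition~\ref{AA}(b) (applied to ${\bf H}^2$ via the geometric bound \eqref{Hnbound}) shows that the corresponding quasi-inverses $(I - {\bf H}^2_{u,w})^{-1}$ act boundedly on $\A$, that ${\bf E}_{u,w}$ and ${\bf O}_{u,w}$ are of trace class by the trace framework of Section~\ref{Sec:Trace}, and that their traces are analytic and uniformly bounded on the closed domain. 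This handles three of the four terms.

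The heart of the argument is the control of ${\rm Tr}\, {\bf E}_{s/2, w}$. First I would apply Proposition~\ref{D} on the compact set
$$K_0 := \{\,t = 1 + i\tau \mid \tau_2/2 \le |\tau| \le \tau_4/2\,\},$$
which yields a uniform bound $R(t) \le r_0 < 1$ on $K_0$ by compactness; consequently the spectral radius of ${\bf H}^2_{t,0}$ on $\A$ is at most $r_0^2 < 1$ throughout $K_0$. Using the analyticity of $(t,w) \mapsto {\bf H}_{t,w}$ on $\A$ (Proposition~\ref{AA}(a)) and the compactness of ${\bf H}_{t,w}$ (Proposition~\ref{A}(a)), the spectral radius is upper semicontinuous, so around each $t_0 \in K_0$ there is a neighborhood $U_{t_0} \times V_{t_0}$ of $(t_0,0)$ on which the spectral radius of ${\bf H}^2_{t,w}$ stays bounded by some $r_1 < 1$. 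Extracting a finite subcover of $K_0$ produces a tube
$$\{\,t \mid |\Re t - 1| \le \delta_3/2,\ \tau_2/2 \le |\Im t| \le \tau_4/2\,\} \times {\cal W}_3$$
on which $(I - {\bf H}^2_{t,w})^{-1}$ is holomorphic with operator norm bounded by $1/(1-r_1)$, the operator ${\bf E}_{t,w} = {\bf H}^2_{t,w}(I - {\bf H}^2_{t,w})^{-1}$ is of trace class, and its trace is analytic and uniformly bounded. Rescaling via $t = s/2$ translates this into the desired uniform bound on ${\rm Tr}\, {\bf E}_{s/2, w}$ for $(s,w) \in \overline{\cal T}_3$; adding the four uniform bounds yields the constant $M_3$.

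The main technical obstacle is the passage from the pointwise real-line spectral radius bound of Proposition~\ref{D} (a statement at $w = 0$, on $\Re t = 1$) to a uniform, trace-class, analytic bound on a full two-dimensional complex tube in $(t,w)$. Two ingredients are required: upper semicontinuity of the spectral radius under norm perturbation of compact operators on $\A$, and a compactness-of-$K_0$ argument that converts the ensuing local neighborhoods into a single product tube with strictly positive width $\delta_3$ in $\Re t$ and a genuine complex neighborhood ${\cal W}_3$ of $w=0$. Both are standard, but they must be combined with the nuclearity framework of Section~\ref{Sec:Trace} to ensure that not only the operator norms but also the traces remain uniformly bounded and jointly analytic in $(s,w)$ throughout $\overline{\cal T}_3$.
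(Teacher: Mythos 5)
Your proposal follows essentially the same route as the paper: the same decomposition $Z(s,w) = {\rm Tr}\,{\bf E}_{s/2,w} + B(s,w)$ with the three terms of $B$ controlled through $\Gamma(a)$, $a>1$, and the main term ${\rm Tr}\,{\bf E}_{s/2,w}$ controlled by Proposition~\ref{D} plus spectral perturbation on a compact tube around the segments of $\Re t = 1$, followed by the homothety $t = s/2$ (the paper compresses your upper-semicontinuity-plus-finite-subcover argument into the phrase ``perturbation theory of finite parts of the spectrum''). One small inaccuracy: a spectral radius bound $r_1 < 1$ alone does not yield the Neumann-series estimate $\|(I-{\bf H}^2_{t,w})^{-1}\| \le 1/(1-r_1)$ (that would require a bound on the operator norm of ${\bf H}^2_{t,w}$), but the uniform bound you need still follows from analyticity, hence continuity, of the resolvent and of the trace on the compact closure of your tube, so the conclusion stands.
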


\begin{proof}  We first study the function  $(t, w) \mapsto  {\rm Tr}\, \mathbf{E}_{t,w}. $ 
Proposition \ref{D} together perturbation theory of finite parts of the spectrum imply that,   for any two fixed positive numbers,  say   $0 < \tau_2< \tau_4$, there is a   domain ${\cal T}_3:={\cal R}_3 \times {\cal W}_3$,  built with  a neighborhood ${\cal W}_3$ of $w = 0$ and the union ${\cal R}_3 $ of two rectangles of the form 

\vskip 0.1cm
\centerline{${\cal R}_3:= \{ t \mid |\Re t -1| \le ( \delta_3/2) ,  (\tau_2/2) \le |\tau |  \le  (\tau_4/2) \}$}
  so that  the spectral radius $R(
t,w)$  of the operator ${\bf H}_{t, w}$ is strictly less than 1 on the closure of $\overline{\cal T}_3$. 

We may suppose that $\overline {\cal T}_3 \subset \Gamma(a)$ for $a>3/4$. Then, on $\overline {\cal T}_3 $, Proposition \ref{proptraces} $(c)$ proves that the trace ${\rm Tr} \, {\bf E}_{t, w}$ is given by
the absolutely convergent series of general term ${\rm Tr\, } {\bf H}^k_{t, w}.$
Now, in Proposition \ref{proptraces} $(b)$, it is proved that each map $(t,w)\mapsto {\rm Tr\, } {\bf H}^k_{t, w}$ is an analytic map on $\Gamma(a)$ with $a>3/4$. Then, on the open domain ${\cal T}_3$, the map $(t,w)\mapsto {\rm Tr} \, {\bf E}_{t, w}$
is analytic, and it is bounded on  $\overline {\cal T}_3$. 

An  homothety  by a factor of $2$ provides  a neighborhood ${\cal S}_3 := 2 {\cal R}_3$   
so that  $\Ed (s,w)$    is   bounded on  $\overline {\cal T}_3 := \overline{\cal S}_3 \times \overline  {\cal W}_3$ and analytic on its interior. This neighborhood $\overline {\cal T}_3 $ can be chosen so that 
${\cal T}_3\subset \mathcal{S}_0\times\mathcal{W}_0$ from Proposition \ref{proptraces1} and then, 
$E(s,w)$ is also bounded and analytic on ${\cal T}_3$.

  \end{proof}

\subsection {Far from the real axis}\label{Sec:Dolgo}

We  now  study  the series $E(s, w)$  when $w$ is close to 0, and $s$ in a vertical strip  close to the vertical line $\Re s = 2$ but now with large   values of $|\Im s|$.  
We wish to exhibit a polynomial growth of the mapping $s\mapsto E(s, w)$  for  large $|\Im s|$.   We have already explained  in Section \ref{Sect:WTO} why the space $\A$ is not adapted to this task. We thus  {\em do not} use the decomposition  \eqref{Eq:ZE} and directly deal with $E(s, w)$.  We will then compare   directly the series 
$E(s, w)$ and  iterates ${\bf H}^k_{t, w}$  of the transfer operator when acting on ${\cal C}^1({\cal I})$,  for $t = s/2$  with a real part  close to $1$. 

\smallskip
The behavior  of  the iterates   ${\bf H}^k_{t}$ of the (unweighted) transfer operator  $\mathbf{H}_{t}$  in a vertical strip near $\Re t = 1$  was studied by Dolgopyat for  a Markovian dynamical system with a  finite number of branches. When such a system   satisfies the UNI  Property  (recalled  as  Property $(P5)$ in Section \ref{rhoUNI}), Dolgopyat exhibits bounds on the    iterates   ${\bf H}^k_{t}$, which prove that the (plain) quasi-inverse $(I-{\bf H}_{t})^{-1}$ is analytic with polynomial growth.    In \cite{BaVa}, Baladi and Vall\'ee extended his work to dynamical systems with an infinite number of branches, and to weighted operators (associated with a cost of moderate growth).  Dolgopyat dealt with the  Banach space 
${\cal C}^1({\cal I})$,  
 introduced a family of norms $\|\ \|_{(1,\tau)}$ on the space $\mathcal{ C}^1( \mathcal {I} )$ indexed by the real parameter  $\tau\not = 0$,
 $$\|f\|_{(1,\tau)  }:= \|f\|_0 + \frac 1 {|\tau|}  \|f'\|_0 \, , $$ and  obtained estimates on the norms
$\|{\bf H}^k_{t,w}\|_{(1,\tau)}$   for  {
$\tau = \Im t$}. 
The next theorem  describes  the Dolgopyat-Baladi-Vall\'ee estimates, as they are stated in \cite{BaVa}. 

\begin{classicalThm}{Theorem}{\rm  [Dolgopyat-Baladi-Vall\'ee estimates].}\label{C}
 {\em 
  There exist a   domain ${\cal T}_5 :={\cal S}_5 \times {\cal W}_5 $ formed with a neighborhood ${\cal W}_5$ of $w=0$,  and a unbounded rectangle $ {\cal S}_5:=   \{s=\sigma +i\tau  \mid    |\sigma-1| \le \delta_5, \,  |\tau|> \tau_5 \}$,   an exponent $\xi_5 >0$,  a contraction constant $\gamma_5 <1$, and a bound $M_5$, such that, for any $(s, w)\in {\cal T}_5$, the norm $(1, \tau)$ of the $k$-th iterate of the operator ${\bf H}_{t,w}$ satisfies, for {
  $\tau = \Im t$},  
  \begin{equation} \label{Dolgo} 
 \|\mathbf{H}_{t,w}^k\|_{(1,\tau)}\le M_5 \cdot |\tau|^{\xi_5}\cdot \gamma_5^k, \qquad \hbox {for $k \ge 1$} \, .
 \end{equation}
 For any  $w\in \mathcal{W}_5$, the quasi-inverse $t \mapsto (I-\mathbf{H}_{t,w})^{-1}$   is analytic  on ${\cal S}_5$ and has polynomial growth for $|\Im t | \to \infty$. 
 }
\end{classicalThm}

We now study the similar bounds for the series $Y_k(t, w)$ that are the components of the series $E(s, w)$, (with $t= s/2$), 
$$  Y_k(t, w) =   \sum_{h \in {\cal H}^k} \exp (w c(h)) \,  |h'(x_h)|^{t} \, .$$ 
 We  precisely relate $  Y_k(t, w)$  with the transfer operator ${\bf H}^k_{t, w}$, and  
   prove, in the next Theorem \ref{Thm:Far}, that $E (s, w)$  is analytic and of polynomial growth in a vertical strip of the form $|\Re s -2|\le \delta_0$ and $|\Im s |\ge \tau_0$ for  some small enough  $\delta_0 >0$ and large enough $\tau_0>0$.

\begin{theorem} \label{Thm:Far}
There exist a   domain ${\cal T}_4 :={\cal S}_4 \times {\cal W}_4 $ formed with a neighborhood ${\cal W}_4$ of $w=0$,  and a unbounded rectangle $ {\cal S}_4:=  \{s=\sigma +i\tau  \mid   |\sigma-2| \le \delta_4,  |\tau|> \tau_4  \}$ with $\tau_4 \ge 1$,  an exponent $\xi_4 >0$,  a contraction constant $\gamma_4 <1$, and a bound $M_4$, such that, for any 
     $w\in \mathcal{W}_4$,  $ s \mapsto E(s, w)$   is analytic  on ${\cal S}_4$, and  the  inequality  holds, 
   $$|E(s, w)| \le   M_4 \cdot   |\tau|^{\xi_4} \qquad \hbox {for any $(s, w) \in {\cal T}_4$}\, .$$
  \end{theorem}

 \begin{proof}  
 This proof is an extension of the proof given by Pollicott and Sharp \cite {PoSh}.  The authors of \cite {PoSh}  use  an  idea of  Ruelle and relate  the zeta series to the transfer operator,  with two restrictions:  they  only  consider the  unweighted case  where the dynamical system has a finite number of branches.  We here show that  their proof  extends to a weighted transfer operator with an infinite denumerable set of branches and then relate the weighted zeta series to the weighted transfer operator.   
 
 We consider the Dolgopyat domain ${\cal T}_5 = {\cal S}_5 \times {\cal W}_5$ of Theorem \ref{C},    together with a  pair $(s, w)$ with $w \in {\cal W}_5$  and $s$ which belongs to the homothetic $2{\cal S}_5$.    When $s$ belongs to the homothetic $2{\cal S}_5$, the complex $t= s/2$ belongs to ${\cal S}_5$, and we may apply the Dolgopyat bounds to the iterates ${\bf H}^k_{t, w}$. 

\smallskip
{\bf \em General strategy.} Due to the definition of  $E(s,w)$ 
given in  \eqref{Esw}, we deal with $Y_k(t,w)$ with $t = s/2$.     
We let $\sigma := \Re t =  (1/2) \, \Re s;\,  \tau :=  (1/2)\,  \Im s; \, \nu := \Re w$.

We associate  with each  pair  $(h, \ell)$ with $|h|  \le \ell$,  the function 
 \begin{equation} \label{Fhell}
    F_{h} ^{(\ell)} :=  {\bf H}^{\ell}_{t,w} [ {\bf 1}_{{\cal I}_h}] = \sum_{g \in {\cal H}^\ell}  {\bf H}_{[g], t,w}    [{\bf 1}_{{\cal I}_h}] \, .
   \end{equation}
  Denote by $m$ the depth $|h|$ of $h$.   As ${\bf H}_{[g], t, w}$ is a component operator which deals with the LFT $g$, 
    the function ${\bf H}_{[g], t, w}    [   {\bf 1}_{{\cal I}_h}]$  involves  the function $  {\bf 1}_{{\cal I}_h}\circ g $ and 
 there are two cases according  as  $g$ begins with $h$ or not: 
 \begin{itemize}  

\item[$(i)$] if  there exists $u$ for which $g  = h \circ u$,  then    ${\bf H}_{[g], t,w}[  {\bf 1}_{{\cal I}_h}] = {\bf H}_{[h \circ u],t,w}[{\bf 1} ] $; 

\item[$(ii)$]  if $g$ is not written as $g  = h \circ u $  with $u \in {\cal H}^{\ell-m}$,  then $ {\bf H}_{[g],t, w}[ {\bf 1}_{{\cal I}_h}] = 0$. 
\end{itemize} 
Finally, in all the cases,  this leads to another expression  for any  function    $F_{h} ^{(\ell)}$ defined in \eqref{Fhell} and relative to $h \in {\cal H}^m$, 
\begin{equation} \label{Fhell1}
 F_{h} ^{(\ell)} :=   {\bf H}^{\ell}_{t,w} [ {\bf 1}_{{\cal I}_h}] =   \sum_{u \in {\cal H}^{\ell-m} } {\bf H}_{[h \circ u], t, w}  [ {\bf 1}] =
 {\bf H}^{\ell-m}_{t,w} [ F_{h} ^{(m)}]\, .
 \end{equation}
 This entails, with distorsion property and Lasota-Yorke bounds (see \cite{BaVa}), the  useful inequality 
 \begin{equation} \label {Fhell2}   \sum _{h \in {\cal H}^m} || F_{h} ^{(\ell)} ||_{(1, 1)} \le   {\bf H}^{m}_{\sigma, \nu}[1](-1/4) \, .
 \end{equation}
 
 Our object of interest     $Y_k (t, w)$ is written   in terms of these $F_{h}^{(k)}$,  namely, 
   $$   Y_k (t, w) =  \sum_{h \in {\cal H}^k} F_{h}^{(k)} (x_h)\, .  $$
The main idea (which extends an idea due to Ruelle)  is to  write   
$ Y_k (t, w) $    as a sum
 \begin{equation} \label{Deltam}
  Y_k (t, w)   =     \sum_{h \in {\cal H}} F_{h} ^{(k)} (x_h)   +  \sum_{m = 2}^k   \Delta_m, 
  \ \   \Delta_m:= 
 \sum_{h \in {\cal H}^m} F_{h}^{( k)}(x_h) - \sum_{h \in {\cal H}^{m-1}} F_{h} ^{(k)}(x_h). 
 \end{equation}
 We first study the first term
 ${T_k(t,w):=\sum_{h \in {\cal H}} F_{h} ^{(k)} (x_h)},$ then  each difference $\Delta_m$ contained in the second term.
 
  \smallskip
{\bf \em First term.} 
   In this case,   $F_{h}^{(k)}$  is written as 
   $ F_{h}^{(k)} :=  {\bf H}_{t,w}^{k-1}  [F_{h}^{(1)}] $. Then, we have 
   $$ |T_k(t,w)|\le \sum_{h \in {\cal H}} || F_{h}^{(k)}||_0 \le   || {\bf H}_{t,w}^{k-1}||_0 \sum_{h \in {\cal H}} ||F_{h}^{(1)}||_0 \, .$$ Using the relation between $0$-norm and Dolgopyat norm,  together with the  Dolgopyat bound
\eqref{Dolgo}, and  Inequality \eqref{Fhell2}, we obtain 
$$  |T_k(t,w)|\le \sum_{h \in {\cal H}} || F_{h}^{(k)}||_0||  <\!\! <  |\tau|^{\xi_ 5} \gamma_5^{k-1}  {\bf H}_{\sigma, \nu}[1](-1/4) <\!\! <   |\tau|^{\xi_ 5}\,  \gamma_5^{k} \, .$$

\medskip
 {\bf\em Second Term.}  
    
   We first prove the equality 
  \begin{equation} \label{hundh} 
    F_{b(h)}^{(k)}  =  \sum_{ h  \mid  h = b(h) \circ g}   F_{ h}^{(k)}  \qquad \hbox{for any $ b(h) \in {\cal H}^{m-1}$}  \, .
    \end{equation}
 The fundamental interval  ${\cal I}_{b(h)}$ relative to the LFT $b(h)$ is the disjoint union of the fundamental intervals $  {\cal I}_{b(h) \circ g} $ for $g \in {\cal H}$ and, using two times \eqref{Fhell1},  the sequence of equalities hold: 
 \begin{align*}F_{b(h)}^{(k)}  &= {\bf H}^k_{t, w} [{\bf 1}_{{\cal I}_h}] = \sum_{ g \in {\cal H} } {\bf H}^k_{t, w} [ {\bf 1}_{{\cal I}_{b(h) \circ g}}]
  =  \sum_{ g \in {\cal H} } \sum_{u \in {\cal H}^{k-m} }{\bf H}_{[b(h) \circ g \circ u],t, w}  [{\bf 1}] \\
& =  \sum_{u \in {\cal H}^{k-m}  } \, \sum_{g \in {\cal H} }  {\bf H}_{[b(h) \circ g\circ u],t, w}  [{\bf 1}]  
   =  \sum_{u \in {\cal H}^{k-m}  } \,    \sum_{ h \mid h =  b(h) \circ g} {\bf H}_{[ h \circ u],t, w}  [{\bf 1}] \\
& =   \sum_{ h  \mid  h =  b(h) \circ g}  \    \sum_{u \in {\cal H}^{k-m}  }  {\bf H}_{[ h \circ u],t, w}  [{\bf 1}] =  
  \sum_{ h  \mid  h =   b(h) \circ g}   { \bf H}^k_{t, w} [{\bf 1}_{{\cal I}_h}]  =   \sum_{ h  \mid  h =   b(h) \circ g}   F_{ h}^{(k)}.
  \end{align*}
 The equality \eqref{hundh} is proven. Now, as any $h \in {\cal H}^{m-1}$
 is  the beginning $b(h)$ of  some $h \in {\cal H}^m$,  it easily entails 
 the equality
 \begin{equation} \label{Deltamdif}
   \Delta_m =  \sum_{h \in {\cal H}^{m}} F_{ h}^{(k)} (x_h)  -  \sum_{ h \in {\cal H}^{m-1}} F_{h}^{(k)} (x_h) 
  = \sum_{ h \in {\cal H}^{m}}  \left[ F_{ h}^{(k)} (x_h)  - F_{h}^{(k)} (x_{b(h)})\right] \, .
  \end{equation}
 As  the two points $ x_h $ and $x_{b(h)}$ belong to the same fundamental interval ${\cal I}_{b(h)}$ of depth $m-1$,  whose length is less than $\rho^{m-1}$, 
   the difference $\Delta_m$ admits the bound 
  $$ |\Delta_m| \le   \rho^{m-1}  \cdot \left[ \sum_{h \in {\cal H}^m} ||F_{h}^{(k)}||_1 \right]\, .$$
   Using the expression   of the function $F_{h}^{(k)}$  obtained in \eqref{Fhell1}, 
 we are then led to  evaluate the norm $(1, 1)$ of each  function, for $h \in {\cal H}^m$  
 $$  F_{h}^{(k)} = {\bf H}^{k-m}_{t, w} \circ   {\bf H}_{[h], t,w}[{\bf 1}] \, .$$ Using the relation\footnote{In the paper \cite{PoSh}, the authors seem to have forgotten the factor $|\tau|$....} between the norm $(1, 1)$ and the norm $(1, \tau)$, together with    
 the Dolgopyat bound  \eqref{Dolgo} and Inequality \eqref{Fhell2}, this entails the following  bound for $\Delta_m$,  for any $\gamma_4 \ge \gamma_5$,    
\begin{align*} |\Delta_m| & < \!\!< |\tau|  \cdot  || {\bf H}_{t, w} ^{k-m}||_{(1, \tau)} \cdot   {\bf H}^{m}_{\sigma, \nu}[1](-1/4) \cdot  \rho^{m}\\
 &< \!\!<    |\tau |^{1+ \xi_5} \cdot  \gamma_4^{k-m}\, \cdot \left[  \rho^m {\bf H}^m_{\sigma, \nu}[1](-1/4)\right] \\ &< \!\!<   |\tau |^{1+ \xi_5} \,  \gamma_4 ^{k} \cdot \left[   \rho^m \cdot   {\bf H}^{m}_{\sigma, \nu}[1](-1/4)\cdot \gamma_4^{-m} \right]\, .
\end{align*}
   The bound \eqref{Hnbound1} leads to the inequality 
  $$ \rho ^m \cdot {\bf H}^{m}_{\sigma, \nu}[1](-1/4) < \!\!<  \rho ^m \cdot  \rho^{m(\sigma- 1-d|\nu|)}=  \rho^{m(\sigma-d|\nu|)}\, .$$
  The exponent satisfies $\sigma- d |\nu|  \ge 1- \delta_5 -d \nu_5$.  Then,  choosing $\gamma_4<1$ and $\gamma_4 \ge \gamma_5 $ entails the final bound
  $$    \sum_{m = 2}^k   |\Delta_m|    < \!\!<   k \,|\tau|^{ \xi_4} \cdot   \gamma_4^k\, , \qquad \hbox{with \quad  $\xi_4 = 1 + \xi_5$}\, .$$
  Now, we use   the bound
$ |Y_k(t, w) |\le  |T_k(t, w)| +  \sum_{m = 2}^k   |\Delta_m| $,  we change $\tau$ into $2 \tau$, and the series 
$Y_k(t, w)$ itself satisfies, for some constant $\widetilde M_4$,
{$$ |Y_k(t, w)|  < \!\!< \widetilde M_4\cdot    k  \cdot  |\tau|^{ \xi_4} \cdot   \gamma_4^k\, . $$  This entails  the final bound with 
$M_4:=  \tfrac {\widetilde M_4 }{\gamma_4}   \left(\tfrac 1 {1-\gamma_4}\right)^2  \, .$}
\end{proof}

\subsection{Conclusion of Section 3.   main properties of  the  series $P(s, w)$}\label{Sec:3P}
 We now adjust  various neighborhoods and bounds from    Proposition \ref{Pro:Z-P} in Section \ref{first-prop}, 
Proposition \ref{Pro:Z-E} in Section \ref{Sec:near},  Proposition \ref{Prop:mid} in  Section \ref{Sec:Middle} and finally Theorem \ref{Thm:Far} 
in Section \ref{Sec:Dolgo}.   
 
Each result  provides a domain ${\cal T}_i:= {\cal S}_i \times {\cal W}_i$: The index  $i $ equals $0$ for Proposition \ref{Pro:Z-P}, it equals $i = 2$   
for  Proposition \ref{Pro:Z-E}, it equals $i = 3$ for  Proposition \ref{Prop:mid}, and it equals $i = 4$ for  Theorem \ref{Thm:Far}. Each ${\cal W}_i$ is a neighborhood of $0$ and we  denote by  $  {\cal W}'$  the intersection of the four neighborhoods. Each ${\cal S}_i$ is a vertical strip  or a part of a vertical strip 
of the form $ \{ s \mid |\Re s -2 | < \delta_i \}$. We thus   let $ \delta:= \min (\delta_0, \delta_2, \delta_3, \delta_4)$ and consider the strip ${\cal S}:=  \{ s \mid |\Re s -2 | < \delta\}$.  Then, the horizontal lines $|\tau| = \tau_2$ from  Proposition \ref{Pro:Z-E}, and  $|\tau| = \tau_4$ from  Theorem \ref{Thm:Far} separate the strip ${\cal S}$ in three regions, where  the three results  of Proposition \ref{Pro:Z-P},  Proposition \ref{Prop:mid}, Theorem \ref{Thm:Far}  hold when $w$ belongs to ${\cal W}'$. 

We need a last adjustment  of the neighborhood ${\cal W}'$ to better deal with the rectangle ``near the real axis'', namely  $\{ s \mid |\Re s -2| < \delta, |\tau| \le \tau_2 \} $; we want indeed the set of  poles $\{ s_w \mid w \in  {\cal W}' \}$ to be not too close to the left vertical line $\Re s = 2-  \delta$. As the mapping  $\sigma$ is analytic on ${\cal W}' $, the derivative $|\sigma'(w)|$  is  bounded on $\overline {{\cal W}'}$ (say by some $B>0$). Then,  with the new (and final)  neighborhood ${\cal W}$,  defined as 
$$ {\cal W} :=  {\cal W}' \cap \left\{ |w| \le \frac { \delta}{4B} \right\}  \, , $$
the following holds, for $w \in {\cal W} $,  
$$  |\Re  s_w  - 2|   = 2 |\Re \sigma(w) -1| \le 2 |w| B\le  (1/2)\,  \delta, \qquad  |\Re  s_w - (2-\delta) | \ge (1/2)\,  \delta\, .  $$
 Then we let  $\tau_0 := \tau_4 >1$ and $\xi := \xi_4$. We  then obtain (after adjustment of the  various constants  $M_i$) the final result of this section, which  is clearer when using the following notations.

{\bf Notations.} A width $\delta>0$ and two horizontal lines  $ |\tau | = \tau_0$ (with $\tau_0 >1$)  divide the strip $\{s \mid  |\Re s -2| < \delta \}$ into three domains: 
a  bounded rectangle   ${\cal R}:=  \{ s \mid  |\Re s-2| <\delta,\,  |\tau| \le \tau_0\} $ and   the union    ${\cal U}:=  \{ s \mid  |\Re s-2| <\delta,\,  |\tau| \ge \tau_0\} $ of two  unbounded rectangles.

\begin{theorem} \label{Thm:L}
Consider  the
  bivariate  generating series  $P(s,w)$   associated with a digit-cost $c$   of moderate growth  and defined in \eqref{Eq:P}.   There   exists   
a neighborhood $\mathcal W$ of $w=0$,  a width $\delta>0$,    two horizontal lines  $ |\tau | = \tau_0$ (with $\tau_0>1$),   an exponent $\xi>0 $ and  a   bound $ M$, for  which  the following holds: 

\begin{itemize}

\item[$(a)$]  For  $w \in {\cal  W}$, the mapping $ s\mapsto   P(s, w)$  is meromorphic  on   the half plane  $\{ s \mid  \Re s > 2 -\delta \}$.

\item [$(b)$]
It   has a unique (simple) pole, located inside   the rectangle ${\cal R}$, at  the point $s_w = 2 \sigma(w)$  defined by the equation $\lambda(\sigma(w), w) = 1$, $\sigma(0)=1$. \\
The residue  $v(w)$ of $s \mapsto P(s, w)$  at $s =s_w$ equals $   -1/ \lambda'_t (\sigma(w), w)$. \\
The distance between $s_w$ and the vertical line $\Re s = 2 -\delta$ is at least $(1/2) \delta$. \\
   The   function  $v(s, w) :=  (1/s) (s- s_w) P(s, w) $ satisfies 
$| v(s, w)|  \le M$  on $\overline {\cal R}\times \overline {\cal W}$.

\smallskip
\item[$(c)$]       
The inequality $|P(s, w)| \le M   |\Im s|^\xi$ holds   on $\overline {\cal U}\times \overline {\cal W}$; 

\smallskip
\item[$(d)$] The inequality $|P(s, w)| \le M$   holds   on $\{ s \mid \Re s \ge 2 + \delta \} \times \overline {\cal W}$.

\end{itemize}
\end{theorem}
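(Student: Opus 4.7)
The plan is to assemble the four results of Sections 3.4, 3.7, 3.8 and 3.9 (Propositions~\ref{Pro:Z-P},~\ref{Pro:Z-E},~\ref{Prop:mid} and Theorem~\ref{Thm:Far}) into a single coherent statement about $P(s,w)$. The bridge between $P$ and the three results concerning $Z$ is the decomposition $P(s,w) = Z(s,w) - A(s,w)$ from Section~\ref{Sec:Z-P}. Since Proposition~\ref{Pro:Z-P}(b) ensures that $A(s,w)$ is analytic and uniformly bounded on the strip $\{|\Re s - 2| < \delta_0\} \times \overline{{\cal W}_0}$, subtracting $A$ preserves analyticity, preserves the location and residue of any pole of $Z$, and affects bounds only by an additive constant that can be absorbed into $M$.

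First I would fix the common width $\delta := \min(\delta_0, \delta_2, \delta_3, \delta_4)$ and the threshold $\tau_0 := \tau_4 > 1$. The strip $\{|\Re s - 2| < \delta\}$ then splits into the near-axis zone $\{|\tau| \le \tau_2\}$, the intermediary zone $\{\tau_2 \le |\tau| \le \tau_0\}$, and the unbounded zone $\mathcal{U}$. On the first, Proposition~\ref{Pro:Z-E} supplies the unique simple pole of $Z$ at $s_w = 2\sigma(w)$ with residue $-1/\lambda'_s(\sigma(w),w)$ and bounds $(s-s_w)Z(s,w)$; on the second, Proposition~\ref{Prop:mid} bounds $Z$; on $\mathcal{U}$, Theorem~\ref{Thm:Far} gives polynomial growth $|Z(s,w)| \le M_4 |\Im s|^{\xi_4}/(1-\gamma_4)$. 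Taking the preliminary neighborhood $\mathcal{W}' := \mathcal{W}_0 \cap \mathcal{W}_2 \cap \mathcal{W}_3 \cap \mathcal{W}_4$ of $w=0$ ensures that all three estimates hold uniformly in $w \in \mathcal{W}'$.

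To enforce that $s_w = 2\sigma(w)$ stays at distance at least $\delta/2$ from the line $\Re s = 2 - \delta$, I would use that $\sigma$ is analytic on $\mathcal{W}'$ with $\sigma(0) = 1$: setting $B := \sup_{\overline{\mathcal{W}'}} |\sigma'(w)|$ and then $\mathcal{W} := \mathcal{W}' \cap \{|w| \le \delta/(4B)\}$ forces $|\Re \sigma(w) - 1| \le \delta/4$, hence $|\Re s_w - 2| \le \delta/2$. Since the rectangle $\mathcal{R}$ is also bounded away from $s = 0$, the factor $1/s$ appearing in $v(s,w) := (1/s)(s-s_w)P(s,w)$ is bounded on $\overline{\mathcal{R}}$ and creates no difficulty.

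Transferring each estimate on $Z$ to the corresponding estimate on $P$ is then immediate via $|P(s,w)| \le |Z(s,w)| + |A(s,w)|$ (and analogously for $(s-s_w)$-weighted versions). After enlarging $M$ once to dominate the bound on $A$ from Proposition~\ref{Pro:Z-P}(b), the three estimates on $Z$ in the three zones, and the bound on $P$ in the right-hand region $\{\Re s > 2+\delta\}$ coming directly from Proposition~\ref{Pro:Z-P}(a), the four assertions (a)--(d) follow. I expect the only real obstacle to be purely organizational: coordinating the widths, heights and $w$-neighborhoods of four separate statements so that everything fits under a single $\delta$, a single $\mathcal{W}$ and a single $M$. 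There is no further analytic content beyond what has already been established.
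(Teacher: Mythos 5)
Your proposal is correct and follows essentially the same route as the paper's own Section 3.10: harmonize the domains of Propositions \ref{Pro:Z-P}, \ref{Pro:Z-E}, \ref{Prop:mid} and Theorem \ref{Thm:Far} (taking $\delta:=\min(\delta_0,\delta_2,\delta_3,\delta_4)$, $\tau_0:=\tau_4$, $\xi:=\xi_4$, intersecting the $w$-neighborhoods), shrink $\mathcal W$ via the bound on $|\sigma'(w)|$ so that $s_w$ stays at distance $\delta/2$ from $\Re s=2-\delta$, and pass from $Z$ to $P$ through the bounded analytic difference $A(s,w)$, absorbing all constants into one $M$. The only difference is that you spell out the $Z\to P$ transfer slightly more explicitly than the paper does; no gap.
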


\section{Extraction of coefficients and obtention of the Gaussian law.}

 The previous section provides a precise description of the analytic properties of the Dirichlet  bivariate generating function $P(s, w)$. We now   return to our probabilistic setting.  Section \ref{Sec:Moment} recalls  the role played by the moment generating function  $M_N(w)$ of cost $C$ on the subset ${\cal P}_N$.  Then, Section \ref{Sec:Landau} describes  asymptotic estimates of this moment generating function   that are  obtained  by ``extracting'' the coefficients of the  generating function  $P(s, w)$:   applying  a uniform version of the Landau Theorem leads to  uniform  estimates 
when $w$ belongs to the   neighborhood ${\cal W}$ defined in  Theorem \ref{Thm:L}. 
  With this uniform estimate of $M_N(w)$ at hands, Section \ref{Sec:QuasiP}  applies the Quasi-Powers Theorem, which  ``transfers'' this (uniform) asymptotic estimate into  an asymptotic Gaussian law, with the speed of convergence stated in Theorem \ref{Thm:mainCF}. The precise study of the constants of interest, performed in Section \ref{Sect:constants},  gives rise to Theorem \ref{Thm:mainconstants}.

\subsection{Using the moment generating function.} \label{Sec:Moment} 
In this paper, we  study  (when $N \to \infty$)  the asymptotic distribution   of the cost $C$ [restricted  to ${\cal P}_N$] via   the sequence of \emph{moment generating functions}
$M_N(w):=\mathbb{E}_N[\exp(wC)]$ where $w$ is   a  complex  number close to 0.
From its definition, the moment generating function $M_N(w)$ is  written as a quotient, namely, 
\begin{equation}\label{Eq:MGF}
 M_N(w) := \mathbb{E}_N[\exp(wC)]=\frac{S^{[C]}_w(N)}{S^{[C]}_0(N)} 
\end{equation}
and involves   the \emph{cumulative sum} $S^{[C]}_w(N)$  of the cost $\exp(wC)$  over ${\cal P}_N$,  
\begin{equation}\label{Eq:CS}
S^{[C]}_w(N)  :=\sum_{x \mid \epsilon(x)\le N }\exp(wC(x))\quad \hbox{  with }  \quad S^{[C]}_0(N) =|\mathcal{P}_N|\, .
\end{equation}

\subsection{Landau Theorem}\label{Sec:Landau}  The Landau Theorem  \cite{Lan} was proved by Landau  in 1924. This    is a  strong  tool 
which gives estimates on  the sum of coefficients of a Dirichlet series, provided the series satisfies  (not too) 
strong analytical properties.  This theorem is not so well-known, and many works (as \cite{BaVa} or \cite {PoSh})  
that use it, do not  deal with a {\em strong} version (that yet exists in the original work of Landau \cite{Lan}, but  is perhaps a little bit hidden).
These authors begin with a weak version and prove the  strong version by hand\footnote{In \cite{BaVa}, 
the authors use the {\em weak} version of the Landau Theorem,  and they have to introduce  
the so-called {\em smoothed} probabilistic model. With this {\em strong} version at hands, the part of their work may be changed and shortened.}.   What we mean by {\em weak} or {\em strong} is related  to the exponent $\xi$: the weak version  only deals   with $\xi  <1$, and very often (as it is the case here)  one  would need a {\em strong} version  which would deal with an exponent $\xi$ that may be larger than 1.  A  proof of the {\em strong} version is precisely  
given  in Roux's thesis  \cite{Ro} and   also available in \cite{BeLhVa16}.
We here  need a ``uniform'' version (with respect to parameter $w$) of this strong version: 

\begin{classicalThm}{Theorem}\label{ThmD}
  {[\rm Landau Theorem -- uniform version]} {\em   If  there is a complex neighborhood  ${\cal W}$ of 0 where the series  $P(s, w)$ satisfies the  ``uniform'' conclusions  of  Theorem \ref{Thm:L}, with an exponent $\xi>0$, and a width $\delta$,  the following holds for any   $N\ge 1$ 
$$ S_w^{[c]} (N) =  \frac{v(w)}{2\sigma(w)} \,  N^{2 \sigma(w)}\,  \left[ 1 + O(N^{-\beta})\right], \qquad  \beta := \frac \delta  {2 (\lfloor\xi\rfloor  +3)}\, , $$ 
where the constant in the $O$--term is uniform for $w \in {\cal W}$. } 
\end{classicalThm}

This leads to a first  result, easily deduced from Theorem \ref{Thm:L} and Theorem \ref{ThmD}:  

\begin{proposition} \label{Pro:Landau}
Consider    the set ${\cal P}$ of rqi numbers, and a cost $C$ on the set ${\cal P}$ associated with a digit-cost $c$   of moderate growth. Denote by $C_N$ the restriction of cost $C$ to the set ${\cal P}_N$ which gathers the rqi numbers $x$ with $\epsilon (x) \le N$. Then,  there exists a neighborhood ${\cal W}$  of 0, on which 
the moment generating function $M_N(w)$  admits  for any $N \ge 1$   a ``uniform quasi-powers'' form
\begin{equation} \label{Qp}
M_N(w)= 
\frac{v(w)}{ \sigma(w) \cdot v(0)}\,  N^{ 2 (\sigma(w) -1)}\, \left[ 1 + O(N^{-\beta})\right],
\end{equation}
which involves  the exponent $\beta$ defined in Theorem \ref{ThmD},   two  analytic  functions, the  function  $w\mapsto \sigma(w)$  defined implicitly by $\lambda(\sigma(w), w) = 1$, with $\sigma(0)=1$, and the residue function  $w \mapsto v(w)$, defined in Theorem \ref{Thm:L}. Moreover,  the constant in the $O$--term is uniform for $w \in {\cal W}$. 
\end{proposition}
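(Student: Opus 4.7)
The plan is to apply the uniform Landau Theorem (Theorem~\ref{ThmD}) directly to the bivariate series $P(s,w)$, taking as input the analytic properties established in Theorem~\ref{Thm:L}. The proposition is essentially a packaging result: once $S^{[C]}_w(N)$ has a single-power asymptotic expansion uniform in $w$, forming the quotient with $S^{[C]}_0(N)$ yields the claimed quasi-powers form.

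First I would verify that the hypotheses of Theorem~\ref{ThmD} are exactly what Theorem~\ref{Thm:L} supplies: a neighborhood $\mathcal{W}$ of $0$, a width $\delta>0$, horizontal lines $|\Im s|=\tau_0$, a polynomial-growth exponent $\xi$, and a bound $M$, together with a unique simple pole at $s=s_w=2\sigma(w)$ having residue $v(w)$ and bounded ``detached'' part $v(s,w)=(s-s_w)P(s,w)/s$. Plugging these into Theorem~\ref{ThmD} gives, uniformly for $w\in\mathcal{W}$,
\begin{equation*}
S^{[C]}_w(N) \;=\; \frac{v(w)}{2\sigma(w)}\,N^{2\sigma(w)}\,\bigl[1+O(N^{-\beta})\bigr],\qquad \beta=\frac{\delta}{2(\lfloor\xi\rfloor+3)}.
\end{equation*}

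Second, I would specialize this to $w=0$. Since $\sigma(0)=1$ and $\lambda'_s(1,0)=-\pi^2/(6\log 2)\neq 0$ by Proposition~\ref{B}, the residue $v(0)=-1/\lambda'_s(1,0)=6\log 2/\pi^2$ is a strictly positive constant, and the above specializes to
$S^{[C]}_0(N)=(v(0)/2)\,N^2\,[1+O(N^{-\beta})]$. By analyticity of $w\mapsto v(w)$ on $\mathcal{W}$ (itself a consequence of the analyticity of $\sigma$ and of $\lambda$ near $(1,0)$), after a possible further shrinking of $\mathcal{W}$ we may assume $|v(w)|\geq v(0)/2$ throughout $\mathcal{W}$, so that the denominator of $M_N(w)$ is bounded below by a positive constant times $N^2$ uniformly in $w$.

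Third, I would form the quotient defining $M_N(w)$ in \eqref{Eq:MGF}, obtaining
\begin{equation*}
M_N(w)\;=\;\frac{v(w)}{\sigma(w)\,v(0)}\;N^{2(\sigma(w)-1)}\;\frac{1+O(N^{-\beta})}{1+O(N^{-\beta})}.
\end{equation*}
The ratio of error factors is handled by the elementary identity $(1+\alpha)/(1+\beta)=1+(\alpha-\beta)/(1+\beta)$ together with the fact that the denominator error term $O(N^{-\beta})$ is a fixed (non-uniform) function of $N$ alone (it comes from the $w=0$ case), while the numerator error is uniform in $w$; hence the composed error remains $O(N^{-\beta})$ uniformly for $w\in\mathcal{W}$.

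The only subtle point, and what I would present as the main obstacle, is ensuring that the uniformity of the $O$-term in Theorem~\ref{ThmD} is preserved under this division: this requires that all constants used along the way (the lower bound on $|v(w)|$, the upper bound on $|\sigma(w)|$, the distance from $s_w$ to the line $\Re s=2-\delta$) depend only on $\mathcal{W}$ and not on individual $w$. All of these bounds are guaranteed by parts $(b)$ and $(c)$ of Theorem~\ref{Thm:L}, which were precisely designed with this uniformity in mind (in particular the adjustment $\mathcal{W}\subset\{|w|\le\delta/(4B)\}$ forces $\Re s_w$ to stay uniformly away from the line $\Re s=2-\delta$). Once this is checked, the proof is complete and yields the quasi-powers form \eqref{Qp}.
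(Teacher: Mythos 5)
Your proposal is correct and follows essentially the same route as the paper, which deduces Proposition~\ref{Pro:Landau} directly by applying the uniform Landau Theorem~\ref{ThmD} to the data supplied by Theorem~\ref{Thm:L} and then dividing the resulting estimate for $S^{[C]}_w(N)$ by its value at $w=0$. The extra checks you add (positivity of $v(0)$, the lower bound on $|v(w)|$, and the bookkeeping of uniform constants) are sound, if slightly more than needed, since the denominator $S^{[C]}_0(N)$ does not depend on $w$.
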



\subsection{Quasi-Powers Theorem}\label{Sec:QuasiP}
The following result,
known as the Quasi-Powers Theorem, due to Hwang \cite{Hw96},   shows that  a ``uniform quasi-powers'' expression for the moment generating functions $M_N(w) := E_N[\exp(w C_N)]$ entails an asymptotic Gaussian  law for cost $C_N$.  

\medskip

\begin{classicalThm}{Theorem}\label{E}
 {\rm [Quasi-Powers Theorem]} {\em  Consider  a sequence $ C_N$  of  variables   defined  on probability
spaces $( \mathcal{P}_{N}, \Pr_N)$, and their moment generating functions $M_N(w):=\E_N [\exp(w  C_N)] $.
Suppose that the
functions  $M_N(w)$
are  analytic on a complex
neighborhood  ${\mathcal W}$ of zero,  and each one satisfies there
\begin{equation*}\label{QP}
M_N(w) = \exp [\beta_{N}  U(w) +  V(w)]  \left (1 +
O(\kappa_{N}^{-1}) \right) \, ,
\end{equation*}
where the   $O$--term is uniform on ${\cal W}$. Moreover, the two sequences 
 $\beta_{N}$ and   $ \kappa_{N}$ tend to $\infty$ as $N \to \infty$,  and
$U(w)$,  $V(w)$ are  analytic  on ${\mathcal W }$. Then:  
\begin{itemize} 

\item [$(i)$] The mean and the variance satisfy
$$
 \E_N[ C_N]= \beta_N U'(0) + V'(0) + O(\kappa_N^{-1})\, , $$
 $$ \mathbb V_{N}[   C_N] = \beta_{N} U''(0) + V''(0) + O(\kappa_{N}^{-1}) \, .
$$

\item[$(ii)$] Moreover,  if  $U''(0) \not =
0$,  the distribution of $  C_N$ on $  \mathcal{P}_{N}$ is
asymptotically Gaussian, with speed of convergence $O (\alpha_N)$
 with $\alpha_N =   (\kappa_{N}^{-1} + \beta_{N}^{-1/2})$, 
$$ \Pr _N\left[ x  \mid \frac {C_N(x) - U'(0) \log N} { \sqrt {U''(0)\log N} } \le t \right]= \frac 1 { \sqrt {2\pi}} \int_{-\infty} ^t e^{-u^2 /2} du + O(\alpha_N)\, .$$
\end{itemize} }
\end{classicalThm}

\medskip
 Proposition \ref{Pro:Landau} shows that Theorem \ref{E} may be applied to our setting
with  
$$\beta_N =  \log N, \quad  \kappa_N =  N^{\beta} , \qquad \hbox{ and thus}\quad  \alpha_N = (\log N)^{-1/2}\, ,$$  together with  functions $U, V$ defined as 
 \begin{equation} \label{UV}
 U(w)=2(\sigma(w)-1), \quad \hbox{ and } \quad V(w)= \log \left(\frac{v(w)}{ \sigma(w) \cdot v(0)}\right) \, , 
 \end{equation}
  that  involve the two mappings, the function $w\mapsto  \sigma(w)$ and the  function $w\mapsto v(w)$   defined in 
 Proposition \ref{Pro:Landau}. The  next section  performs a deeper study of these functions near $w = 0$, and proves that the two functions  $U, V$ will be analytic on ${\cal W}$  with $U''(0) \not = 0$. This finishes the proof of our main Theorem \ref{Thm:mainCF}.   
 
 \subsection{Constants.}\label{Sect:constants}
 The function $U$ exactly coincides with its analog  which intervenes in the Gaussian law of the cost  $C$ on rational trajectories and  was studied in  \cite{BaVa}.  One has 
\begin{equation}
U(0) =0,  \qquad U'(0)=2\sigma'(0)=-2\frac{\lambda'_w(1,0)}{\lambda_t'(1,0)}\,.
 \end{equation} and  Baladi and Vall\'ee prove $U''(0) \not = 0$.

 With the expression of $v(w)$    given in  Proposition \ref{Pro:Landau}, one obtains 
 $$V(w) = \log \left(  \frac {\lambda_t'(1,0)}{\lambda_t'(\sigma(w),w)}\right) - \log \sigma(w) \, .$$
The first two derivatives  $V'(0), V''(0)$  then involve the  first  three derivatives of the mapping $(t, w) \mapsto \lambda(t, w)$ at $(t, w) = (1, 0)$.

\smallskip  
The present function $V$ {\em does not coincide} with its analog $V_r$ that is associated with rational trajectories and described in \cite{BaVa}. The function $V_r$  is  expressed in terms of the analog of the residual function $v_r$ as 
$$ V_r(w) =  \log \left(\frac{v_r(w)}{ \sigma(w) \cdot v_r(0)}\right),$$
$$v_r(w)  :=   \frac{-1}{\lambda_t'(\sigma(w),w)}  \frac   {1}{ \sigma(w)} {\bf F}_{\sigma(w) , w}\circ {\bf P}_{\sigma(w), w}[1](0) .$$
   This expression involves the projector  ${\bf P}_{t, w}$ on the dominant eigensubspace together with the ``final'' transfer operator ${\bf F}_{t, w}$ which  only deals  with digits $m \ge 2$, and is defined as
  $$ {\bf  F}_{t, w} [f](x) := {\bf H}_{t, w}[f](x) - \frac {e^{w c(1)}}  {(1+x)^{2t}} f\left( \frac 1 {1+x}\right) \, .$$
  The equality $
   {\bf  F}_{t, w} \circ {\bf P}_{t, w}  [1](0) = f_{t, w}(0) - e^{w c(1)} f_{t, w}(1)$ holds and  involves the dominant eigenfunction $f_{t, w}$. Finally, 
  $$   v_r( w) = \frac{ -1}{ \sigma(w) \, \lambda_t'(\sigma(w),w) } \left( f_{\sigma(w), w}(0) - e^{w c(1)} f_{\sigma(w), w}(1) \right) \, .$$

 Then Theorems \ref{Thm:mainCF} and \ref{Thm:mainconstants} are now  proven, and this ends the main part of our study.

\section{Possible extensions and open problems.}\label{Sect:extensions}

 This section  describes  possible (easy) extensions of the present work.  First,    Section \ref{Sec:levy}   studies  the probabilistic behavior of the ``cost'' $\log \epsilon (x)$. Then, the following two sections (\ref{Sec:bounded1} and \ref{Sec:bounded2}) are devoted to the ``constrained'' cases, where  the numbers have  bounded  digits (in their continued fraction expansion). Finally, Sections 
\ref{Sec:lattice}  and \ref{Sec:nonlattice}  discuss  possible obtention of local limit theorems,  and  study the speed of convergence towards the local limit law 

\subsection{The cost $C(x):= \log \epsilon(x)$}\label{Sec:levy}
Even though  the cost $x \mapsto \log \epsilon (x)$ is {\em not}  an additive cost,   it can be easily studied via our methods. The bivariate generating functions 
 (primitive or general)  involve the general term 
 
 \vskip 0.1cm 
 \centerline{$ e^{w \log \epsilon (x)} \cdot \epsilon(x)^{-s} =  \epsilon(x)^{-(s-w)}$}
and  exactly coincide with the (univariate) generating functions $P(s-w), Z(s-w)$. The  pole function $ \sigma$ is defined as $\sigma (w)  :=  1+ w$ and the residue function 
 $ v$ associates with $w$    the real $  -1/ \lambda'(1+w)$. 
 Then $U'(w) = 2$ for any $w$, and the  second derivative $U''(0)$ equals 0.  Then, Assertion $(i)$ of  Theorem \ref{E}  applies and shows that the variance is  very small,  of order $O(1)$, whereas the mean is of order $\log N$. This means that the cost $C:= \log \epsilon$ is very concentrated around its mean. Assertion $(ii)$ of Theorem \ref{E} does not apply, and there  does not exist a Gaussian limit law. Always with Assertion $(i)$ of Theorem \ref{E}, we obtain precise estimates of the expectation and  the variance  of the cost $x \mapsto \log \epsilon (x)$   on the set ${\cal P}_N$ which gathers the rqi numbers with size $\log \epsilon (x) \le N$. These estimates  involve the first derivatives of the dominant eigenvalue $t\mapsto  \lambda(t)$ at $t = 1$, 
 \begin{equation*}
  \E_N[C]=  2 \log N - \frac {\lambda''(1)}{\lambda'(1)} +O(N^{-\beta}), 
\end{equation*}
\begin{equation*}
{\mathbb V}_N[C]=  \frac { \lambda'''(1) \lambda'(1)  -\lambda''(1) ^2}{ \lambda'(1)^2}+O(N^{-\beta})\  \, .  
\end{equation*}
The dominant term of the mean value is well-known, but the  other terms of the previous estimates appear to be new, notably the  important fact that  the variance is of order $O(1)$.

\subsection{Quadratic irrational numbers with bounded digits (I)}\label{Sec:bounded1}
Here, a given bound $M \ge 2 $ is fixed and possibly equal to $\infty$.    This section is devoted to study  the ``constrained''  set ${\cal P}[M]$  which gathers  rqi numbers whose continued fraction expansion only contains digits  at most equal to $M$. The  analog set ${\cal R}[M]$ of reals whose continued fraction expansion  only contains digits  at most equal to $M$ is extensively studied,   and there are  many works that  deal with  its Hausdorff dimension $\sigma_M$. The  constrained  (unweighted) operator defined as  
\begin{equation} \label{HMs}
 {\bf H}_{M,t} [f] (x) :=  \sum_{m \le M} \frac {1}{(m+x)^{2t}}\,   f\left( \frac 1 {m+x}\right) 
 \end{equation}
directly appears  here, together  with its dominant eigenvalue $t \mapsto \lambda_M(t)$,  
as $\sigma_M$ coincides with the value $t$ for which $\lambda_M(t) = 1$.   The  strict inequality $\sigma_M<1$ holds  as soon as $M <\infty$. 
  
  It is  natural to study     the  rational numbers and the rqi numbers whose continued fractions expansion follows the same constraints, namely the sets  ${\cal Q}[M]:= {\cal Q} \cap {\cal R}[M]$  and ${\cal P}[M]:= {\cal P} \cap {\cal R}[M]$. The rational case was studied by Cusick and Hensley, then  Vall\'ee in \cite{Va98}  provided an unified framework based on analytic combinatorics,  where she analyses, for a fixed bound $M$, the probability  $\pi(N, M)$ that a rational  number  of denominator at most $N$   (or a quadratic number  $x$ with $\epsilon (x) \le N$) has all its digits at most equal to $M$.  In each case,  the  estimate   $$\pi(N, M) = N^{2(\sigma_M-1)} \left[ 1 + \eta_M(N)\right]  \qquad  \hbox{where $\eta_M(N) \to 0$ when  $N \to \infty$} \, ,  $$
is proven to  hold.    Moreover,  she also  studies the  mean value of the depth $p$ (in the rational case) or of the period length  $p$ (in the quadratic irrational  case), and proves   the asymptotic estimates, for any fixed $M$, 
  $$\E_N[p] = \mu_M \cdot  \log N\,  [1 + \eta_M(N)], \  \hbox{with} \  \mu_M :=  \frac {-2}{\lambda_M' (\sigma_M)}, \    \eta_M(N) \to 0 \, . $$
   We note that  the constant $\mu_M$ is the same in the two cases. 
    
  She  deals  in the two cases  with the  ``constrained'' analogs   $Q_M(s)$ and $P_M(s)$ of  univariate  ``unconstrained'' generating functions $Q(s)$ and $P(s)$, namely, 
  $$ Q_M(s) := \sum_{x \in {\cal Q}[M]}  q(x)^{-s}, \qquad P_M(s) :=   \sum_{x \in {\cal P}[M]}  \epsilon(x)^{-s} $$ that she relates to  the constrained  (plain) transfer operator   defined in \eqref{HMs}.
  Her results are based on spectral properties of this constrained operator on the half-plane $\Re s >\sigma_M$,  that are  further transfered to the coefficients of  univariate series $P_M(s), Q_M(s)$. As she was  (only) interested in average-case results, and  did not know (at the moment) the existence of bounds \`a la Dolgopyat, she dealt with Tauberian Theorems, which do not provide  estimates  for the remainder term $\eta_M(N)$ for $N \to \infty$.    
  
  We now  mix her previous approach  with our present methods, and  exhibit a general  ``constrained'' framework  which gathers the present  rqi framework   and the rational  framework of \cite{BaVa}. We introduce the bivariate constrained generating functions associated with some additive cost $C$, namely
  $$ Q_M(s, w) := \sum_{x \in {\cal Q}[M] } e^{w C(x)} \cdot q(x)^{-s}, \qquad   P_M(s, w) :=  \sum_{x \in {\cal P}[M] } e^{w C(x)} \cdot \epsilon(x)^{-s} \, ,$$
together with the weighted constrained operator  
   $${\bf H}_{M,t, w} [f] (x)= \sum_{m \le M} \frac {e^{wc(m)}}{(m+x)^{2t}}\,   f\left( \frac 1 {m+x}\right).$$
   We obtain (easily) the following result: 
 
   \begin{theorem}  With a bound $M \ge 2$, and   a cost  of moderate growth,  associate the additive cost $C$  on each set  ${\cal Q}[M]$ or ${\cal P}[M]$. 
Then,  as soon as $M \ge M_0$ (for some $M_0 \ge 3$),  and on  each of the two  sets,   

-- the set ${\cal Q}_N[M] $ of  rationals $ x \in {\cal Q}[M]$ with denominator $q(x) \le N$  

-- or  the set ${\cal P}_N[M]$ of   quadratic irrationals  $x\in {\cal P}[M]$ with $\epsilon(x) \le N$,  

the  distribution of  $C$     
is asymptotically Gaussian (for $N \to \infty$), and there are two constants   $\mu_M(c), \nu_M(c)$ for which 
$$\Pr _{N, M}\left[ x  \mid \frac {C(x) - \mu_M(c) \log N} { \sqrt {\nu_M(c)\log N} } \le v \right]= \frac 1 { \sqrt {2\pi}} \int_{-\infty} ^v e^{-\frac{u^2}{2}} du + O\left( \frac 1 {\sqrt {\log N}} \right).$$
\end{theorem}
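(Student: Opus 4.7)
The plan is to mirror the three-step scheme that produced Theorem 1, replacing the full transfer operator $\mathbf{H}_{s,w}$ by its constrained version $\mathbf{H}_{M,s,w}$ throughout. The target is to derive, for each of the two cumulative sums
\[
 S^{[C]}_{w,M,{\cal Q}}(N) = \!\!\!\!\sum_{x\in {\cal Q}_N[M]} \!\!\!\!e^{wC(x)},
 \qquad
 S^{[C]}_{w,M,{\cal P}}(N) = \!\!\!\!\sum_{x\in {\cal P}_N[M]}\!\!\!\! e^{wC(x)},
\]
an asymptotic expansion of Landau type that is uniform in a complex neighborhood $\mathcal W$ of $w=0$, then deduce the Gaussian law by the Quasi-Powers Theorem \ref{E} applied to the ratio $M_{N,M}(w) = S^{[C]}_{w,M,\bullet}(N)/S^{[C]}_{0,M,\bullet}(N)$.

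Since $\mathbf{H}_{M,s,w}$ is a \emph{finite} sum of component operators, Property $(P4)$ becomes trivial: $\mathbf{H}_{M,s,w}$ acts boundedly on $\mathcal A$ and on $\mathcal{C}^1(\mathcal I)$ for every $(s,w)\in\mathbb C^2$ with $w$ close enough to $0$ (moderate growth of $c$ just controls the exponential weights on a finite set of digits). The spectral analysis of Section 3.6 then carries over verbatim: there is a dominant simple eigenvalue $\lambda_M(s,w)$ with $\lambda_M(\sigma_M,0)=1$ and $\sigma_M\le 1$, an associated eigenfunction $f_{M,s,w}$, and an analytic implicit curve $w\mapsto \sigma_M(w)$ solving $\lambda_M(\sigma_M(w),w)=1$. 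In the rational case, the identity $Q_M(s,w)=(I-\mathbf H_{M,s,w})^{-1}\!\circ\mathbf F_{M,s,w}[1](0)$ (cf.\ \eqref{Qsw}) produces a unique simple pole of $s\mapsto Q_M(s,w)$ at $s=\sigma_M(w)$, with residue $v_M^{\cal Q}(w)$ expressed through $\lambda_M'$, $f_{M,\sigma_M(w),w}$ and $\mathbf F_M$. In the rqi case, one runs the chain $P_M\rightsquigarrow Z_M\rightsquigarrow Y_M$ of Section 2.4, deduces the trace identity \eqref{Eq:Yk} for the constrained operator, and obtains (Propositions \ref{Pro:Z-P}, \ref{Pro:Z-E}) a unique simple pole of $s\mapsto P_M(s,w)$ at $s=2\sigma_M(w)$, borne by the even quasi-inverse $\mathbf E_{M,s/2,w}$, with residue $v_M^{\cal P}(w)=-1/\lambda_M'(\sigma_M(w),w)$.

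The main obstacle, and the sole source of the threshold $M_0$, is the polynomial growth of the relevant series for $|\Im s|\to\infty$: this requires Dolgopyat--Baladi--Vall\'ee estimates (Theorem \ref{C}) for $\mathbf H_{M,s,w}$, which in turn rest on the UNI condition $(P5)$. UNI needs enough variability among inverse branches of a given depth to ensure the inf-bound $\Delta(h,k)\ge \rho^{an}$ on a suitable proportion of pairs; for $M\in\{2\}$ the set of admissible branches is too rigid, but a direct inspection of $\Psi_{h,k}'$ for pairs $h=h_a\circ h_{m_1}\cdots$, $k=h_b\circ h_{m_1}\cdots$ shows that from some explicit $M_0\ge 3$ onwards the UNI inequalities carry over from the full system (where they hold by $(P5)$) to the truncated system, with constants depending only on $M$. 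Granted UNI, Dolgopyat's original finite-branch argument applies (the weighted extension of \cite{BaVa} being even simpler here since only finitely many digits contribute), yielding bounds $\|\mathbf H_{M,s,w}^k\|_{(1,\tau)}\le M_5|\tau|^{\xi_5}\gamma_5^k$ on a vertical strip near $\Re s=\sigma_M$, hence polynomial growth of $Q_M(s,w)$ and $Z_M(s,w)$ for $|\Im s|\to\infty$. Combining the near-axis, intermediate and far regions reproduces, in each of the two frameworks, the analog of Theorem \ref{Thm:L}.

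The uniform Landau Theorem \ref{ThmD} then supplies, uniformly for $w\in\mathcal W$,
\[
 S^{[C]}_{w,M,\bullet}(N) \;=\; \frac{v_M^{\bullet}(w)}{s_{w,M}^{\bullet}}\,N^{s_{w,M}^{\bullet}}\bigl[1+O(N^{-\beta})\bigr],
\]
where $s_{w,M}^{\cal Q}=\sigma_M(w)$ and $s_{w,M}^{\cal P}=2\sigma_M(w)$. The ratio $M_{N,M}(w)$ therefore has the uniform quasi-powers form required by Theorem \ref{E}, with $\beta_N=\log N$, $\kappa_N=N^\beta$, and
\[
 U_M(w) = c_{\bullet}\bigl(\sigma_M(w)-\sigma_M\bigr),\qquad V_M(w)=\log\!\left(\frac{v_M^{\bullet}(w)\,s_{w,M}^{\bullet}(0)}{v_M^{\bullet}(0)\,s_{w,M}^{\bullet}}\right),
\]
with $c_{\cal Q}=1$, $c_{\cal P}=2$. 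It remains to check $U_M''(0)\ne 0$; implicit differentiation reduces this to a second-derivative identity for $\log\lambda_M$ at $(\sigma_M,0)$ which is exactly the strict convexity of the pressure function in the direction of $c$, a standard fact from thermodynamic formalism that holds for every non-coboundary cost of moderate growth, and which Baladi--Vall\'ee verify in the unconstrained case by the same computation. With $U_M''(0)>0$ in hand, Theorem \ref{E}(ii) delivers the asymptotic normality of $C$ on both ${\cal Q}_N[M]$ and ${\cal P}_N[M]$ with the stated speed $O((\log N)^{-1/2})$, and $\mu_M(c)=U_M'(0)$, $\nu_M(c)=U_M''(0)$ identify the two constants in the statement.
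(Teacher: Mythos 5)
Your overall architecture is the same as the paper's: constrain the operator, rerun the chain $P_M\rightsquigarrow Z_M\rightsquigarrow Y_M$ with the trace identity, get a simple pole at $s=\sigma_M(w)$ (rational case) or $s=2\sigma_M(w)$ (rqi case), then apply the uniform Landau Theorem \ref{ThmD} and the Quasi-Powers Theorem \ref{E}. Where you part company with the paper is the key analytic input, the behaviour of $Q_M(s,w)$ and $Z_M(s,w)$ for $|\Im s|\to\infty$, and this is where your argument has a genuine gap. You propose to verify UNI for the truncated system directly and then assert that ``Dolgopyat's original finite-branch argument applies, the weighted extension of \cite{BaVa} being even simpler here''. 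This is not routine: the constrained system with digits $m\le M$ is \emph{not} a complete interval map --- the images $h_m({\cal I})$, $m\le M$, do not form a topological partition of ${\cal I}$, the operator ${\bf H}_{M,1,0}$ is no longer a density transformer, and at the relevant abscissa $\Re s\approx\sigma_M<1$ the natural reference (conformal) measure is singular, supported on the Cantor set ${\cal R}[M]$. The Dolgopyat machinery as set up in \cite{BaVa} (and as quoted in Theorem \ref{C}) uses completeness and an invariant density bounded below in an essential way, so transplanting it to the truncated system is a substantive task, not a simplification; it is precisely the difficulty the paper avoids.

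The paper's route is perturbative: for $M\ge M_0$ the constrained operator ${\bf H}_{M,s,w}$ is treated as a small perturbation of the unconstrained ${\bf H}_{s,w}$ (the tail $\sum_{m>M}$ has small norm, as in \cite{CeVa}), and --- crucially --- the threshold $M_0$ is chosen so that $\sigma_M$ is close enough to $1$ that the vertical line $\Re s=\sigma_M$ lies \emph{inside the Dolgopyat strip of Theorem \ref{C}} for the unconstrained operator; the bounds then transfer to $(I-{\bf H}_{M,s,w})^{-1}$ and hence to $Q_M$ and $Z_M$. Your diagnosis of where $M_0$ comes from (UNI allegedly being ``too rigid'' for small $M$) therefore misidentifies the actual constraint: UNI-type variability is not the issue for small $M$; the issue is that $\sigma_2\approx 0.53$ is far from $1$, outside the strip where the unconstrained Dolgopyat estimates are available, whereas $\sigma_M\to 1$ as $M\to\infty$. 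To repair your proof you should either replace your Dolgopyat step by this perturbation argument, or supply a genuine Dolgopyat-type estimate for incomplete (Cantor-type) finite-branch systems, which is a result you would have to prove, not invoke. The remaining steps (pole structure near the axis, Landau, quasi-powers, $U_M''(0)\neq 0$ via strict convexity of the pressure, and the identification $\mu_M(c)=U_M'(0)$, $\nu_M(c)=U_M''(0)$ with $U_M(w)=c_\bullet(\sigma_M(w)-\sigma_M)$) are consistent with the paper's sketch.
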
   

\begin{proof}  [Sketch of the proof]  We adapt here the general description given in the introduction. Now the Dirichlet series $Q_M(s, w), P_M(s, w)$ have  both a  pole   on the curve $s = 2\sigma_M(w)$ where  $\sigma_M(w) $ is implicitly defined  by the equation  

\centerline{$\lambda_M(\sigma_M(w), w) = 1$, \quad  $\sigma_M(0)= \sigma_M$. }
 Moreover, if $M$ is large enough (say $M \ge M_0$),  the real number  $\sigma_M$ is close enough to 1, and the vertical line $\Re s = \sigma_M $ is contained in the Dolgopyat strip of Theorem \ref{C}. Then,  bounds \`a la Dolgopyat are available for the constrained transfer operator (which can be viewed as a perturbation of the unconstrained one, as  in \cite{CeVa}). This entails the polynomial growth of the quasi-inverse $(I-{\bf H}_{M, t, w})^{-1}$, and  thus the polynomial growth of the two  series $Q_M(s, w), P_M(s, w)$  for $|\Im s| \to \infty$ and $w$ close to 0.  The constants $\mu_M(c)$ and $\nu_M(c)$ are equal to the first and second derivatives of the function $w \mapsto 2 \sigma_M(w)$ at  $w= 0$. 
\end{proof}

  \subsection{Quadratic irrational numbers with bounded digits (II)}\label{Sec:bounded2} Now, the bound $M$ is no longer fixed, and  we wish  to  describe the behavior  of the
   probability $\pi(N, M)$ (both in the rational {\em and} in the  rqi case) when $N$ and $M$  both tend to $\infty$.  First, the   behavior of the Hausdorff dimension $\sigma_M$ of the set ${\cal R}[M]$ has been studied by Hensley in  \cite {He3}, who     exhibits an asymptotic estimate for the speed of convergence   of $\sigma_M$  towards 1, 
  $$  2(  \sigma_M -1) = - \frac {2}{\zeta(2)} \frac 1 M -   \frac {4}{\zeta(2)^2} \frac {\log M}{M^2} + O \left( \frac 1 {M^2}\right)  \qquad (M \to \infty)\, .$$
  
Then,  the authors of the present paper studied in \cite{CeVa} the probability $\pi(N, M)$ in the rational case. Besides some precisions and extensions of a  first result, previously obtained by Hensley in \cite{Hebd}, they  specially develop a methodology (based on  principles of analytic combinatorics) which can be easily transfered from the rational case to the rqi case and implies  the following general result: 

 \begin {theorem} There  are  
 an integer $M_0= M_0\ge 3$, and a real $\beta$, with $0< \beta <1/2$     
so that, for any $N \ge 1, M \ge M_0$,  the  two probabilities  $\pi (N, M)$ that 
\begin{itemize} 
\item [--] a  reduced quadratic irrational  $x$ with a size $\epsilon (x) \le N$
\item[--]   or  a rational number $x$ with  a denominator  $q(x) \le N$ 
\end{itemize}
\vskip -0.1cm 
  has all its digits less than $M$,  satisfy  
$$\pi(N, M)=   N^{2(\sigma_M-1)}  \left[ 1 +O(N^{-\alpha})\right] \cdot \left[1  + O\left( \frac {\log M}{M}\right)\right]  \, . $$
  Here,   $\sigma_M$  is the Hausdorff dimension of the set  ${\cal R}[M]$, and  $\beta$ is   the width of the Dolgopyat strip. 
 \end{theorem}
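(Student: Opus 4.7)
The plan is to write $\pi(N,M)$ as a ratio of cardinalities and apply the uniform Landau Theorem \ref{ThmD} to both, after showing that the constrained series $P_M(s)$ enjoys, uniformly in $M \ge M_0$, the same analytic properties established for $P(s,w)$ in Theorem \ref{Thm:L}. Concretely, I would  write $\pi(N,M) = |\mathcal{P}_N[M]|/|\mathcal{P}_N|$ in the rqi case (and analogously for rationals, using the cardinality of $\mathcal{Q}_N$ already asymptotic to a constant times $N^2$). The cardinality $|\mathcal{P}_N[M]|$ is the coefficient sum of the univariate Dirichlet series $P_M(s):=P_M(s,0)$, which is related, exactly as in Section 2.4, to the constrained transfer operator $\mathbf{H}_{M,s}$, whose dominant eigenvalue is $\lambda_M(s)$ with $\lambda_M(\sigma_M)=1$. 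The pole of $s\mapsto P_M(s)$ sits at $s_M = 2\sigma_M$ with residue $v_M$ proportional to $-1/\lambda_M'(\sigma_M)$.

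The crucial step is the uniformity of the analytic estimates in $M$. As soon as $M\ge M_0$, the real $\sigma_M$ is close enough to $1$ that the vertical line $\Re s = 2\sigma_M$ lies inside the Dolgopyat strip of Theorem \ref{C}. Viewing $\mathbf{H}_{M,s}$ as a perturbation of $\mathbf{H}_s$ (as in \cite{CeVa}), the Dolgopyat--Baladi--Vall\'ee bounds transfer to $\mathbf{H}_{M,s}$ with constants that are uniform in $M\ge M_0$. This yields the polynomial-growth bound on $P_M(s)$ for $|\Im s|\to\infty$ analogous to Theorem \ref{Thm:Far}, and the perturbation of the dominant spectrum gives the meromorphic picture near $s = 2\sigma_M$, all uniformly in $M$. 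Applying the uniform version of the Landau Theorem \ref{ThmD} then produces
\begin{equation*}
|\mathcal{P}_N[M]| = \frac{v_M}{2\sigma_M}\, N^{2\sigma_M}\bigl[1 + O(N^{-\alpha})\bigr],\qquad
|\mathcal{P}_N| = \frac{v_\infty}{2}\, N^{2}\bigl[1 + O(N^{-\alpha})\bigr],
\end{equation*}
with $O$-constants independent of $M\ge M_0$. Taking the ratio gives $\pi(N,M) = \bigl(v_M/(v_\infty\sigma_M)\bigr)\, N^{2(\sigma_M-1)}\,[1+O(N^{-\alpha})]$.

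It remains to estimate the factor $v_M/(v_\infty\sigma_M)$ as $M\to\infty$. Since $\mathbf{H}_{M,s}-\mathbf{H}_s$ has operator norm of order $1/M$ on $\mathcal{A}$, analytic perturbation theory for simple isolated eigenvalues gives $\sigma_M\to 1$ and $\lambda_M'(\sigma_M)\to \lambda'(1)$, and combining this with Hensley's sharp asymptotics $2(\sigma_M-1) = -(2/\zeta(2))(1/M) + O(\log M/M^2)$ together with a quantitative comparison of the dominant projectors yields
\begin{equation*}
\frac{v_M}{v_\infty\, \sigma_M} = 1 + O\!\left(\frac{\log M}{M}\right),
\end{equation*}
which gives the second bracket in the claimed estimate. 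The rational case is handled identically, using the series $Q_M(s)$ in place of $P_M(s)$ and the denominator $|\mathcal{Q}_N|$; in that setting the analysis closely follows \cite{CeVa}, while the rqi case merely substitutes the trace machinery of Section 3 (even quasi-inverse, $\alpha \leftrightarrow \epsilon$) for the direct quasi-inverse used in the rational setting.

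The main obstacle is establishing the uniformity in $M$ of the Dolgopyat-type bounds and of the residue comparison: one must check that the perturbation framework (both on $\mathcal{A}$, for the spectral decomposition near the pole, and on $\mathcal{C}^1(\mathcal{I})$, for the large-$|\Im s|$ regime) has constants that do not deteriorate as $M\to\infty$, which is precisely the technical content transferred from \cite{CeVa}. Once these uniform analytic inputs are in place, the Landau extraction and the elementary asymptotic simplification of $v_M/(v_\infty\sigma_M)$ are routine.
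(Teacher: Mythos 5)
Your proposal is correct and follows essentially the same route the paper takes: the paper itself only sketches this result, relating the constrained series $P_M(s)$ (and $Q_M(s)$) to the constrained transfer operator viewed as a perturbation of the unconstrained one, invoking uniform-in-$M$ Dolgopyat bounds and the Landau extraction, and deferring the quantitative comparison that produces the $1+O(\log M/M)$ factor to the methodology of \cite{CeVa}, transferred to the rqi case via the trace machinery of Section 3 exactly as you indicate.
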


This result exhibits  a threshold phenomenon, already obtained by Hensley and precisely described in \cite{CeVa}  in the rational case},  depending on the relative order of  $\sigma_{M}- 1$ (of order $O(1/M)$)    with respect to $n:=\log N$, 
\begin{itemize}  
 \item[$(a)$]  If $M/n  \to \infty$, then, almost  any  number of size at most $N$   has all its digits less than $M$.
\item[$(b)$]  If $M/n  \to 0$, then, almost any  number of size at most $N$ has at least one of its digits greater than $M$.
\end{itemize}

\subsection{Local limit laws - case of a lattice cost. } \label{Sec:lattice} 
For any additive cost  $C$ associated with a digit-cost $c$ of moderate growth,   Baladi and Vall\'ee  
also obtained in \cite{BaVa}  a local limit theorem on rational trajectories. Moreover, in the case when the cost $c$ is lattice, they prove an  optimal speed of convergence. We recall that a cost is {\em lattice} if it is non zero and there exists $L \in {\mathbb  R}^+$  so that $c/L$ is integer-valued.
The largest such $L$ is then called the span of $c$. For instance,  the three costs of interest are integer valued  and thus lattice (with span 1).

Local limit theorems  also deal with the moment generating function $ M_N(w) :=\E_N[\exp(wC)]$, but now with  a complex  number $w$ on the vertical line $\Re w = 0$.  When the cost $c$ is lattice with span $L$, it is enough to study the generating function $P(s, w)$ or its analog $Z(s, w)$ for 
a pair $(s, w)$  when $s$ belongs to a vertical strip near $\Re s = 2$ and $w \in ]-i \pi/L, + i \pi/L[$.   
 It is straightforward to adapt  the  proof  of \cite{BaVa} to the present framework and  derive the following result which     is expressed with the following scale, 
  \begin{equation} \label {scaleQ}
    {Q} (y, N):=   \mu(c) \log N + y \, \ \sqrt {\nu(c) \log N}, \qquad  (y \in  {\mathbb R})\, .
    \end{equation}
   
\begin{theorem}{ \rm [Local Limit Theorem for lattice costs.]} For any lattice digit-cost c  of moderate growth and of span $L$, letting $\mu(c) > 0$ and $\nu(c) >0$ be the constants from Theorems \ref{Thm:mainCF} and \ref{Thm:mainconstants}, the following holds  for the cost $C$ associated with $c$ on the set ${\cal P}_N$, 
$${\mathbb P}_N \left[  x \in {\cal P} \mid \left|C(x) -   Q(y, N)\right| \le  L/2   \right]
  = \frac 1 {\sqrt { \nu(c) \log N}}  \frac  {e^{-y^2/2 }}  {\sqrt 2 \pi}   + O \left( \frac 1 {\log N}\right)  $$
  with a $O$ uniform for $y \in {\mathbb   R}$.
\end{theorem}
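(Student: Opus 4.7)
The plan is to proceed by Fourier inversion on the lattice. Since $c/L$ is integer-valued, the cost $C$ also takes values in $L\mathbb{Z}$ and $w \mapsto M_N(w)$ is periodic with period $2\pi i/L$. The event $|C(x) - Q(y,N)| \le L/2$ singles out (generically) a unique multiple $k_N(y)L$ of $L$, and standard Fourier inversion on $\mathbb Z$ yields
\[
\mathbb{P}_N\left[|C(x) - Q(y,N)| \le L/2\right] = \frac{L}{2\pi} \int_{-\pi/L}^{\pi/L} e^{-it\, k_N(y)L}\, M_N(it)\,dt.
\]
The task is then to obtain an asymptotic of this Fourier integral with relative error $O(1/\sqrt{\log N})$.

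The second step is to split the interval $[-\pi/L, \pi/L]$ into a small neighborhood $\mathcal{U}_0 := \{|t| \le t_0\}$ of zero and its complement. On $\mathcal{U}_0$ the uniform quasi-powers representation of Proposition \ref{Pro:Landau} applies, giving
\[
M_N(it) = \exp\bigl[U(it)\log N + V(it)\bigr]\bigl(1 + O(N^{-\beta})\bigr),
\]
with $U'(0) = \mu(c)$ and $U''(0) = \nu(c) > 0$. A standard saddle-point expansion around $t = 0$, using the change of variable $t = u/\sqrt{\nu(c)\log N}$ together with the quadratic Taylor expansion of $U(it)$ and the choice $k_N(y)L \sim Q(y,N)$, produces the Gaussian density factor $e^{-y^2/2}/\sqrt{2\pi\,\nu(c)\log N}$, with an additional relative error of order $O(1/\log N)$ coming from the cubic term in $U$. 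The complementary region $t_0 \le |t| \le \pi/L$ must then be shown to contribute only $O(\rho^{\log N})$ for some $\rho < 1$, which is absorbed in the $O(1/\log N)$ error term.

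The main obstacle is the extension of the analytic framework of Section 3 to the imaginary-$w$ axis away from $w=0$. Two inputs are required: (a) a lattice version of Proposition \ref{D}, stating that the spectral radius of $\mathbf{H}_{s, it}$ on $\Re s = 1$ is strictly less than $1$ for $0 < |t| \le \pi/L$; this follows from the classical aperiodicity argument, since any equality $|\lambda(1, it)| = 1$ would force $c$ to be lattice with a span finer than $L$, contradicting its maximality; and (b) uniform Dolgopyat-type bounds for $\mathbf{H}_{s, w}$ on a strip around $\Re s = 1$ with $|\Im s|$ large and $w = it$ ranging over $[-\pi/L, \pi/L]$. Input (b) was established precisely in this form in \cite{BaVa} for the rational setting. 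The transfer from operator bounds to bounds on $P(s, it)$ via traces and quasi-inverses then proceeds exactly as in Sections 3.5--3.9 of the present paper, yielding the required uniform estimate $|M_N(it)| = O(\rho^{\log N})$ on the tail region. Combining the saddle-point contribution on $\mathcal{U}_0$ with the exponentially small tail contribution yields the stated local limit theorem with optimal speed $O(1/\log N)$.
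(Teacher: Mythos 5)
Your proposal is correct and follows essentially the same route the paper intends: the paper's own argument is a one-line instruction to adapt the local limit proof of \cite{BaVa}, and your sketch is precisely that adaptation --- Fourier inversion over the period $2\pi/L$, the uniform quasi-powers estimate of Proposition \ref{Pro:Landau} with a saddle-point expansion near $w=0$, aperiodicity giving spectral radius strictly less than $1$ for $\mathbf{H}_{1,it}$ when $0<|t|\le \pi/L$, the Dolgopyat--Baladi--Vall\'ee bounds for purely imaginary $w$ taken from \cite{BaVa}, and the trace/quasi-inverse transfer of Sections 3.5--3.9 combined with a pole-free Landau-type extraction to make $M_N(it)$ exponentially small (in $\log N$) on the tail region. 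Only minor points deserve adjustment: the cubic term yields a \emph{relative} error $O(1/\sqrt{\log N})$, hence the stated \emph{absolute} error $O(1/\log N)$; the aperiodicity argument should conclude that $|\lambda(1,it)|=1$ would make $c$ integer-valued after division by $2\pi/|t|\ge 2L$, contradicting the maximality of the span $L$ (a coarser lattice, not a finer one); and your inversion prefactor $L/(2\pi)$ actually produces a factor $L$ in the main term, which agrees with the statement as written only because the costs of interest have span $1$.
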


\subsection{Local limit laws - case of a  non lattice cost. } \label{Sec:nonlattice}
In \cite{BaVa}, the authors did not succeed to estimate the speed of convergence when the cost is non lattice:  this problem is known to be difficult, even  in the simpler case of  dynamical system with affine branches.  The problem was  later considered by Baladi and Hachemi,   then  by Vall\'ee in \cite{Va12}  (always  for rational trajectories) 
who obtained a local limit theorem  with a speed of convergence, that now depends on arithmetical properties of the cost $c$.   More precisely,  when  the cost satisfies a diophantine condition,  then the speed depends on the irrationality exponent that intervenes in the diophantine condition, as we now explain. 

We  first recall that a  real number $y$ is diophantine of exponent $\mu$ if there exists $C > 0$ such that the inequality $|x -  (p/q)| > Cq^{-(2+\mu)}$ is satisfied for any pair $(p, q) \in {\mathbb  N}^2$. We now state the diophantine condition on the cost $c$.   
 
  \begin{definition} For any pair $(h, k) \in {\cal H}^+$  let $c(h, k) := |h| c(k) - |k| c(h)$. The cost $c$ is diophantine of exponent $\mu$
if  there   exists a triple $(h, k, \ell)\in ({\cal H}^{+})^{ 3} $  for which  
 \begin{itemize}  
   \item[$(i)$] $c(h, k)$ and $c(h, \ell)$ are  non zero,  
   
  \item[$(iii)$]    
 the ratio $c(h, k)/c(h, \ell)$  is  diophantine of exponent $\mu$.
 \end{itemize} 
 \end{definition}
 
Then, Vall\'ee  in \cite{Va12} proves that the cost $C$ associated with a digit cost  $c$  (diophantine of exponent $\mu$)  and restricted to the rational numbers of denominator at most  $N$ asymptotically follows a local limit law, where the speed of convergence depends   of the exponent $\mu$.  

\medskip 
\begin{classicalThm}{Theorem}
 {\em 
    Consider  a cost $c$ of moderate growth,  diophantine of exponent  $\mu$, and  let   $\chi :=    6 (\mu +1) $.   Then,   the following holds  for the cost $C$ associated with $c$ on the set ${\cal P}_N$ : for any  $\epsilon $ with $\epsilon <1/\chi$, for any compact interval $J \subset {\mathbb R}$, there exists a constant $M_J$ so that,  for every $y \in {\mathbb R}$ and all integers $N \ge 1$, 
 $$ \left| \sqrt {\log N} \, \Pr_N[   C -{ Q}(y, N) \in J]  -  |J| \frac {e^{-y^2/2}}{   \sqrt {2\nu(c) \pi}}  \right| \le \frac {M_J}{{\log^\epsilon N}}\, .$$}
\end{classicalThm}

\medskip 
The proof  of \cite{Va12}  is based on fine diophantine arguments, and is  closely related to the generating function $Q(s, w)$ of the rational case: it is not clear if this result may be adapted to the generating function $P(s, w)$ of the rqi case.

\section {Annex: Nuclearity and traces of quasi-inverses.} \label{annex}

This annex is devoted to provide detailed proofs of  Propositions \ref{proptraces1} and \ref{Pro:Z-E}. 
The main object here is the trace of various transfer operators. In which sense the usual notion of trace of matrices may be extended to operators  that act on  Banach spaces was  mostly clarified by Grothendieck,  for instance,  in \cite{Gr1}.  Later on, Ruelle and Mayer applied Grothendieck's theory to the study of periodic points in dynamical systems as it is explained in Mayer's survey \cite{Magreen}.   

Most of the  classical studies in the dynamical system context indeed deal   with the Fredholm determinant $F(u)$ associated with a family of  operators ${\bf G}_t$,  
 which satisfies  the trace formula, 
    $$  F(u, t) :=  \det (I- u{\bf G}_t) =   \exp  \left[  - \sum_{k \ge 1}  \frac {u^k}  k\,  { \rm Tr \, } {\bf G}_t^k\right]\, ,$$
   and its logarithmic derivative $t \mapsto \eta(t)$,
   $$  \eta (t)  := \frac{\partial} {\partial u} \log F(u, t)\big|_{u = 1} = \sum_{k \ge 1}   { \rm Tr \, } {\bf G}_t^k \, . $$
 Here, in an approach which appears to be new,   we   directly deal with  this object, which  is proven  to  exactly coincide with the trace of the quasi-inverse $(I-{\bf G}_t)^{-1}$, without studying the Fredholm determinant.

\subsection {Basic facts.} 
\begin{definition} {\rm [Grothendieck]}\label{nuclear} Consider a Banach space ${\cal B}$ that admits a Schauder basis. 
  A linear operator $\mathbf{H}: {\cal B} \rightarrow {\cal B}$ 
  is  
  \emph{nuclear}  if it admits a  representation 
\begin{equation}\label{Eq:representation}
 \mathbf{H}[f]=\sum_{n\ge 1} \lambda_n\, f_n^\star[f] \, f_n
\end{equation}
where $\{f_n\}$ and $\{f_n^{\star}\}$ are families, respectively  in $\mathcal{B}$ and $\mathcal{B}^\star$,  with $\|f_n\|\le 1$ and $\|f_n^\star\|\le 1$, and $\lambda_n$ is a sequence of  complex numbers  that belongs to 
$\ell^1$. \\
The sum  $\sum_{n\ge 1}\lambda_n f_n^\star[f_n]$ does not depend on the choice of the  representation  given  in  \eqref{Eq:representation}, and defines the {\em functional trace} of the operator.\\  If $q := \inf\{p   \mid  (\lambda_n) \in \ell^p\}$ the operator ${\bf H}$ is said to be nuclear of order $q$.
\end{definition}

Remark that  such a nuclear operator  belongs to $\mathcal{L}(\mathcal{B})$ and is also compact.
Its {\em spectral trace} is defined  as the  sum of its eigenvalues counted according to their algebraic multiplicities, if it  exists.  The following  result  obtained  by Grothen\-dieck in  \cite{Gr1} and used in   Mayer's \cite{Magreen} and Momeni-Venkov's  works  \cite{MV}  exhibits important properties of the nuclear operators of order $q \le 2/3$:

\begin{classicalProp}{Theorem}\label{Gro1} {\rm [Grothendieck]}
 {\em The functional trace  of  a   nuclear operator  of order $q\le 2/3$    coincides  with its spectral trace. }
\end{classicalProp}

\subsection{Two useful lemmas.}

 The following two lemmas describe (sufficient) conditions  for dealing with  the trace of a series of operators.

  \begin{lemma}\label{thm-dream}  Consider a Banach space $(\mathcal{B},\|\phantom{x}\|)$ and  a sequence of  operators   ${\bf H}_{[j]}\in {\cal L}( {\cal B})$ for $j \in {\cal J}$.  Assume  there exists $q_0 >0$ for which the two following conditions hold: 
  
  \begin{itemize}  
\item[$(H1)$] Each  operator ${\bf H}_{[j]}$ is nuclear of order $q < q_0$, associated  with  a sequence of coefficients $ m \mapsto \lambda_{m, [j]}$; 
  
\item[  $(H2)$]   The series of general term $\sigma_j:= \sum _{m = 0}^\infty  |\lambda_{m, [j]}|^{q_0} $ is convergent. 

\end{itemize}
 
  Then, the following holds for the operator    $ {\bf H} := \sum_{j\in {\cal J} } {\bf H}_{[j]}$
    
  $(i)$  It  is   nuclear of order $q \le q_0$;  
  
  $(ii)$  If $q_0 \le 2/3$, the equality holds  between the  (spectral) traces,  \ 
   ${\rm Tr\, } {\bf H} = \sum_{j \in {\cal J}}  {\rm Tr\, } {\bf H}_{[j]}$. 
  \end{lemma}

\begin{proof} 
Each operator ${\bf H}_{[j]}$ admits a decomposition of type \eqref{Eq:representation}
 which involves a  sequence $m\mapsto \lambda_{m, [j]}$  that belongs to $\ell^{q_0}$ and   the inequalities $||f_{m, [j]}||  \le 1, \ \    ||f^\star _{m, [j]}|| \le 1$ hold.
Hypothesis     $(H2)$  entails that the sequence $(m, j) \mapsto  \lambda_{m, [j]}$ belongs to $\ell^{q_0}$. Then, the decomposition 
$$ {\bf H} [f]  =  \sum_{(m, j) \in {\mathbb N}\times {\cal J}}  \lambda_{m, [j]}\,   f_{m, [j]}\,   f^\star _{m, [j]} [f]$$
 (with $||f_{m, [j]}||  \le 1$ and $ ||f^\star _{m, [j]}|| \le 1 $)  
 entails that the operator ${\bf H}$ is  nuclear of order $q \le q_0$. 
 Moreover,   for $q_0 \le 2/3$,   the spectral traces satisfy $${\rm Tr\, } {\bf H} = \sum_{m, j} \lambda_{m, [j]}  \,  f^\star _{m, [j]} [f_{m, [j]}] = \sum_{j \in {\cal J} } \sum_{m = 0}^\infty  \lambda_{m, [j]} \,   f^\star _{m, [j]} [f_{m, [j]}]  = \sum_{j \in {\cal J}} {\rm Tr\, } {\bf H}_{[j]}\, .$$
\end{proof} 

The next lemma is useful for dealing with  operators   which involve quasi-inverses.  

 \begin{lemma}\label{useful} Consider an  operator ${\bf T}\in {\cal L}({\cal B})$ and a nuclear operator $\bf A$ of order $q \le 2/3$ 
 together with   its  sequence  $(\sigma_n)$ of coefficients  of  $\ell^1$ norm  equal to $\sigma$.    Then: 
 
 $(i)$  The  operator ${\bf AT}$ is nuclear  of order $q \le 2/3$
 and the inequality   holds: \begin{equation}\label{inequseful}
 |{\rm Tr\, } {\bf AT}  | \le \sigma \cdot \|{\bf T}\|\, .
\end{equation}

$(ii)$
 Assume  that the  spectral radius of $T$ is strictly less than 1. 
 Then, the operator $(I-{\bf T})^{-1}$ exists in ${\cal L}({\cal B})$,  the operator ${\bf A} (I- {\bf T})^{-1}$   is nuclear of order $q \le 2/3$ and the following equality holds  on the traces, 
 $$ {\rm Tr\, }   {\bf A} (I- {\bf T})^{-1} =  {\rm Tr\, }    \sum_{k = 1}^\infty {\bf A} {\bf T}^k   =   \sum_{k = 1}^\infty  {\rm Tr\, } {\bf A} {\bf T}^k\, .$$
\end{lemma}
\begin {proof}   Assertion $(i)$ is clear.  For $(ii)$: 
 The operator ${\bf A} (I- {\bf T})^{-1}$ is nuclear of order $q\le 2/3$.  
Now,  for two operators ${\bf V}$ and ${\bf U}$,   Eq. \eqref{inequseful} entails the bound 
$$ \left| {\rm Tr\, }  {\bf A}{\bf V} - {\rm Tr\, }  {\bf A}{\bf U}\right|   = \left| {\rm Tr\, }  {\bf A}({\bf V} -{\bf U}) \right|  \le \sigma \|{\bf V}- \bf {U}\|\ .$$
   Applying   the previous  relation, with  ${\bf V}_K := \sum_{k = 1}^ K {\bf T}^k$, ${\bf U} := (I-{\bf T})^{-1}$,   together with the bound
$||{\bf V}_K- \mathbf {U}||<\!\!<|| {\bf T}||^K$,   ends the proof.   
\end{proof}

\subsection{ Nuclearity and trace properties of the  transfer operator ${\bf H}_{t, w}$ acting on $\A$.}\label{Sec:Trace}

We now return to the present framework, where the Banach space $\A$,   defined in \eqref{calA},  admits   the Taylor basis $\{(z-1)^m,\, m\in \mathbb Z_{\ge 0}\}$ as a Schauder basis.

 The next result  summarizes the main properties of the various operators of interest, relative to their nuclearity and their traces.  

  \begin{proposition} \label{proptraces}
 $(a)$  For any $h \in {\cal H}^+$,   the weighted transfer operator $\mathbf{H}_{[h],t,w }$ is nuclear  of order  0. Its trace satisfies
 \begin{equation} \label{Eq:Trace-compo}
{\rm Tr\, }\mathbf{H}_{[h],t,w}= {\alpha(h)^{2t}}\frac{  \exp(wc(h))} {1-(-1)^{|h|} \alpha(h)^2}\, .
\end{equation}
(Recall that $|h|$ is the depth of $h$.) 
 
 $(b)$  Consider a pair $(s, w)$ in  the domain $\Gamma(a)$ defined   in \eqref{Eq:Q} with $a>3/4$.   Then, the  complete  operator ${\bf H}_{t, w}$ and its iterates   ${\bf H}^k_{t, w}$  (for $k \ge 1$) are  nuclear operators of order  $q \le  2/3$. One has, for $(s, w) \in \Gamma(a)$, 
\begin{equation} \label{tr3/4}
  {\rm Tr\, } {\bf H}^k_{t, w} = \sum_{ h \in {\cal H}^k} {\rm Tr \, } {\bf H}_{[h], t, w} = \sum_{h \in {\cal H}^k}  {\alpha(h)^{2t}}\frac{  \exp(wc(h))} {1-(-1)^k \alpha(h)^2}, \qquad \hbox{for $k \ge 1$} \, .  
  \end{equation}
 For any $k \ge 1$,  the mapping $(t, w) \mapsto  {\rm Tr\, } {\bf H}^k_{t, w} $ is analytic on $\Gamma(a)$.

 $(c)$ Consider a pair $(t, w)$ in  the domain $\Gamma(a)$  with $a>1$. Suppose, in addition, that  the spectral radius of ${\bf H}_{t,w}$ is less than 1. Then, 
 the quasi-inverse ${\bf H}_{t,w}(I-{\bf H}_{t,w})^{-1}$ and the even quasi-inverse ${\bf E}_{t,w} := {\bf H}^2_{t,w}(I-{\bf H}^2_{t,w})^{-1}$ are nuclear of order $q\le 2/3$. For these pairs $(t,w)$, the following holds,  
 \begin{equation} \label{trqi}
  {\rm Tr\, } {\bf H}_{t,w}(I-{\bf H}_{t,w})^{-1} = \sum_{ k\ge 1}{\rm Tr\, } {\bf H}^k_{t, w} 
  \quad \hbox{ and } \quad 
    {\rm Tr\, } {\bf E}_{t,w} = \sum_{\substack{k {\rm \, even}\\ { k\ge 2}}}{\rm Tr\, } {\bf H}^k_{t, w}\, .
  \end{equation}
Moreover, the mappings $(t,w)\mapsto {\rm Tr\, } {\bf H}_{t,w}(I-{\bf H}_{t,w})^{-1} $ and $(t,w)\mapsto {\rm Tr\, } {\bf E}_{t,w}$ are  analytic on $\Gamma(a)$ with $a>1$, and the following equalities hold 
\begin{align} \label{trqi2}
  {\rm Tr\, } {\bf H}_{t,w}(I-{\bf H}_{t,w})^{-1} &= \sum_{h\in \mathcal{H}^+}  {\alpha(h)^{2t}}\frac{  \exp(wc(h))} {1-(-1)^{|h|} \alpha(h)^2},\\ \nonumber
  {\rm Tr\, } {\bf E}_{t,w} &= \sum_{h\in \mathcal{H}^+,\ |h| {\rm \, even} }  {\alpha(h)^{2t}}\frac{  \exp(wc(h))} {1- \alpha(h)^2}.
  \end{align}

 \end{proposition}
 
 \begin{proof}

$(a)$  Mayer and Momeni-Venkov   have studied the nuclearity of the {\em unweighted} transfer operator.  We follow the approach  described    in Lemma 2.7 of  \cite{MV}, where  the authors    use the Schauder basis of $\A$, formed with the polynomials $(z-1)^m$ with $m \ge 0$. We adapt their method to the case of  the {\em weighted} operator
${\bf H}_{[h], t, w}$ for any $h \in {\cal H}^+$. 

\smallskip   Using the Taylor expansion of $f$  at $z = 1$ in the expression of  ${\bf H}_{[h], t, w} [f](z)$  provides a new expression
 $${\bf H}_{[h], t, w} [f](z) =e^{w c(h)}  \left[\sum_{m = 0} ^\infty  \frac {f^{(m)} (1)}{m!}  \left(h(z)-1\right)^m\right]  \und h^t(z) \, .$$
 that involves  the analytic extensions $\und h$ of $|h'|$. 
We consider here the case $r = 5/4$ in Property $(P3)$, 
 and the related  radii $r, \tilde r$.  For any $h \in {\cal H}^+$, and $t =  \sigma +i \tau$,    we introduce with $(P3)(ii)$,  the bound
$$ a_h(t):= \sup_{z \in {\cal V}} |\und h^{t}(z)| \le e^{\pi |\tau|}  |h' (-1/4)|^\sigma \, .$$
The  following objects  defined for $m \ge 0$ and $h \in {\cal H}^+$,
 $$f_{m, [h]} (z) =  \frac { \und h^t(z) } {a_h(t)}  \ \left( \frac {h(z)-1} {\tilde r}\right)^m , \quad f^\star _{m, [h]} [f] = r^m   \frac {f^{(m)} (1)}{m!} \, ,$$
 satisfy with $(P3)$  and  Cauchy estimates,   the two bounds $$||f_{m, [h]}||  \le 1, \quad   ||f^\star _{m, [h]}|| \le 1 \, , 
$$
 and the decomposition 
 $${\bf H}_{[h], t, w} [f] = \sum_{m= 0}^\infty  \lambda_{m, [h]}\,   f_{m, [h]}\,   f^\star _{m, [h]} [f], \  \hbox { with }\   \lambda_{m, [h]} := \left( \frac {r}{\tilde r}\right) ^{-m}  e^{w c(h)}  a_h(t)  $$
  holds.  As the sequence  $m \mapsto \lambda_{m, [h]}$ is geometric and thus  belongs to $\ell^p$
   for any $0<p \le 1$, this proves   the nuclearity of order 0. 
\smallskip    
   Each component operator $\mathbf{H}_{[h],t,w}$ is  a composition operator of the form $f \mapsto g \cdot f \circ h$  where $g := \exp (w c(h)) \cdot \und  h^{t}$. With Properties $(P3)$, it acts  on $\A$.   Moreover,  due to $(P3)(i)$,   any  branch $h\in {\cal H}^+$  maps
the domain $\mathcal{V}$ strictly inside itself. According to \cite[Lemma 7.10]{Magreen},  the spectrum of such a composition operator acting on $\mathcal{A}_\infty(\mathcal{V})$  is the set 
$$\{  \mu_n : =  g(x_h) \cdot  (h'(x_h))^n,\ \mid  n\in \mathbb{Z}_{\ge 0}\} \, , $$ 
that  involves  the unique fixed point   $x_h$ of $h$. Every eigenvalue is simple. \\
This result applies to  $\mathbf{H}_{[h],t,w}$ and provides  the  expression  given in \eqref{Eq:Trace-compo}  for the trace of ${\bf H}_{[h], t, w}$ 
  which involves  $\alpha(h)$ via \eqref{alpha}.

     $(b)$ 
       For any fixed $h \in {\cal H}^+$,  any $(t, w)$,   and any real $p>0$,   the sequence 
  $ m \mapsto \lambda_{m, [h]}$  satisfies, with Lemma   1, 
  $$\sum_{m \ge 0} |\lambda_{m, [h]}|^p \le    \left( \frac {\tilde r}{r}\right)^{pm}  |e^{\nu p c(h)}| \,    a_h(t)^p <\!\!<   \left( \frac {\tilde r}{r}\right)^{pm} {e^{\pi p |\tau| }}|h'(-1/4)|^{(\sigma-d|\nu|) p} \, .$$
  Consider now  a pair $(t, w)\in \Gamma (a)$ and an exponent $p$ for which  $a\, p:= (\sigma-d|\nu|) p > 1/2$. Then,  
  using  the bound \eqref{Hnbound1/2}  we  obtain, for any $(t, w) \in \Gamma(a)$
  $$ \sum_{(m, h) \in {\mathbb N} \times{\cal H^k}} |\lambda_{m, [h]}|^{p} \le \frac {e^{p \pi |\tau|}} {1- (\tilde r/r) ^{p} } {\bf H}^k_{ap}[1](-1/4)\, .$$
   
 Then,  for  $(t, w)\in \Gamma (a)$,  the operator ${\bf H}^k_{t, w}$ is nuclear of order $q  < 1/(2a)$ and is nuclear of order $q < 2/3$  as soon as 
  $(t,w)\in \Gamma(a)$ with $a> 3/4$. 
  Then,  for $a> 3/4$, we apply Lemma \ref{thm-dream} with $q_0 = 2/3$, and  get  
  \begin{equation}\label{Eq:sumTr}
{\rm Tr\, }{\bf H}_{t,w}^k=\sum_{h\in \mathcal{H}^k} {\rm Tr\, }{\bf H}_{[h], t,w}\, .
\end{equation}
The explicit expression  given in \eqref{Eq:Trace-compo} shows that the mapping $(t,w)\mapsto {\rm Tr\, }\mathbf{H}^k_{t,w}$ is analytic on $\Gamma(a)$.    

$(c)$  Eq. \eqref{trqi} is a direct application of Lemma \ref{useful} $(ii)$ with ${\bf A}=  {\bf T} =  {\bf H}_{t,w}$ or ${\bf A}= {\bf T} =  {\bf H}^2_{t,w}$. When $(t,w) \in \Gamma(a)$ with $a>1$ the spectral radii of ${\bf H}_{t,w}$ and of ${\bf H}^2_{t,w}$   are less than 1 (see Proposition \ref{AA}), hence Relation  \eqref{trqi} holds. On this domain $\Gamma(a)$,  the expression of   ${\rm Tr\, }{\bf H}_{t,w}^k$  given in \eqref{tr3/4} is the general term of a series which is absolutely convergent. This  thus entails 
the relations given  in \eqref{trqi2}.  
 
\end{proof}

\subsection{Spectral dominant properties of the  transfer operator ${\bf H}_{t, w}$ acting on $\A$.}\label{Sec:SpecOp}
For real  pairs $(t, w) \in \Gamma(a)$ (with $a>1/2$), the operator ${\bf H}_{t, w} $ satisfies  strong positive properties that  entail the existence of  dominant spectral objects, in the same vein as the Perron-Frobenius  properties. Also,  for $(t, w) \in \Gamma(a)$, (with $a>1/2$),  the map
 $(t,w)\mapsto 
 {\bf H}_{t, w}$
 is analytic in the sense of Kato \cite[Chapter 3]{Kato}. 
 As ${\bf H}_{t, w}$ is compact,  the dominant spectral objects defined for real pairs may be  extended (with analytic perturbation of the dominant part of the spectrum)  when the pair $(t, w)$ is close to a  real pair.

We are mainly interested in a neighborhood of the real pair $(1, 0)$, and  
 we begin with a complex neighborhood ${\cal T} =   {\cal S} \times {\cal W}$ of $(1, 0)$. For  $(t, w) \in {\cal T}$,    a spectral decomposition for the operator ${\mathbf H}_{t,w}$  holds and   there exist operators $\mathbf{P}_{t,w}$ and $\mathbf{N}_{t,w}$  for which  the operator ${\bf H}_{t, w}$ decomposes as
\begin{equation}\label{Eq:Sp}
 \mathbf{H}_{t,w}=\lambda(t,w)\mathbf{P}_{t,w}+\mathbf{N}_{t,w}
\end{equation}
with $\mathbf{P}_{t,w}\circ \mathbf{N}_{t,w}=\mathbf{N}_{t,w}\circ\mathbf{P}_{t,w}=0$.  As ${\bf H}_{t, w}$ is compact, and 
due to the  equality $\lambda(1, 0) = 1$, the spectral radius $R(t, w)$  of $\mathbf{N}_{t,w}$   is at most equal to $R<1$. 
 The operator $\mathbf{P}_{t,w}$ is rank one,  of the form  \begin{equation} \label {P}
 {\bf P}_{t, w} [f](x) =  f_{t, w}(x) \,  \mu_{t,w}[f] 
 \end{equation}  and involves the dominant eigenfunction $f_{t, w}$ of ${\bf H}_{t, w}$ and the eigenmeasure $\mu_{t, w}$ of the adjoint operator ${\mathbf H}^\star_{t,w}$. Moreover,  with analytic perturbation of  the dominant part of the spectrum,  all the spectral objects depend analytically on ${\cal T}$.  
 
 \smallskip
  The spectral decomposition   extends to the iterates of ${\bf H}^k_{t, w}$, 
\begin{equation}\label{Eq:SpHk} 
 \mathbf{H}_{t,w}^k=\lambda(t,w)^k \, \mathbf{P}_{t,w}+\mathbf{N}^k_{t,w} , \qquad \hbox{for any   $k\ge 1$} \, .
\end{equation}
As the  ``remainder quasi-inverse''    
${\bf N}_{t, w}(I-\mathbf{N}_{t,w})^ {-1}$ is analytic on ${\cal T}$, and   the quotient $ {\lambda(t, w)} / (1-\lambda(t, w))$ is meromorphic on ${\cal T}$, 
 the decomposition of the  quasi-inverse,  
\begin{equation}\label{Eq:qinv}(I-\mathbf{H}_{t,w})^ {-1}=\frac{\lambda(t, w)}{1-\lambda(t,w)} \mathbf{P}_{t,w}+(I-\mathbf{N}_{t,w})^ {-1} \, , \end{equation}
 proves that the quasi-inverse
$(I-\mathbf{H}_{t,w})^ {-1}$ is meromorphic on ${\cal T}$. There is an analog proof for the  ``even quasi-inverse''  ${\bf E}_{t, w}$  defined in \eqref{Eq:EO}, 
\begin{equation} \label{decE}
{\bf E}_{t, w} :=  {\bf H}_{t, w}^2 (I-{\bf H}^2_{t,w})^ {-1}=  \frac{\lambda^2(t, w)}{1-\lambda^2(t,w)} \mathbf{P}_{t,w}+ {\bf N}^2_{t, w}(I-\mathbf{N}^2_{t,w})^ {-1} \, .
\end{equation}

 \subsection {Spectral decomposition and traces.} \label{Sec:SpecTr}
The projector ${\bf P}_{t, w}$ is clearly nuclear  of order 0 (as any operator of finite rank), with a trace equal to 1.  Then,  as ${\cal T}$ is a subset of  $\Gamma(a)$  for some $a >3/4$, and due to  Proposition \ref{proptraces},  all the operators involved are nuclear of order $q < 2/3$.  Moreover, as the dominant eigenvalue $\lambda(t,w)$ and  the traces
${\rm Tr \, } {\bf H}^k_{t, w}$ are   well-defined and analytic, it is the same  for   the operators ${\bf N}^k_{t, w}$: their traces   are well-defined and analytic  on ${\cal T}$.  As    the norm of the operator ${\bf N}_{t, w}$ is strictly less  than 1 on ${\cal T}$,    Lemma \ref{useful}  applies  and entails   that the sequences   
$$
     \left[\sum_{k = 1}^K  {\rm Tr\, } {\bf H} ^{2k}_{t, w}\right] - \sum_{k = 1}^K  \lambda^{2k}(t, w)\, , $$ 
has  a  limit (for $K \to \infty$) that is analytic on ${\cal T}$.  As
$$  \lim_{k \to \infty}  \sum_{k = 1}^K  \lambda^{2k}(t, w) = \frac {\lambda^2(t, w)}  {1-\lambda^2(t, w)}$$ 
is meromorphic on ${\cal T}$, we have shown 
  the  equality 
$$  
 {\rm Tr \, } {\bf H}^2_{t, w}  (I- {\bf H}^2_{t, w})^{-1} = \sum_{k = 1}^\infty    {\rm Tr \, } {\bf H}^{2k}_{t, w}$$   that  defines  a meromorphic  function on ${\cal T}$. In particular, 
 \begin{equation} \label{dectrE}
{\rm Tr \, } {\bf E}_{t, w}= 
 \frac{\lambda^2(t, w)}{1-\lambda^2(t,w)} +  {\rm Tr \, }{\bf N}^2_{t, w}(I-\mathbf{N}^2_{t,w})^ {-1} \, .
\end{equation}
defines a meromorphic function  on ${\cal T}$, whose poles are  located on the curve $\{ (t, w) \in {\cal T} \mid \lambda^2(t, w) = 1\}$ that contains the point (1, 0) and thus the curve  $\{ (t, w) \in {\cal T} \mid \lambda(t, w) = 1\}$.

\subsection{Properties near  $(t, w) = (1, 0)$. } \label{Sec:SpecCo} The operator ${\bf H}_{1, 0}$ is a density transformer, and thus $\lambda(1, 0) = 1$. Furthermore, the dominant eigenfunction  $ f_{1, 0}$ coincides with the Gauss density  $\psi(x):=  (1/\log 2) \cdot 1/(1+x)$. 
 There are simple formulae for the two partial derivatives  of $\lambda(t,w)$ at $(1,0)$. They  are  easy consequences of the spectral equality  $\mathbf{H}_{t,w}[f_{t,w}]=\lambda(t,w)f_{t,w}$,  the fact that  ${\bf H}_{1, 0}$ is a density transformer, and the explicit forms of the derivatives of the operator. One obtains
  \begin{equation} 
  \label {calE} -\lambda_t'(1, 0) =   \int_{{\cal I}} \log | T'(x)|\,   \psi(x) \, dx= \frac{\pi^2}{6\log 2} \, , 
  \end{equation}
 \begin{equation} \label{Ec} 
   \lambda'_w(1,0) =  \sum_{h\in \mathcal{H}} c(h) \int_{h(\cal I)} \psi(x)\, dx  \, .
   \end{equation}
 The constant $- \lambda'_t(1,0) >0$ is  the Kolmogorov entropy (denoted here by ${\cal E}$) and $\lambda'_w(1,0)$ is the average  $\E[c]$ of the cost $c$ with respect to the Gauss density  $\psi$. They are both non zero.

\medskip Since  both derivatives are non zero at $(t, w) = (1, 0)$, there is a  complex neighborhood   ${\cal S}_1 $ of $t = 1$,  a neighborhood  $ {\cal W}_1$ of 0 
 and   a unique analytic 
function   $\sigma: {\cal W}_1 \rightarrow  {\mathbb C}$ 
for which 
$$ \{ (t, w) \in {\cal S}_1 \times  {\cal W}_1  \mid \lambda(t, w) = 1\} = \{ w \in  {\cal W}_1 \mid   \lambda(\sigma(w), w) = 1,\, \sigma(0)=1\}\, , $$  
\begin{equation}\label{Eq:sw} \qquad  \sigma'(w)=-\frac{\lambda'_w(\sigma(w),w)}{\lambda'_t(\sigma(w),w)}\ .\end{equation}  
First,  we  may choose  the neighborhood ${\cal S}_1$  to be a rectangle of the form ${\mathcal{S}_1}=  \{s=\sigma+ i\tau \mid  |\tau|<\tau_1 ,\, |\sigma-1| \le \delta_1  \} $. Second,  ``perturbating''  the inequality $\lambda'_t(1, 0)\not = 0$, and 
taking a  (possibly) smaller  neighborhood ${\cal T}_1 :={\cal S}_1 \times {\cal W}_1$,  exhibits a constant $A>0$ for which 
the ratio $|\lambda(t, w)-1|/|s- \sigma(w)| $  is at least $A>0$ on $\overline{\cal T}_1$ (and  its inverse is then bounded).  
  
Then, for each $w\in {\cal W}_1$, the map $t\mapsto  {\rm Tr\, } {\bf E}_{t, w}$ has  a   unique (simple)  pole on ${\cal S}_1$, located at $s=\sigma(w)$,   with a residue equal to
\begin{equation}
\label{Eq:ResE}\mbox{Res}[ {\rm Tr\, } {\bf E}_{t, w}; s = \sigma(w)]= \frac 1 2  \,\left ( \frac{-1}{\lambda'_t(\sigma(w),w)} \right)\, . \end{equation}

 \subsection*{Acknowledgements} We would like to thank Corinna Ulcigrai for her suggestion to  point out here 
the relation between continued fractions  and  the Farey tessellation of the hyperbolic plane 
 that we have described in  Section \ref{S1.2}. We  are also grateful to  Val\'erie Berth\'e,  Cristian Conde and Pablo Rotondo  for interesting discussions
about rqi numbers and traces of operators. 

{This work was partially funded by  the ANR projects Dyna3S (ANR-13-BS02-0033) and CoDys (ANR-18-CE40-0007),  the STIC-AmSud  Project AleaEnAmSud, and the French-Argentinian
Laboratories in Computer Science  INFINIS  (www.infinis.org) then SINFIN  (www.lia-sinfin.org).}
{Eda Cesaratto was partially supported by  the grant PIO
CONICET-UNGS 14420140100027. }


\begin{thebibliography}{10}
\normalsize
\baselineskip17pt


\bibitem{BaVa}
V. Baladi and B. Vall{\'e}e,
\newblock {\em Euclidean algorithms are {G}aussian},
\newblock {J. Number Theory} 110 (2005), 331--386.

\bibitem{BeLhVa16}
V. Berth{\'e}, L. Lhote, and B. Vall{\'e}e,
\newblock {\em Probabilistic Analyses of the Plain Multiple Gcd Algorithm},
\newblock {J. Symb. Comput.} 74(C) (2016), 425--474.

\bibitem{Bo07}
F. Boca,
\newblock {\em Products of matrices $\begin{bmatrix} 1 & 0 \\ 1 & 1 \\
  \end{bmatrix} \hbox{ and } \begin{bmatrix} 1 & 1 \\ 0 & 1 \\ \end{bmatrix}$
  and the distribution of reduced quadratic irrationals},
\newblock {J. Reine Angew. Math.} 606 (2007), 149--165.

\bibitem{CeVa}
E. Cesaratto and B. Vall{\'e}e,
\newblock {\em Small quotients in {E}uclidean algorithms},
\newblock {Ramanujan J.}  24 (2011), 183--218.

\bibitem{DFV}
H. Daud{\'e}, Ph. Flajolet, and B. Vall{\'e}e,
\newblock {\em An average-case analysis of the {G}aussian algorithm for lattice
  reduction},
\newblock {Comb. Probab. Comput.} 6 (1997), 397--433.

\bibitem{Do}
D. Dolgopyat,
\newblock {\em On decay of correlations in {A}nosov flows},
\newblock {Annals of Math.} 147(2) (1998), 357--390.

\bibitem{Fa}
Ch. Faivre,
\newblock {\em Distribution of {L}{\'e}vy constants for quadratic numbers},
\newblock {Acta Arith.} 61(1) (1992), 13--34.

\bibitem{FlSe}
Ph. Flajolet and R. Sedgewick,
\newblock {\em {Analytic Combinatorics}},
\newblock Cambridge University Press, Cambridge, 2009.

\bibitem{Gr1}
A. Grothendieck,
\newblock {\em Produits tensoriels topologiques et espaces nucl{\'e}aires},
\newblock {Mem. Am. Math. Soc.} 16, 1955.

\bibitem{Hebd}
D. Hensley,
\newblock {\em The largest digit in the Continued fraction expansion of a rational
  number,},
\newblock {\em Pac. J. Math.} 151(2) (1991), 237--255.

\bibitem{He3}
D. Hensley,
\newblock {\em Continued fraction {C}antor sets, {H}ausdorff dimension, and
  functional analysis},
\newblock {J. Number Theory} 40 (1992), 336--358.

\bibitem{Hw96}
H.-K. Hwang,
\newblock {\em Large deviations for combinatorial distributions. {I}. {C}entral
  limit theorems},
\newblock {Ann. Appl. Probab.} 6(1) (1996), 297--319.

\bibitem{Kato}
T. Kato,
\newblock {\em {Perturbation theory for linear operators}},
\newblock Springer-Verlag, Berling-Heidelberg, 1980.

\bibitem{Ke12}
D. Kelmer,
\newblock {\em Quadratic irrational and linking numbers of modular knots},
\newblock {  J. Mod. Dyn.} 6(4) (2012), 539--561.

\bibitem{Lan}
E. Landau,
\newblock {\em{\"U}ber die {A}nzahl der {G}itterpunkte in {G}ewissen {B}ereichen},
\newblock { IV. Nachr. Ges. Wiss. G{\"o}tt., Math.-Phys. Kl.} (1924),
  137--150.

\bibitem{Lh1}
L. Lhote,
\newblock {\em Computation of a class of continued fraction constants},
\newblock in: { {Proceedings of the Sixth Workshop on Algorithm Engineering and Experiments and the First Workshop on Analytic Algorithmics and Combinatorics}}, L. Arge et al. (eds), 2004,  199--210.

\bibitem{Ma}
D.~H. Mayer,
\newblock {\em A thermodynamic approach to {S}elberg's zeta function for $PSL(2,
  {\mathbb Z })$},
\newblock {Bull. Am. Math. Soc.} 25(1) (1991), 55--70.

\bibitem{Magreen}
D.~H. Mayer,
\newblock {\em Continued fractions and related transformations},
\newblock  in: { {Ergodic theory, symbolic dynamics, and hyperbolic spaces
  }}, {Oxford Sci. Publ.}, T. Bedford et al. (eds.),  Oxford Univ. Press,
  Oxford, 1991, 175--222.

\bibitem{MV}
A. Momeni and A. Venkov,
\newblock {\em {M}ayer's tranfer operator approach to {S}elberg's zeta function},
\newblock {St. Petersbg. Math. J. } 24(4) (2013), 529--553.

\bibitem{PaPo}
W. Parry and M. Pollicott,
\newblock {\em Zeta functions and the periodic orbit structure of hyperbolic
  dynamics},
\newblock {Ast{\'e}risque} 187-188, Soci\'et\'e Math\'ematique de France, Paris, 1990.

\bibitem{Po86}
M. Pollicott,
\newblock {\em Distribution of closed geodesics on the modular surface and
  quadratic irrationals},
\newblock {Bull. Soc. Math. Fr.} 114 (1986), 431--446.

\bibitem{PoSh}
M. Pollicott and R. Sharp,
\newblock {\em Exponential error terms for growth functions on negatively curved
  surfaces},
\newblock 
{  Am. J. Math.} 120 (1998), 1019--1042.

\bibitem{Ro}
M. Roux,
\newblock {\em {Th{\'e}orie de l'Information, s{\'e}ries de Dirichlet, et
  analyse d'algorithmes}},
\newblock PhD thesis of University of Caen, 2011.

\bibitem{Ru76}
D. Ruelle,
\newblock {Z}eta-functions for expanding maps and {A}nosov flows,
\newblock {\em Invent. Math.}  34 (1976), 231--242.

\bibitem{Se85}
C. Series,
\newblock {The modular surface and continued fractions},
\newblock {\em J. Lond. Math. Soc.} s2-31 (1985), 69--80.

\bibitem{Us13}
A. Ustinov,
\newblock {\em Spin chains and {A}rnold's problem on the {G}auss-{K}uz'min
  statistics for quadratic irrationals},
\newblock {Sb. Math.} 204 (2013), 762--779.

\bibitem{Va98}
B. Vall{\'e}e,
\newblock {\em Dynamique des fractions continues {\`a} contraintes
  p{\'e}riodiques},
\newblock {J. Number Theory}  2 (1998), 183--235.

\bibitem{Va00}
B. Vall{\'e}e,
\newblock {\em Digits and continuants in {E}uclidean algorithms. Ergodic versus
  Tauberian Theorems},
\newblock {J. Théor. Nombres Bordx.} 12 (2000), 531--570.

\bibitem{Va8}
B. Vall{\'e}e,
\newblock {\em Euclidean dynamics},
\newblock {Discrete Contin. Dyn. Syst.} 1(15) (2006), 281--352.

\bibitem{Va12}
B. Vall{\'e}e,
\newblock {\em The {E}uclid algorithm is totally {G}aussian}, in: Analysis of Algorithms (USA, 2004), N. Broutin et al. (eds.)
\newblock  Discret. Math. Theor. Comput. Sci. {AQ}, Nancy, 2012, 282--302.


 
\end{thebibliography}
\end{document}